\newcommand{\Z}{{\mathbb Z}}
\theoremstyle{plain}
\numberwithin{equation}{section}
\newtheorem{thm}{Theorem}[section]
\newtheorem{theorem}[thm]{Theorem}
\newtheorem{lemma}[thm]{Lemma}
\newtheorem{corollary}[thm]{Corollary}
\newtheorem{example}[thm]{Example}
\newtheorem{definition}[thm]{Definition}
\newtheorem{remark}[thm]{Remark}
\renewcommand{\labelenumi}{\theenumi}
\renewcommand{\theenumi}{(\alph{enumi})}
\newcommand{\Fix}{{\mathrm{Fix}}}
\newcolumntype{R}{>{\raggedleft\arraybackslash$}r<{$}}
\newcolumntype{C}{>{\centering\arraybackslash$}c<{$}}
\newcolumntype{L}{>{\raggedright\arraybackslash$}l<{$}}
\begin{document}

\title{Fibonacci Cycles and Fixed Points}
\author{Walter A. Kehowski}
\address{Department of Mathematics and Computer Science\\
                Glendale Community College\\
                Glendale, AZ 85308}
\email{walter.kehowski@gccaz.edu}
\thanks{}

\begin{abstract}
Let $S_b(n)$ denote the sum of the squares of the digits of the positive integer $n$ in base $b\geq2$. It is well-known that the sequence of iterates of $S_b(n)$ terminates in a fixed point or enters a cycle. Let $N=2n-1$, $n\geq2$. It is shown that if $b=F_{N+1}$, then a cycle of $S_b$ exists with initial term $F_{N}=F_{0}.F_{N}$, and terminal element $F_{n}.F_{n-1}$ if $n$ is even, or terminal element $F_{n-1}.F_{n}$ if $n$ is odd. Similarly, Let $N=2n+1$, $n\geq1$. If $b=F_{N-1}$, then a cycle of $S_b$ exists with initial term $F_{N}=F_{2}.F_{N-2}$, and terminal element $F_{n}.F_{n+1}$ if $n$ is even, or terminal element $F_{n+1}.F_{n}$ if $n$ is odd. Furthermore, the cycles also admit extension as an arithmetic sequence of cycles of $S_b$ with base $b=F_{N+1}+F_{N+2}k$ and $b=F_{N-1}+F_{N-2}k$, respectively. Some fixed points of $S_b$ with $b$ a Fibonacci base are shown to exist. Lastly, both cycles and fixed points admit further generalization to Pell polynomials.
\end{abstract}

\maketitle

\section{Introduction}\label{s:intro}

Let $b\geq2$ be any number base and let $S_b(n)$ denote the sum of squares of the digits of the positive integer $n$ in base $b$. It is known that the iterates of $S_b$ on any positive integer eventually enter a cycle or terminate in a fixed point. It is the purpose  of this paper to demonstrate the existence of certain cycles and fixed points of $S_b$, where $b=F_{2n}$, and $F_n$ refers to the Fibonacci sequence: $F_0=0$, $F_1=1$, $F_n=F_{n-1}+F_{n-2}$, $n\geq2$. See Section \ref{s:fibonacci}. Digits in base $b$ are separated by periods, for example, $x.y|_b=xb+y$. The $|_b$ will be omitted if the base $b$ is understood. We will say ``$x$ is fixed in base $b$'' to mean that $x$ is a fixed point for $S_b$, that is, $S_b(x)=x$. Once an element of a cycle is designated as an initial element $x$, the terminal element of the cycle will that element $z$ such that $S_b(z)=x$.

\subsection{Fundamental cycles}

\begin{example}[Fundamental cycle of type I]
A cycle under $S_b$ starting with $F_{N}=0.F_{N}$ and $b=F_{N+1}$, $N=2n-1$, $n\geq2$, is called a \emph{fundamental cycle of type I}. The elements of the cycle follow in general from identity \eqref{e:fibplus}, namely,
$$
 F_i^2+F_{N-i}^2 = F_{N-(2j+1)}F_{N+1} + F_{2j+1},
$$
where $j=\min(i,N-i)$ and $0 \leq i \leq n-1$. Furthermore, the cycle ends with $F_{n}.F_{n-1}$ if $n$ is even, and ends with $F_{n-1}.F_{n}$ if $n$ is odd. See Section \ref{s:fibonacci-i}.  For example, suppose the initial term is $F_{11}=89$, with $b=F_{12}=144$. See Table \ref{tab:fibcycle-example-type-I}. Observe that the indices of each element sum to $N=11$. The terminal element $F_{6}.F_{5}$ of the cycle follows from Lucas's identity \eqref{eqn:lucas}.
\begin{center}
\setlength{\extrarowheight}{4pt}
\begin{table}
\begin{tabular}{*{19}{@{\hspace{2pt}}R@{\hspace{2pt}}}}\toprule
 & & & & & & F_{11} &=& 89 &=& 0\cdot144 &+& 89 &=& F_{0}F_{12} &+& F_{11} &=& F_{0}.F_{11} \\
 F_0^2 &+& F_{11}^2 &=& 0^2 &+& 89^2 &=& 7921 &=& 55\cdot144 &+& 1 &=& F_{10}F_{12} &+& F_{1} &=& F_{10}.F_{1} \\
 F_{10}^2 &+& F_{1}^2 &=& 55^2 &+& 1^2 &=& 3026 &=& 21\cdot144 &+& 2 &=& F_{8}F_{12} &+& F_{3} &=& F_{8}.F_{3} \\
 F_{8}^2 &+& F_{3}^2 &=& 21^2 &+& 2^2 &=& 445 &=& 3\cdot144 &+& 13 &=& F_{4}F_{12} &+& F_{7} &=& F_{4}.F_{7} \\
 F_{4}^2 &+& F_{7}^2 &=& 3^2 &+& 13^2 &=& 178 &=& 1\cdot144 &+& 34 &=& F_{2}F_{12} &+& F_{9} &=& F_{2}.F_{9} \\
 F_{2}^2 &+& F_{9}^2 &=& 1^2 &+& 34^2 &=& 1157 &=& 8\cdot144 &+& 5 &=& F_{6}F_{12} &+& F_{5} &=& F_{6}.F_{5} \\
 F_{6}^2 &+& F_{5}^2 &=& 8^2 &+& 5^2 &=& 89 &=& \multicolumn{9}{L}{F_{11}.}\\\bottomrule
\end{tabular}
\caption{\label{tab:fibcycle-example-type-I} Computation of the fundamental cycle of type I with initial element $F_{11}=0.F_{11}|_b$ and base $b=F_{12}$.}
\end{table}
\end{center}
\end{example}

\begin{example}[Fundamental cycle of type II]
A cycle under $S_b$ starting with $F_{N}$ and $b=F_{N-1}$, $N=2n+1$, $n\geq2$, is called a \emph{fundamental cycle of type II}. The elements of the cycle follow in general from identity \eqref{e:fibminus}, namely,
$$
 F_i^2+F_{N-i}^2 = F_{N-(2j-1)}F_{N-1} + F_{2j-1},
$$
where $j=\min(i,N-i)$ and $1 \leq i \leq n+1$. Furthermore, the cycle ends with $F_{n}.F_{n+1}|_b$ if $n$ is even, and ends with $F_{n+1}.F_{n}|_b$ if $n$ is odd. See Section \ref{s:fibonacci-ii}. For example, suppose the initial term is $F_{13}=233$, with $b=F_{12}=144$. A computation similar to that of Table \ref{tab:fibcycle-example-type-I} shows that the cycle is
$$
 F_{13}=F_{2}.F_{11}|_b,\ F_{10}.F_{3}|_b,\ F_{8}.F_{5}|_b,\ F_{4}.F_{9}|_b,\ F_{6}.F_{7}|_b.
$$
Observe that the indices of each term sum to $N=13$. The terminal element $F_{6}.F_{7}|_b$ of the cycle follows from Lucas's identity \eqref{eqn:lucas}.
\end{example}

\subsection{Fixed points}\label{intro-fixed}

Fixed points occur in base $b=F_{6n-2}$, with fixed point $F_{2n}.F_{4n-1}|_b$, and  in base $b=F_{6n+2}$, with fixed point $F_{2n}.F_{4n+1}|_b$. They are isolated fixed points, that is, they have no preimage under their respective $S_b$. See Section \ref{s:fixedpoints}.

\subsection{Arithmetic sequences of cycles}\label{intro-arith}

The cycles of type I admit an extension to an arithmetic sequence of cycles. Namely, if $b=F_{N+1}+F_{N+2}k$, $N=2n-1$, $n\geq2$, there exists a cycle with initial term $F_{0}+F_{1}k.F_{N}+F_{N+1}k|_b$, and terminal element $F_{n}+F_{n+1}k.F_{n+1}+F_{n+2}k|_b$ if $n$ is even, and terminal element $F_{n+1}+F_{n+2}k.F_{n}+F_{n+1}k|_b$ if $n$ is odd. See Section \ref{s:arithFibonacci-i}.

The cycles of type II also admit an extension to an arithmetic sequence of cycles. Namely, if $b=F_{N-1}+F_{N-2}k$, $N=2n+1$, $n\geq2$, there exists a cycle with initial term $F_{0}+F_{1}k.F_{N}+F_{N+1}k|_b$, and terminal element $F_{n}+F_{n+1}k.F_{n+1}+F_{n+2}k|_b$ if $n$ is even, and terminal element $F_{n+1}+F_{n+2}k.F_{n}+F_{n+1}k|_b$ if $n$ is odd. See Section \ref{s:arithFibonacci-ii}.

\subsection{Pell cycles and fixed points}

The results mentioned in Subsections \ref{intro-fixed} and \ref{intro-arith} can be generalized to Pell polynomials. See Section \ref{s:pell}.

\begin{theorem}[Theorem \ref{thm:pell-fixed-points}]
 Let $n$ be a positive integer.
\begin{enumerate}
\renewcommand{\labelenumi}{\theenumi}
\renewcommand{\theenumi}{(\alph{enumi})}

  \item The polynomial $p_{2n}(x).p_{4n-1}(x)|_b$ is a fixed point of $S_b$, where $b=p_{6n-2}(x)$.
  \item The polynomial $p_{2n}(x).p_{4n+1}(x)|_b$ is a fixed point of $S_b$, where $b=p_{6n+2}(x)$.
  \item The polynomial $p_{2n}(x)p_{2n-1}(x).p_{2n+1}(x)p_{2n-1}(x)|_b$ is a fixed point of $S_b$, where $b=p_{4n}(x)$.

\end{enumerate}
\end{theorem}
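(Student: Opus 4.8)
The plan is to reduce each part to two routine verifications. Since $S_b(d_1 b + d_2) = d_1^2 + d_2^2$, a two-digit numeral $d_1.d_2|_b$ is a fixed point of $S_b$ exactly when (i) the numeral is legitimate, i.e.\ $\deg d_1 < \deg b$ and $\deg d_2 < \deg b$ (equivalently, on evaluating at any integer $x \ge 1$ one gets genuine base-$b$ digits, which holds because the Pell polynomials have nonnegative coefficients), and (ii) the polynomial identity $d_1^2 + d_2^2 = d_1 b + d_2$ holds. Since $\deg p_m = m-1$, part (i) is immediate: in (a) one needs $4n-2 < 6n-3$ and $2n-1 < 6n-3$; in (b), $4n < 6n+1$ and $2n-1 < 6n+1$; and in (c), $\deg(p_{2n}p_{2n-1}) = 4n-3$ and $\deg(p_{2n+1}p_{2n-1}) = 4n-2$, both $< 4n-1 = \deg p_{4n}$. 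So the whole content lies in the three identities of (ii).

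For (ii) I would compute with the Binet form. Let $\alpha,\beta$ be the roots of the quadratic defining the Pell recurrence, so that $\alpha\beta = -1$ (whence $1+\alpha\beta = 0$) and $p_n = (\alpha^n - \beta^n)/(\alpha-\beta)$; write $q_n = \alpha^n + \beta^n$ for the companion (Pell--Lucas) polynomials and $\Delta = (\alpha-\beta)^2$. One line of algebra in $\alpha,\beta$ gives each of the tools I need: the square-sum $p_m^2 + p_{m+1}^2 = p_{2m+1}$; the Cassini relation $p_{m-1}p_{m+1} - p_m^2 = (-1)^m$; the bridge $q_n = p_{n+1} + p_{n-1}$ together with $q_{m+1} + q_{m-1} = \Delta\, p_m$; and the product rules $p_a q_b = p_{a+b} + (-1)^b p_{a-b}$ and $p_a p_b = \big(q_{a+b} - (-1)^b q_{a-b}\big)/\Delta$ for $a \ge b \ge 0$. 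None of this depends on the exact normalization of the Pell polynomials.

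Part (a): the square-sum at index $2n-1$ gives $p_{4n-1} = p_{2n-1}^2 + p_{2n}^2$, and Cassini at index $2n-1$ gives $p_{2n-1}^2 - 1 = p_{2n-2}p_{2n}$, so $p_{4n-1} - 1 = p_{2n}(p_{2n-2} + p_{2n}) = p_{2n}\, q_{2n-1}$ by the bridge. Hence $p_{2n}^2 + p_{4n-1}^2 - p_{4n-1} = p_{2n}^2 + p_{4n-1}(p_{4n-1}-1) = p_{2n}\big(p_{2n} + p_{4n-1}q_{2n-1}\big)$, and the product rule yields $p_{4n-1}q_{2n-1} = p_{6n-2} + (-1)^{2n-1}p_{2n} = p_{6n-2} - p_{2n}$, so the parenthesis collapses to $p_{6n-2}$ and $p_{2n}^2 + p_{4n-1}^2 = p_{2n}p_{6n-2} + p_{4n-1}$. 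Part (b) is the identical computation with $2n-1$ replaced throughout by $2n+1$: $p_{4n+1} = p_{2n}^2 + p_{2n+1}^2$, $p_{2n+1}^2 - 1 = p_{2n}p_{2n+2}$, $p_{4n+1} - 1 = p_{2n}q_{2n+1}$, and $p_{4n+1}q_{2n+1} = p_{6n+2} - p_{2n}$. For (c), cancel the nonzero factor $p_{2n-1}$: the claim becomes $p_{2n-1}(p_{2n}^2 + p_{2n+1}^2) = p_{2n}p_{4n} + p_{2n+1}$, i.e.\ $p_{2n-1}p_{4n+1} = p_{2n}p_{4n} + p_{2n+1}$ after one more use of the square-sum; and by the product rule for $p_a p_b$, $p_{2n-1}p_{4n+1} - p_{2n}p_{4n} = \big(q_{6n} + q_{2n+2}\big)/\Delta - \big(q_{6n} - q_{2n}\big)/\Delta = (q_{2n+2} + q_{2n})/\Delta = p_{2n+1}$ by $q_{m+1} + q_{m-1} = \Delta\, p_m$ at $m = 2n+1$, which closes (c).

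I do not anticipate a real obstacle here — the three identities are standard Binet bookkeeping — so the care goes elsewhere: into (1) committing to the normalization of the Pell polynomials used in Section \ref{s:pell} (the argument survives any choice, since it uses only $\alpha\beta = -1$), and (2) making precise what ``digit in base $b$'' means for polynomial $b$, so that step (i) is an honest verification. If one also wants the isolatedness asserted for the Fibonacci fixed points in Subsection \ref{intro-fixed} — that these have no $S_b$-preimage — that needs the extra bounded check that a preimage would be a numeral of at most three digits whose digit-square-sum cannot equal $d_1.d_2|_b$; but since the statement only claims ``fixed point,'' I would relegate that to a remark.
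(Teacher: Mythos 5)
Your identities are correct and your argument goes through, but it is a genuinely different route from the paper's. The paper proves only part (c) explicitly, by cancelling the common factor $p_{2n-1}(x)$, applying the Pell--Lucas identity \eqref{eqn:pell-lucas}, and then using the Pell--Catalan identity \eqref{eqn:pell-catalan} twice at the symmetric center $3n$ (namely $p_{4n+1}p_{2n-1}=p_{3n}^2+p_{n+1}^2$ and $p_{2n}p_{4n}=p_{3n}^2-p_{n}^2$); parts (a) and (b) are not computed afresh at all, but are read off as the $i=2n$ instances of the general type I and type II Pell identities (Theorems \ref{thm:pell-type-I} and \ref{thm:pell-type-II}), mirroring the isolated fixed points of $\psi_{+}$ and $\psi_{-}$ (Theorems \ref{thm:fixed-points-plus} and \ref{thm:fixed-points-minus}) in the Fibonacci case. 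You instead verify all three parts uniformly from the Binet form, introducing the companion polynomials $q_m=\alpha^m+\beta^m$ and using Cassini, the bridge $q_m=p_{m+1}+p_{m-1}$, and the product rules $p_aq_b=p_{a+b}+(-1)^bp_{a-b}$, $p_ap_b=(q_{a+b}-(-1)^bq_{a-b})/\Delta$; I checked the index bookkeeping in (a), (b), (c) and it is right. Your approach buys a self-contained, normalization-independent computation that treats the three parts with one toolkit and makes (a), (b) explicit, whereas the paper's approach buys economy and structural context: the fixed points appear as special values of the same identities that generate the cycles, which is also what yields the ``isolated'' property in the Fibonacci analogue. One small caution: your legitimacy step (i) as stated --- degree comparison plus nonnegative coefficients --- does not by itself force $d(x)<b(x)$ at small integers $x$ (a low-degree polynomial with large coefficients can exceed a higher-degree one at $x=1$), so if you want that remark to be airtight you should instead invoke the index-monotonicity $p_j(x)<p_m(x)$ for $j<m$, $x\ge1$, together with, for (c), $p_{2n+1}p_{2n-1}=p_{2n}^2+1<p_{2n}q_{2n}=p_{4n}$ via \eqref{eqn:pell-cassini}; the paper sidesteps this entirely by treating the statement as a polynomial identity.
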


\begin{corollary}[Corollary \ref{cor:arithpellfixed}]
 Assume $n\geq1$ and $k\geq0$. The polynomial $p_{2n}(x)u.p_{2n+1}(x)u|_{b}$ is a fixed point in base $b=p_{4n}(x)+p_{2n+1}(x)p_{4n+1}(x)k$, where $u=p_{2n-1}(x)+p_{2n}(x)p_{2n+1}(x)k$.
\end{corollary}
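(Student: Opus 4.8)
The plan is to reduce the Corollary to part~(c) of Theorem~\ref{thm:pell-fixed-points} together with the Pell analog of identity~\eqref{e:fibplus}. Write $d_1=p_{2n}(x)u$ and $d_0=p_{2n+1}(x)u$ for the two proposed digits, so the polynomial in question is $m:=d_1b+d_0$. Since we work in a polynomial ring (where the base-$b$ representation of a polynomial is unique as soon as each ``digit'' has degree less than $\deg b$), it suffices to verify two things: (i) the algebraic identity $d_1^2+d_0^2=d_1b+d_0$; and (ii) that $(d_1,d_0)$ really is the base-$b$ digit string of $m$, i.e.\ $\deg d_1<\deg b$, $\deg d_0<\deg b$, and $d_1\neq0$. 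Granting both, $S_b(m)=d_1^2+d_0^2=d_1b+d_0=m$, as desired.

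For (i), factor out the common factor $u$: one has $d_1^2+d_0^2=u^2\bigl(p_{2n}(x)^2+p_{2n+1}(x)^2\bigr)$ and $d_1b+d_0=u\bigl(p_{2n}(x)b+p_{2n+1}(x)\bigr)$, so, as $u\neq0$ in the polynomial ring, (i) is equivalent to
\[
 u\bigl(p_{2n}(x)^2+p_{2n+1}(x)^2\bigr)=p_{2n}(x)\,b+p_{2n+1}(x).
\]
Now invoke the Pell analog of~\eqref{e:fibplus} in the specialization $i=2n$, $N-i=2n+1$, namely $p_{2n}(x)^2+p_{2n+1}(x)^2=p_{4n+1}(x)$. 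Substituting the definitions of $u$ and $b$, expanding, and cancelling the common term $k\,p_{2n}(x)p_{2n+1}(x)p_{4n+1}(x)$ from both sides reduces the displayed equality to
\[
 p_{2n-1}(x)\,p_{4n+1}(x)=p_{2n}(x)\,p_{4n}(x)+p_{2n+1}(x).
\]
But this is exactly the fixed-point relation of Theorem~\ref{thm:pell-fixed-points}(c): cancelling the factor $p_{2n-1}(x)$ from the statement that $p_{2n}(x)p_{2n-1}(x).p_{2n+1}(x)p_{2n-1}(x)$ is fixed in base $p_{4n}(x)$, and once more using $p_{2n}(x)^2+p_{2n+1}(x)^2=p_{4n+1}(x)$, yields precisely this identity. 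Hence (i) holds for every $k\geq0$; in particular the $k=0$ case of the Corollary is literally Theorem~\ref{thm:pell-fixed-points}(c).

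For (ii) we may assume $k\geq1$. Using $\deg p_m(x)=m-1$, the term $p_{2n}(x)p_{2n+1}(x)$ has degree $4n-1$, which exceeds $\deg p_{2n-1}(x)=2n-2$, so $\deg u=4n-1$; therefore $\deg d_1=6n-2$ and $\deg d_0=6n-1$. Likewise $p_{2n+1}(x)p_{4n+1}(x)$ has degree $6n$, exceeding $\deg p_{4n}(x)=4n-1$, so $\deg b=6n$. Thus $\deg d_1<\deg d_0<\deg b$ and $d_1\neq0$ (all relevant leading coefficients being positive), so $d_1.d_0|_b$ is a genuine two-digit polynomial in base $b$, which completes the argument.

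I expect the only genuine work to be bookkeeping: having on hand the Pell identities in exactly the forms used---the square-sum identity $p_{2n}(x)^2+p_{2n+1}(x)^2=p_{4n+1}(x)$ and, via Theorem~\ref{thm:pell-fixed-points}(c), the relation $p_{2n-1}(x)p_{4n+1}(x)=p_{2n}(x)p_{4n}(x)+p_{2n+1}(x)$---and checking the degree count for every $n\geq1$. Both are routine once the Pell recurrences are set up; indeed, if the Pell addition formula $p_{m+r}(x)=p_{m+1}(x)p_r(x)+p_m(x)p_{r-1}(x)$ is already available in Section~\ref{s:pell}, one can establish the relation $p_{2n-1}(x)p_{4n+1}(x)=p_{2n}(x)p_{4n}(x)+p_{2n+1}(x)$ directly and dispense with invoking part~(c) at all.
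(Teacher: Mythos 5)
Your proof is correct and follows essentially the route the paper intends: its entire proof is the remark that the argument is ``similar to that of Theorem~\ref{thm:pell-fixed-points}~\ref{thm:pell-fixed-points-c},'' i.e.\ factor the common $u$ out of $d_1^2+d_0^2-d_1b-d_0$ and verify the remaining identity, which is exactly what you do---with the small economy that after cancelling the $k\,p_{2n}(x)p_{2n+1}(x)p_{4n+1}(x)$ terms you can quote the relation $p_{2n-1}(x)p_{4n+1}(x)=p_{2n}(x)p_{4n}(x)+p_{2n+1}(x)$ already embodied in part \ref{thm:pell-fixed-points-c} (together with \eqref{eqn:pell-lucas}) instead of rerunning its Catalan-at-$3n$ computation. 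Your extra verification that $p_{2n}(x)u$ and $p_{2n+1}(x)u$ have degree less than $\deg b$, so that they are genuine base-$b$ digits, is additional rigor the paper omits, and it is correct.
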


\begin{theorem}[Theorem \ref{thm:pellplus}]
 The iterates of $S_b$, $b=p_{2n}(x)$, $n\geq2$, on $p_0(x).p_{2n-1}(x)|_b$ comprise a cycle with initial element $p_0(x).p_{2n-1}(x)|_b$ and terminal element $p_{n}(x).p_{n-1}(x)|_b$ if $n$ is even, or terminal element $p_{n-1}(x).p_{n}(x)|_b$ if $n$ is odd.
\end{theorem}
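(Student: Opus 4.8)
The plan is to follow the proof of the fundamental cycle of type~I (Section~\ref{s:fibonacci-i}) essentially verbatim, with the Fibonacci identity~\eqref{e:fibplus} replaced by its Pell counterpart. Put $N=2n-1$, so $b=p_{2n}(x)=p_{N+1}(x)$. The engine is the identity
\[
 p_i(x)^2+p_{N-i}(x)^2 \;=\; p_{N-(2j+1)}(x)\,p_{N+1}(x)+p_{2j+1}(x),\qquad j=\min(i,N-i),
\]
valid for $0\le i\le N$; it is the Pell analogue of~\eqref{e:fibplus}, available from Section~\ref{s:pell} (or from the Binet form $p_m(x)=(\alpha^m-\beta^m)/(\alpha-\beta)$, $\alpha+\beta=x$, $\alpha\beta=-1$, by a routine expansion). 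I use it together with the degree bookkeeping $\deg p_m(x)=m-1$, $p_m(x)$ monic for $m\ge1$, and $\deg b=2n-1=N$: for $0\le i\le N$ the polynomials $p_{2j+1}(x)$ (degree $2j\le 2n-2$) and $p_{N-2j-1}(x)$ (degree at most $2n-3$, or the zero polynomial) have degree less than $\deg b$, so they are legitimate base-$b$ digits, and since $b$ is monic the base-$b$ representation of a polynomial is unique.

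It follows that the finite set $\mathcal S=\{\,p_a(x).p_c(x)|_b:\ a+c=N,\ a,c\ge0\,\}$ consists of genuine one- or two-digit numerals and is closed under $S_b$: if $j=\min(a,c)$ then the identity gives
\[
 S_b\bigl(p_a(x).p_c(x)|_b\bigr)=p_a(x)^2+p_c(x)^2=p_{N-2j-1}(x).p_{2j+1}(x)|_b\in\mathcal S .
\]
In particular the starting value $p_0(x).p_{2n-1}(x)|_b=p_{2n-1}(x)$ lies in $\mathcal S$, its whole forward orbit stays there, and by the displayed formula every iterate after the first has an odd-index units digit. Since $S_b$ is blind to the order of the two digits, the action descends to a self-map $\psi$ of the $n$-element set of unordered index pairs $\{a,c\}$ with $a+c=N$, coordinatized by $j=\min(a,c)\in\{0,\dots,n-1\}$; explicitly $\psi(j)=\min\bigl(2j+1,\,2(n-1-j)\bigr)$.

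The crux is that $\psi$ is a \emph{permutation} of $\{0,\dots,n-1\}$. Writing $m$ for half the even member of a pair (so $m\in\{0,\dots,n-1\}$ and $\{a,c\}=\{2m,\,2(n-m)-1\}$), $\psi$ becomes $m\mapsto n-1-2m$ for $2m\le n-1$ and $m\mapsto 2m-n$ for $2m\ge n$; the two branches have complementary domains (no even number lies strictly between $n-1$ and $n$), each is strictly monotone in $m$ and hence injective, and their images lie in the residue classes of $n-1$ and of $n$ modulo~$2$ respectively, hence are disjoint. As these two residue classes partition $\{0,\dots,n-1\}$ and the branch domains have matching cardinalities, $\psi$ is a bijection. (This combinatorial step is base-independent and is exactly the index bookkeeping behind the type~I cycle, so one may instead invoke Section~\ref{s:fibonacci-i}.) A permutation of a finite set makes every point periodic; thus the class of $\{0,N\}$ is periodic under $\psi$, and when the orbit returns to it the corresponding iterate, being of the form $p_{N-2j-1}(x).p_{2j+1}(x)|_b$, must be $p_0(x).p_{2n-1}(x)|_b=p_{2n-1}(x)$ (the other numeral in that class, $p_{2n-1}(x).p_0(x)|_b$, has an even-index units digit). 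Hence the iterates of $S_b$ on $p_0(x).p_{2n-1}(x)|_b$ form a cycle through that element.

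It remains to name the terminal element $z$, characterized by $S_b(z)=p_{2n-1}(x)$. By the displayed formula, $S_b$ of the state with $\min$-index $j$ is $p_{N-2j-1}(x).p_{2j+1}(x)|_b$, which equals the one-digit numeral $p_{2n-1}(x)$ only when $p_{N-2j-1}(x)=0$ and $p_{2j+1}(x)=p_{2n-1}(x)$, i.e.\ $j=n-1$; so $z$ has index pair $\{n-1,n\}$. Written $z=p_{N-2j_\ast-1}(x).p_{2j_\ast+1}(x)|_b$ as produced from the preceding state, its units digit $p_{2j_\ast+1}(x)$ carries the \emph{odd} member of $\{n-1,n\}$. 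If $n$ is even that member is $n-1$, so the leading digit has index $N-(n-1)=n$ and $z=p_n(x).p_{n-1}(x)|_b$; if $n$ is odd it is $n$, so the leading digit has index $N-n=n-1$ and $z=p_{n-1}(x).p_n(x)|_b$, exactly as claimed. The one genuine obstacle is the permutation property of $\psi$; the rest is the Pell identity together with the degree estimates certifying that the relevant polynomials are true digits.
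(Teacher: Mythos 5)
Your proof is correct and follows essentially the same route as the paper: the Pell identity of type I (Theorem \ref{thm:pell-type-I}) reduces $S_b$ to the index map on pairs summing to $N=2n-1$, exactly the $\psi_{+}$ dynamics of Section \ref{s:fibonacci-i} (Theorem \ref{thm:psiplus}), with your two-branch monotonicity/parity argument merely a different verification that this map is a permutation (the paper instead exhibits its inverse), and the terminal pair $\{n-1,n\}$ identified the same way. One cosmetic slip: $p_m(x)$ is not monic (its leading coefficient is $2^{m-1}$), but this is harmless, since uniqueness of the two-digit representation only requires the digits to have degree less than $\deg b$.
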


\begin{theorem}[Theorem \ref{thm:arithpellplus}]
 The iterates of $S_b$, $b=p_{2n}(x)+p_{2n+1}(x)k$, on $p_0(x)+p_{1}(x)k.p_{2n-1}(x)+p_{2n}(x)k|_b$ via \eqref{e:pell-type-II} comprise a cycle with initial element
 $$
 p_0(x)+p_{1}(x)k.p_{2n-1}(x)+p_{2n}(x)k
 $$
 and terminal element
 $$
 p_{n}(x)+p_{n+1}(x)k.p_{n+1}(x)+p_{n+2}(x)k
 $$
 if $n$ is even, or terminal element
 $$
 p_{n+1}(x)+p_{n+2}(x)k.p_{n}(x)+p_{n+1}(x)k
 $$
 if $n$ is odd.
\end{theorem}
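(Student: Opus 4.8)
The plan is to re-run the proof of Theorem~\ref{thm:pellplus}, carrying the extra parameter $k$ along for the ride. The point is that the combinatorial skeleton of that argument — which two‑digit string maps to which under $S_b$, how long the orbit is, and which string is terminal — depends only on the \emph{indices} of the Pell polynomials involved, never on their values, so it is insensitive to $k$. The only genuinely new ingredient is a one‑parameter refinement of the identity~\eqref{e:pell-type-II} that powers the $k=0$ case. Throughout write $N=2n-1$ and, for brevity, $\langle m\rangle:=p_m(x)+p_{m+1}(x)k$. Two preliminary remarks set things up. First, $\langle m\rangle$ again satisfies the Pell recurrence, now with the shifted initial data $\langle 0\rangle=k$, $\langle 1\rangle=1+2xk$; in Binet terms, if $p_m=(\alpha^m-\beta^m)/(\alpha-\beta)$ with $\alpha\beta=-1$ and $\alpha+\beta=2x$, then $\langle m\rangle=(A\alpha^m-B\beta^m)/(\alpha-\beta)$ where $A=1+k\alpha$ and $B=1+k\beta$. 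Second, the base is exactly $b=p_{2n}(x)+p_{2n+1}(x)k=\langle 2n\rangle=\langle N+1\rangle$, and every string $\langle i\rangle.\langle N-i\rangle|_b$ appearing in the claimed cycle is a legitimate base‑$b$ representation: the $p_m(x)$ have nonnegative coefficients and are strictly increasing in $m$, hence so are the $\langle m\rangle$, so $0\le\langle m\rangle<\langle N+1\rangle=b$ for $0\le m\le N$ (and $\langle 0\rangle=k\neq0$ as a polynomial, so the representations are genuinely two digits; the degenerate case $k=0$ is Theorem~\ref{thm:pellplus}).

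The heart of the matter is the Pell‑and‑$k$ analogue of~\eqref{e:fibplus},
\[
 \langle i\rangle^2+\langle N-i\rangle^2 \;=\; \langle N-(2j+1)\rangle\,\langle N+1\rangle \;+\; \langle 2j+1\rangle,\qquad j=\min(i,N-i),\quad 0\le i\le N,
\]
which I take to be (the content of) identity~\eqref{e:pell-type-II}. If it is not already available from Section~\ref{s:pell}, I would prove it by substituting the Binet expressions and using that $N$ is \emph{odd}: then $(\alpha\beta)^{i}+(\alpha\beta)^{N-i}=(-1)^i+(-1)^{N-i}=0$, which kills the mixed $AB$‑terms on the left, while on the right $\alpha^{N-2j-1}\beta^{N+1}=\beta^{2j+2}$ and $\beta^{N-2j-1}\alpha^{N+1}=\alpha^{2j+2}$, turning the mixed terms there into pure powers of $\alpha$ and of $\beta$; the remaining identity collapses after invoking $A-1=k\alpha$, $B-1=k\beta$, $\alpha\beta=-1$. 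Equivalently one can match the coefficients of $k^0,k^1,k^2$ one at a time: the $k^0$ part is precisely the identity behind Theorem~\ref{thm:pellplus}; the $k^2$ part is a d'Ocagne‑type identity for Pell polynomials, whose sign works out exactly because $N$ is odd; and the $k^1$ part is a standard Pell convolution identity.

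With~\eqref{e:pell-type-II} in hand the rest is bookkeeping identical to the $k=0$ argument. Read in base $b=\langle N+1\rangle$, the identity says $S_b(\langle i\rangle.\langle N-i\rangle)=\langle N-(2j+1)\rangle.\langle 2j+1\rangle$, a string of the same shape with digit indices again summing to $N$; hence the orbit of the initial string $\langle 0\rangle.\langle N\rangle=p_0(x)+p_1(x)k.p_{2n-1}(x)+p_{2n}(x)k$ never leaves this finite family, and the map it induces on first‑digit indices, $i\mapsto N-2\min(i,N-i)-1$, is precisely the permutation analyzed in Theorem~\ref{thm:pellplus} — in particular a bijection of a finite index set, so the initial string really does lie on a cycle, of the same length as there. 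The terminal element, i.e.\ the unique string mapping back to $\langle 0\rangle.\langle N\rangle$, is then pinned down by the Pell analogue of Lucas's identity~\eqref{eqn:lucas} exactly as in Theorem~\ref{thm:pellplus}, with the parity of $n$ deciding the order of its two digits. The one place real work is needed is~\eqref{e:pell-type-II}, and inside it the $k^1$ cross‑term — most cleanly dispatched by the Binet computation above; a secondary, purely bookkeeping point is keeping the ``digit lies in range'' claims rigorous while treating the $p_m(x)$ as formal polynomials rather than integers.
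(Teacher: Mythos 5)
Your proposal is correct and follows essentially the paper's own route: the heart of it, the one-parameter identity for $\bigl(p_i(x)+p_{i+1}(x)k\bigr)^2+\bigl(p_{N-i}(x)+p_{N-i+1}(x)k\bigr)^2$, is exactly the paper's Theorem \ref{thm:pellfibcycle-i} (equation \eqref{e:pellfibcycle-i}), which the paper proves by your coefficient-matching alternative -- the $k^0$ part is the type-I Pell identity, the $k$ part follows from Vajda- and d'Ocagne-type lemmas, the $k^2$ part from Catalan -- and the remaining bookkeeping (the index map $\psi_{+}$ on pairs summing to $N$, with the Pell--Lucas identity \eqref{eqn:pell-lucas} fixing the terminal element by the parity of $n$) is the same as in Theorems \ref{thm:psiplus}, \ref{thm:arith-psiplus} and \ref{thm:pellplus}; your Binet computation is just an optional shortcut for the same identity. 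One caveat: made explicit, your argument gives terminal element $p_{n}(x)+p_{n+1}(x)k.p_{n-1}(x)+p_{n}(x)k|_b$ for $n$ even (and the reverse for $n$ odd), which agrees with the paper's Theorem \ref{thm:arithpellplus}; the statement as quoted above (the introduction's restatement) misprints the second digit as $p_{n+1}(x)+p_{n+2}(x)k$, which cannot be correct since the digit indices of every element of the orbit must sum to $N=2n-1$, so you should note that your proof establishes the corrected form.
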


\begin{theorem}[Theorem \ref{thm:pellminus}]
 The iterates of $S_b$, $b=p_{2n}(x)$, on $p_2(x).p_{2n-1}(x)|_b$ via \eqref{e:pell-type-I} comprise a cycle with initial element $p_2(x).p_{2n-1}(x)|_b$ and terminal element $p_{n}(x).p_{n+1}(x)|_b$ if $n$ is even, or terminal element $p_{n+1}(x).p_{n}(x)|_b$ if $n$ is odd.
\end{theorem}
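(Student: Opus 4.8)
The plan is to track the iteration $x_0=p_2(x).p_{2n-1}(x)|_b$, $x_{m+1}=S_b(x_m)$, writing $N=2n+1$ (so $p_{2n-1}(x)=p_{N-2}(x)$ and $b=p_{2n}(x)=p_{N-1}(x)$), and to show that the digit-index data is governed by a permutation of $\{2,\dots,n\}$ whose orbit of $2$ closes the loop. Throughout I will use identity \eqref{e:pell-type-I}, which for $N=2n+1$ reads $p_i(x)^2+p_{N-i}(x)^2=p_{N-(2j-1)}(x)\,p_{2n}(x)+p_{2j-1}(x)$ with $j=\min(i,N-i)$ (the Pell-polynomial counterpart of \eqref{e:fibminus}).

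First I would prove by induction that $x_m=p_{h_m}(x).p_{\ell_m}(x)|_b$ with $h_m+\ell_m=N$ and $2\le h_m,\ell_m\le 2n-1$, starting from $(h_0,\ell_0)=(2,2n-1)$. For the inductive step, since $\deg p_k(x)=k-1$ and $\max(h_m,\ell_m)\le 2n-1$, both digits of $x_m$ have degree $\le 2n-2<2n-1=\deg b$, so $S_b(x_m)=p_{h_m}(x)^2+p_{\ell_m}(x)^2$; then \eqref{e:pell-type-I} with $j_m:=\min(h_m,\ell_m)\in\{2,\dots,n\}$ gives $S_b(x_m)=p_{N-2j_m+1}(x)\,b+p_{2j_m-1}(x)$, where $2\le N-2j_m+1\le 2n-1$ and $3\le 2j_m-1\le 2n-1<2n$, so $x_{m+1}$ again has the required form, with $h_{m+1}=N-2j_m+1$ and $\ell_{m+1}=2j_m-1$. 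In particular the smaller indices obey $j_0=2$ and $j_{m+1}=f(j_m)$ with $f(j):=\min(2j-1,\ N-2j+1)$ on $\{2,\dots,n\}$.

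Next I would verify that $f$ is a permutation of $\{2,\dots,n\}$. Since $N$ is odd, $2j-1$ is odd while $N-2j+1=2(n+1-j)$ is even, so the minimum is never attained twice; a short computation gives $f(j)=2j-1$ for $j\le\lfloor(n+1)/2\rfloor$ and $f(j)=2(n+1-j)$ otherwise, and the two branches hit, each injectively, precisely the odd and the even elements of $\{2,\dots,n\}$, so $f$ is bijective. Hence the $f$-orbit of $2$ is a genuine cycle, say of length $L$, so $j_L=j_0=2$; the values $j_0,\dots,j_{L-1}$ are pairwise distinct, and because $x_{m+1}$ is an injective function of $j_m$, the numerals $x_0,\dots,x_{L-1}$ are pairwise distinct with $x_L=x_0$. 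Thus the iterates of $S_b$ on $p_2(x).p_{2n-1}(x)|_b$ form a genuine cycle; for $n=2$ it degenerates to the single fixed point $p_2(x).p_3(x)|_b$, and the hypothesis $n\ge2$ is needed only so that $p_2(x).p_{2n-1}(x)|_b$ is itself a valid numeral ($\deg p_2(x)=1<\deg b$).

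It remains to identify the terminal element $x_{L-1}$. Its smaller index is $j_{L-1}=f^{-1}(2)=n$, because $f(n)=\min(2n-1,2)=2$ and $f$ is injective; hence $x_{L-1}$ is one of $p_n(x).p_{n+1}(x)|_b$ and $p_{n+1}(x).p_n(x)|_b$, and indeed $S_b(x_{L-1})=p_n(x)^2+p_{n+1}(x)^2=p_2(x)\,b+p_{2n-1}(x)=x_0$ by the $i=n$ instance of \eqref{e:pell-type-I}, the Pell analog of Lucas's identity \eqref{eqn:lucas}. The orientation is forced by the predecessor $x_{L-2}$: its smaller index $j_{L-2}=f^{-1}(n)$ equals $(n+1)/2$ (first branch of $f$) when $n$ is odd, so $h_{L-1}=N-(n+1)+1=n+1$ and $\ell_{L-1}=n$, giving $x_{L-1}=p_{n+1}(x).p_n(x)|_b$; and $j_{L-2}=n/2+1$ (second branch) when $n$ is even, so $h_{L-1}=n$ and $\ell_{L-1}=n+1$, giving $x_{L-1}=p_n(x).p_{n+1}(x)|_b$. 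The only real difficulty is organizational — keeping the high/low digit positions and the two parity cases straight and checking the degree bounds at the endpoints $i=2$ and $i=n$ — since all the genuine arithmetic is already packaged in \eqref{e:pell-type-I}.
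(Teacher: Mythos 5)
Your proposal is correct and takes essentially the same route as the paper: the type-II Pell identity \eqref{e:pell-type-I} reduces the $S_b$-iteration to the index dynamics the paper encodes in $\psi_{-}$ (Theorem \ref{thm:psiminus}), with the Pell--Lucas identity \eqref{eqn:pell-lucas} closing the loop at the pair $\{n,n+1\}$. Your min-index permutation $f$ is just $\psi_{-}$ read off on the smaller index (a pair $[r,s]\in\mathcal{P}_{-}(N)$ is determined by its minimum), and your explicit bijectivity, orientation, and degree checks simply flesh out details the paper leaves implicit.
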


\begin{theorem}[Theorem  \ref{thm:arithpellminus}]
 Let $N=2n+1$, $n\geq 1$, and $k\geq0$. Then the iterates of $S_b$, $b=p_{N-1}(x)+p_{N-2}(x)k$, yield via \eqref{e:pellfibcycle-ii} a cycle with initial element
 $$
 p_{2}(x)+p_{1}(x)k.p_{N-2}(x)+p_{N-3}(x)k|_b
 $$
 and terminal element
 $$
 p_{n}(x)+p_{n-1}(x)k.p_{n-1}(x)+p_{n-2}(x)k|_b
 $$
 if $n$ is even, or terminal element
 $$
 p_{n-1}(x)+p_{n-2}(x)k.p_{n}(x)+p_{n-1}(x)k|_b
 $$
 if $n$ is odd.
\end{theorem}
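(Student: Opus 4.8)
\emph{Proof sketch.} The plan is to follow, \emph{mutatis mutandis}, the proof of the Fibonacci arithmetic cycle of type~II in Section~\ref{s:arithFibonacci-ii}, replacing the Fibonacci numbers by Pell polynomials and carrying the parameter $k$ along. The convenient bookkeeping device is the shifted sequence $q_m:=p_m(x)+p_{m-1}(x)k$: it again satisfies the defining Pell recurrence (by linearity), with $q_0=k$ and $q_1=1$, so that $b=p_{N-1}(x)+p_{N-2}(x)k=q_{N-1}$, and \eqref{e:pellfibcycle-ii} takes the shape of the verbatim $q$-analogue of \eqref{e:fibminus},
\[
 q_i^2+q_{N-i}^2 \;=\; q_{N-(2j-1)}\,b + q_{2j-1},\qquad j=\min(i,N-i),
\]
valid for $i$ in the appropriate range. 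Reading this off, $S_b$ sends the element $q_i.q_{N-i}|_b$ to $q_{N-(2j-1)}.q_{2j-1}|_b$, so the proof splits into: (A)~proving the displayed identity; (B)~checking that the digits occurring are legitimate base-$b$ digits; and (C)~following the indices through the iteration and identifying the first and last elements.

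For (A), expand each $q_m=p_m+kp_{m-1}$ and compare powers of $k$. The $k^0$ coefficient recovers the Pell-polynomial analogue of \eqref{e:fibminus} underlying the fundamental type~II Pell cycle of Theorem~\ref{thm:pellminus}; the $k^1$ and $k^2$ coefficients are two further Pell-polynomial identities of the same flavour, each obtainable from the Pell addition formula $p_{m+n}=p_mp_{n+1}+p_{m-1}p_n$ together with a Catalan-type identity, all of them relying on $N$ being odd (which is exactly what forces the pertinent cross-terms to cancel; in a Binet-type expansion it is the vanishing of $(-1)^i+(-1)^{N-i}$). For (B), note $\deg_x p_m(x)=m-1$ for $m\ge1$ and that the $k$-deformation does not raise the $x$-degree, so $\deg_x b=N-2$, while along the orbit every digit that occurs is some $q_\ell$ with $1<\ell<N-1$, hence of $x$-degree $<N-2$ and therefore a \emph{bona fide} digit in base $b$.

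For (C), the iteration is governed by the map $j\mapsto\min\bigl(2j-1,\,N+1-2j\bigr)$ on the smaller of the two digit indices, together with a record of which digit slot currently carries it; since \eqref{e:pellfibcycle-ii} has precisely the index combinatorics of \eqref{e:fibminus}, this is word-for-word the analysis of Section~\ref{s:arithFibonacci-ii}, so the orbit started at the index of the initial element runs through exactly the required indices (never reaching $1$, which is what keeps every digit valid) and returns to its starting point, whence the iterates starting from the initial element close up into a cycle with no pre-period. The parity of $n$ decides which slot carries the smaller index at the final step, and that is exactly where a $k$-deformed Pell analogue of Lucas's identity~\eqref{eqn:lucas}, built from $p_n(x)\bigl(p_{n-1}(x)+p_{n+1}(x)\bigr)=p_{2n}(x)$ and $p_n(x)^2+p_{n+1}(x)^2=p_{2n+1}(x)$, collapses the terminal digit pair onto consecutive indices and confirms that $S_b$ returns the terminal element to the initial one. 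I expect the $k^1$ and $k^2$ identities in step~(A) to be the main obstacle, since the expansion in powers of $k$ must be reconciled with the alternating signs in the Catalan identity; steps (B) and (C), and the handful of small-$n$ boundary cases, transfer from the Fibonacci argument with essentially only notational changes.
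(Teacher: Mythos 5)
Your sketch is essentially the paper's own argument: the paper proves Theorem \ref{thm:arithpellminus} by appeal to Theorem \ref{thm:pellfibcycle-ii}, which (like Theorems \ref{thm:arithfibcycle-i} and \ref{thm:arithfibcycle-ii}) is verified by expanding the $k$-deformed digits, checking the $k^0$, $k^1$, $k^2$ coefficients with Pell--Catalan/Vajda/d'Ocagne-type identities, and then reading the cycle off the index map $\psi_{-}$ of Theorem \ref{thm:arith-psiminus}, exactly your steps (A) and (C), with your degree check (B) a harmless addition. One caution: carried out faithfully, your index bookkeeping produces the terminal element $p_{n}(x)+p_{n-1}(x)k.p_{n+1}(x)+p_{n}(x)k|_b$ for $n$ even (and its swap for $n$ odd), as in the body statement of Theorem \ref{thm:arithpellminus}, whereas the introduction's restatement quoted here, with second digit $p_{n-1}(x)+p_{n-2}(x)k$, is a typo --- that element maps under $S_b$ to $p_{4}(x)+p_{3}(x)k.p_{N-4}(x)+p_{N-5}(x)k|_b$, not to the initial element.
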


\section{A summary of Beardon}\label{s:beardon}

The following results are from Beardon, \cite{beardon}.

\begin{lemma}[\cite{beardon}]
 Suppose $n$ has at least four digits in base $b$. Then $S_b(n)$ has fewer digits than $n$.
\end{lemma}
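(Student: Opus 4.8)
The plan is to bound $S_b(n)$ from above purely in terms of the number of base-$b$ digits of $n$, and then show that this bound is too small to allow the digit count to be maintained. Write $n = a_{d-1}b^{d-1} + \cdots + a_1 b + a_0$ with $0 \le a_i \le b-1$ and $a_{d-1} \ne 0$, so $n$ has exactly $d$ digits in base $b$ and, by hypothesis, $d \ge 4$. Since $a_i^2 \le (b-1)^2$ for each $i$, we get $S_b(n) = \sum_{i=0}^{d-1} a_i^2 \le d(b-1)^2$. It therefore suffices to prove the elementary scalar inequality
\[
 d(b-1)^2 < b^{\,d-1} \qquad (b \ge 2,\ d \ge 4),
\]
because then $S_b(n) < b^{d-1}$, so $S_b(n)$ has at most $d-1$ digits in base $b$, which is strictly fewer than the $d$ digits of $n$. (Note $n \ge 1$ forces some $a_i \ne 0$, hence $S_b(n) \ge 1$, so "has fewer digits" is meaningful.)

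I would prove the displayed inequality by induction on $d$. For the base case $d = 4$ one must check $4(b-1)^2 < b^3$ for all $b \ge 2$: if $b \ge 4$ then $4(b-1)^2 < 4b^2 \le b\cdot b^2 = b^3$, while for $b = 2$ one has $4 < 8$ and for $b = 3$ one has $16 < 27$. For the inductive step, assume $d(b-1)^2 < b^{d-1}$ with $d \ge 4$. Then, using $(b-1)^2 < b^2 \le b^{d-1}$ (the last inequality valid since $d-1 \ge 2$) and $b \ge 2$,
\[
 (d+1)(b-1)^2 = d(b-1)^2 + (b-1)^2 < b^{\,d-1} + b^2 \le 2b^{\,d-1} \le b^{\,d},
\]
which is exactly the inequality for $d+1$. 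This completes the induction and hence the proof.

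I do not anticipate a genuine obstacle here; this is a routine digit-count estimate, and the only point requiring any care is the tightest instance of the scalar inequality, namely the base case $d = 4$, which is handled by the short case split on $b \in \{2,3\}$ versus $b \ge 4$ above. Everything else is bookkeeping about how the number of base-$b$ digits of a positive integer $m$ relates to the threshold $m < b^{d-1}$.
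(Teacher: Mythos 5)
Your proof is correct; the paper itself states this lemma without proof (it is quoted from Beardon), and your argument---bounding $S_b(n)\le d(b-1)^2$ for a $d$-digit number and verifying $d(b-1)^2<b^{d-1}$ for all $d\ge4$, $b\ge2$---is exactly the standard digit-count estimate behind Beardon's result, with the only delicate instance ($d=4$, small $b$) handled correctly. Nothing needs fixing.
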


\begin{lemma}[\cite{beardon}]
 If $n$ has at most three digits in base $b$, then so does $S_b(n)$.
\end{lemma}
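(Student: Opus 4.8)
The plan is to bound $S_b(n)$ crudely in terms of $b$ alone and then verify an elementary polynomial inequality. Suppose $n$ has at most three digits in base $b$, say $n = d_2d_1d_0|_b$ with each $d_i \in \{0,1,\dots,b-1\}$ (allowing leading zeros). Since every digit satisfies $d_i \le b-1$, we have $d_i^2 \le (b-1)^2$, and therefore
\[
 S_b(n) = d_0^2 + d_1^2 + d_2^2 \le 3(b-1)^2.
\]
The key observation is that ``$n$ has at most three digits in base $b$'' is exactly the condition $n < b^3$, so it suffices to show $3(b-1)^2 < b^3$ for every integer $b \ge 2$; this yields $S_b(n) < b^3$, i.e.\ $S_b(n)$ also has at most three digits.

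It remains to establish $3(b-1)^2 < b^3$, which I would rewrite as $f(b) := b^3 - 3b^2 + 6b - 3 > 0$. For $b = 2$ one checks directly that $f(2) = 5 > 0$. For $b \ge 3$ one writes $f(b) = b^2(b-3) + (6b - 3)$, where $b^2(b-3) \ge 0$ and $6b - 3 > 0$, so $f(b) > 0$. This covers all $b \ge 2$ and completes the proof. (Alternatively, for $b \ge 3$ one avoids the rearrangement entirely by noting $3(b-1)^2 < 3b^2 \le b\cdot b^2 = b^3$, and disposes of $b = 2$ by the computation $3 < 8$.)

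There is essentially no obstacle here; the only point demanding a moment of care is that the crude estimate $3(b-1)^2$ is tightest in the smallest base $b = 2$, so that case is the one genuinely worth checking by hand rather than folding into the estimate for $b \ge 3$. The whole argument is a two-line computation once the reduction $n < b^3$ is made explicit.
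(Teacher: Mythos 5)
Your argument is correct: from $S_b(n)\le 3(b-1)^2$ and $3(b-1)^2<b^3$ for all $b\ge 2$ you get $S_b(n)<b^3$, which is exactly the three-digit condition. The paper itself gives no proof (it simply cites Beardon), and your estimate is the standard one used there, so there is nothing further to reconcile.
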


\begin{theorem}
 For any positive integer $n$, the successive images of $S_b$ either terminate in a fixed point or enter a cycle.
\end{theorem}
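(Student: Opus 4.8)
The plan is to chain the two preceding lemmas together with a pigeonhole argument. Fix $n$ and define the orbit $n_0 = n$, $n_{k+1} = S_b(n_k)$ for $k \geq 0$. First I would use the first lemma: as long as $n_k$ has at least four digits in base $b$, the number of base-$b$ digits of $n_{k+1}$ is strictly smaller than that of $n_k$. Since the number of digits is a positive integer and cannot strictly decrease indefinitely, there must be some index $K$ at which $n_K$ has at most three digits in base $b$.

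Next I would invoke the second lemma to show the orbit is trapped from that point on. If $n_K$ has at most three digits, then so does $n_{K+1} = S_b(n_K)$, and by a trivial induction every $n_k$ with $k \geq K$ has at most three digits in base $b$. Hence the tail $(n_k)_{k \geq K}$ lies entirely in the finite set $T$ of positive integers with at most three digits in base $b$, a set of size at most $b^3 - 1$.

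Finally, since $(n_k)_{k\geq K}$ is an infinite sequence taking values in the finite set $T$, the pigeonhole principle gives indices $K \leq i < j$ with $n_i = n_j$. Because $S_b$ is a well-defined function, $n_i = n_j$ forces $n_{i+\ell} = n_{j+\ell}$ for every $\ell \geq 0$ (again by induction on $\ell$), so the orbit is eventually periodic with period dividing $j - i$. The repeating block $n_i, n_{i+1}, \dots, n_{j-1}$ is then a cycle of $S_b$; if that cycle has length one it is a fixed point $S_b(n_i) = n_i$, and otherwise it is a genuine cycle of length at least two. This proves the statement.

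There is no real obstacle here: the argument is entirely elementary once the two lemmas are granted. The only points that need a word of care are the well-foundedness step (a strictly decreasing sequence of positive integers is finite, so we do reach the ``at most three digits'' regime) and the invariance step (the second lemma is exactly what guarantees the orbit cannot later escape back to four or more digits), and both are immediate. One could optionally sharpen the conclusion by noting $T$ can be replaced by the smaller invariant set $\{\,m : m \leq \max_{1\le d\le b-1} 3d^2\,\}$ once three-digit values appear, but this refinement is not needed for the stated result.
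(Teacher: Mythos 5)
Your proof is correct and is exactly the intended argument: the paper states this result without proof (citing Beardon), and the standard derivation is precisely your chaining of the two preceding lemmas --- the digit count strictly decreases until the orbit enters the finite invariant set of integers with at most three base-$b$ digits, after which the pigeonhole principle forces eventual periodicity, i.e.\ a fixed point or a cycle. Nothing is missing; the well-foundedness and invariance points you flag are handled correctly.
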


If $n$ is a fixed point of $S_b$, then $n$ is \emph{nontrivial} if $n>1$ and \emph{trivial} if $n=1$. Since a fixed point can be regarded as a cycle of length one, a cycle will always be assumed to have length at least two.

\begin{theorem}[\cite{beardon}]
 Any nontrivial fixed point of $S_b$ has exactly two digits in base $b$.
\end{theorem}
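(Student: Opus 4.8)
The plan is to combine the two digit-counting lemmas of Beardon recalled above with a direct elimination of the two remaining small cases. First I would observe that a nontrivial fixed point $n$ of $S_b$ cannot have four or more digits in base $b$: by the first lemma $S_b(n)$ would then have strictly fewer digits than $n$, contradicting $S_b(n)=n$. Hence $n$ has at most three digits, and since $n\geq 1$ it has one, two, or three digits.

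Next I would dispose of the one-digit case. If $n$ has a single digit then $n=d$ with $1\leq d\leq b-1$, so the fixed-point condition reads $d^2=d$, which forces $d=1$; thus the only one-digit fixed point is the trivial one. So a nontrivial fixed point has two or three digits, and it remains to rule out three digits.

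For that, the substantive step, write $n=xb^2+yb+z$ with $1\leq x\leq b-1$ and $0\leq y,z\leq b-1$. The condition $S_b(n)=n$ becomes $x^2+y^2+z^2=xb^2+yb+z$, which I would rearrange so that the dominant term is isolated:
\[
 x(b^2-x)+y(b-y)=z(z-1).
\]
On the left, $x\mapsto x(b^2-x)$ is increasing on $[1,b-1]$ (its derivative $b^2-2x$ is positive there, since $b-1<b^2/2$ for $b\geq 2$), so $x(b^2-x)\geq b^2-1$; and $y(b-y)\geq 0$ because $0\leq y\leq b-1$. On the right, $z(z-1)\leq(b-1)(b-2)=b^2-3b+2$. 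The displayed identity would then force $b^2-1\leq b^2-3b+2$, i.e.\ $3b\leq 3$, which fails for $b\geq 2$. Hence $n$ cannot have three digits, and combining this with the previous two paragraphs, a nontrivial fixed point has exactly two digits.

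I do not expect a genuine obstacle here; the only delicate point is the three-digit elimination, where one must rearrange the fixed-point equation so that $xb^2$ is pitted against quantities that are provably too small, and then check that the resulting inequality is strict uniformly in $b\geq 2$ — in particular in the degenerate case $b=2$, where $y(b-y)$ and $(b-1)(b-2)$ may vanish while $b^2-1$ does not.
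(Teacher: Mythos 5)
Your proposal is correct, but note that the paper itself contains no proof of this statement: it is quoted from Beardon, with only the two digit-counting lemmas recorded alongside it, so what you have done is reconstruct the missing argument rather than parallel an in-paper one. Your reconstruction is sound. The reduction to at most three digits via the first lemma is exactly right (a fixed point cannot lose digits under $S_b$), and the one-digit case $d^2=d$ correctly isolates the trivial fixed point. The substantive step, ruling out three digits, also checks out: from $x^2+y^2+z^2=xb^2+yb+z$ you get $x(b^2-x)+y(b-y)=z(z-1)$, and the bounds $x(b^2-x)\geq b^2-1$ (the map $t\mapsto t(b^2-t)$ is increasing on $[1,b-1]$ since $b-1<b^2/2$ for all $b\geq2$), $y(b-y)\geq0$, and $z(z-1)\leq(b-1)(b-2)$ force $b^2-1\leq b^2-3b+2$, i.e.\ $b\leq1$, a contradiction; I verified this is strict uniformly in $b\geq2$, including $b=2$. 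It is worth pointing out that the naive estimate $S_b(n)\leq 3(b-1)^2$ against $n\geq b^2$ does \emph{not} suffice here (it fails already for $b=3$, since $3\cdot4\geq9$), so your rearrangement that pits the dominant term $xb^2$ against provably smaller quantities is genuinely needed, and it gives a clean, self-contained elimination that the paper delegates entirely to the citation.
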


\begin{theorem}[\cite{beardon}]\label{thm:fixedpoints}
 The number $x.y|_b$ is a fixed point of $S_b$ if and only if $(x,y)$ is a solution to the equation
 \begin{equation}\label{eqn:fixedpoints}
  (2x-b)^2 + (2y-1)^2 = 1+b^2,
 \end{equation}
where $0 \leq x <b$, $1 \leq y <b$.
\end{theorem}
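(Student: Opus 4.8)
The plan is to turn the fixed-point condition into a quadratic Diophantine equation and then complete the square. By the preceding theorem, every nontrivial fixed point of $S_b$ has exactly two base-$b$ digits, and the trivial fixed point $1$ may be written as $0.1|_b$; so in every case a fixed point can be written as $n = x.y|_b = xb + y$, where $x$ and $y$ are the base-$b$ digits of $n$, so that $0 \le x < b$ and $0 \le y < b$. Since the digits of $n$ are exactly $x$ and $y$, we have $S_b(n) = x^2 + y^2$, and hence $S_b(n) = n$ is equivalent to
\[
 x^2 + y^2 = xb + y .
\]
I would then multiply this identity by $4$ and complete the square in each variable, using $4x^2 - 4bx = (2x - b)^2 - b^2$ and $4y^2 - 4y = (2y-1)^2 - 1$, to obtain $(2x-b)^2 + (2y-1)^2 = 1 + b^2$. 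Every step here is reversible, so the two equations are equivalent.

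To finish the \emph{only if} direction I must pin down the stated ranges $0 \le x < b$ and $1 \le y < b$. The inequalities $0 \le x < b$ and $0 \le y < b$ hold because $x, y$ are digits. If $y = 0$, then $x^2 + y^2 = xb + y$ forces $x(x - b) = 0$, hence $x = 0$ (as $0 \le x < b$), and then $n = 0$ is not a positive integer; so in fact $y \ge 1$. For the \emph{if} direction, suppose $(x, y)$ satisfies $(2x-b)^2 + (2y-1)^2 = 1 + b^2$ with $0 \le x < b$ and $1 \le y < b$. Reversing the algebra gives $x^2 + y^2 = xb + y$, and $n := xb + y$ is a positive integer whose base-$b$ representation is precisely $x.y|_b$, since both $x$ and $y$ lie in $\{0, 1, \dots, b-1\}$; hence $S_b(n) = x^2 + y^2 = n$, so $x.y|_b$ is a fixed point.

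The whole argument rests on a single reversible algebraic identity, so there is no serious obstacle; the only delicate point is the digit bookkeeping — justifying that $y \ge 1$, and making sure the \emph{if} direction produces an honest positive integer whose base-$b$ digit string is exactly $x.y$ rather than $n = 0$ or a number requiring a different expansion.
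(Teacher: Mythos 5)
Your proof is correct and follows the same route as the paper, whose entire argument is the instruction ``complete the square on $x^2+y^2=xb+y$''; your multiplication by $4$ and the identities $4x^2-4bx=(2x-b)^2-b^2$, $4y^2-4y=(2y-1)^2-1$ are exactly that computation spelled out. The extra digit bookkeeping (ruling out $y=0$ and checking the reverse direction yields a genuine two-digit positive integer) is a welcome refinement the paper leaves implicit.
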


\begin{proof}
 Complete the square on $x^2+y^2=xb+y$.
\end{proof}

\begin{thm}[\cite{beardon}]\label{thm:numberoffixedpoints}
 Let $\Fix_b$ be the set of fixed points of $S_b$. Then $|\Fix_b| = d(1+b^2)-1$, where $d(1+b^2)$ is the number of divisors of $1+b^2$.
\end{thm}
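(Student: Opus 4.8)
The plan is to count the lattice points on the circle \eqref{eqn:fixedpoints} directly, via the classical formula for the number of representations of an integer as a sum of two squares. By Theorem~\ref{thm:fixedpoints}, $|\Fix_b|$ is the number of pairs $(x,y)$ with $0\le x<b$, $1\le y<b$ and $(2x-b)^2+(2y-1)^2=1+b^2$. First I would substitute $X=2x-b$ and $Y=2y-1$, which is a bijection onto the integer solutions of
$$
X^2+Y^2=1+b^2
$$
satisfying $-b\le X\le b-2$, $1\le Y\le 2b-3$, and (so that $x,y\in\Z$) the parity conditions $X\equiv b\pmod 2$ and $Y\equiv 1\pmod2$. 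Since $X^2,Y^2\le 1+b^2<(b+1)^2$, every solution already has $|X|,|Y|\le b$, and since $X^2=1+b^2-Y^2\ge 1$ and likewise $Y^2\ge 1$, neither coordinate vanishes. Hence for $b\ge3$ (the case $b=2$ being a one-line check) the bounds $-b\le X$ and $Y\le 2b-3$ are automatic, the upper bound on $X$ is just ``$X\ne b$'', and ``$1\le Y$'' is just ``$Y>0$''; moreover, reducing the equation modulo $2$ shows $X\equiv b\pmod2$ if and only if $Y$ is odd, so the two parity conditions collapse to the single condition ``$Y$ odd.'' Thus
$$
|\Fix_b|=\#\bigl\{(X,Y)\in\Z^2:\ X^2+Y^2=1+b^2,\ Y\text{ odd},\ Y>0,\ X\ne b\bigr\}.
$$

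The heart of the proof is to count $r_2(1+b^2)$, the number of all signed ordered representations $X^2+Y^2=1+b^2$, and then impose the side conditions. The number-theoretic input is that $-1$ is a quadratic residue only modulo $2$ and primes $\equiv1\pmod4$, so no prime $\equiv3\pmod4$ divides $1+b^2$, while $1+b^2$ is odd when $b$ is even and exactly divisible by $2$ when $b$ is odd; writing $1+b^2=2^{\varepsilon}m$ with $m$ odd and $\varepsilon\in\{0,1\}$, Jacobi's two-square theorem gives $r_2(1+b^2)=4\,d(m)$, whereas $d(1+b^2)=(\varepsilon+1)\,d(m)$. I would then split on the parity of $b$. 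If $b$ is odd, then $1+b^2\equiv2\pmod4$ forces $X$ and $Y$ both odd in every representation, so all $r_2(1+b^2)$ of them have $Y$ odd, and the fixed-point-free involution $Y\mapsto-Y$ leaves exactly $r_2(1+b^2)/2$ with $Y>0$. If $b$ is even, then $1+b^2$ is odd, so exactly one of $X,Y$ is odd in each representation; the swap $X\leftrightarrow Y$ (fixed-point-free, since $X=Y$ would make $2X^2$ odd) shows that half the representations have $Y$ odd, and $Y\mapsto-Y$ halves this again, leaving $r_2(1+b^2)/4$ with $Y$ odd and $Y>0$. In both cases $(b,1)$ is the unique representation counted there with $X=b$, so deleting it yields $|\Fix_b|=2\,d(m)-1$ when $b$ is odd and $|\Fix_b|=d(m)-1$ when $b$ is even, each of which equals $d(1+b^2)-1$.

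The change of variables and the sign/parity bookkeeping are routine. The one genuine obstacle is assembling the representation count correctly: combining Jacobi's formula with the extra ``$Y$ odd'' restriction, and keeping track of the stray factor of $2$ produced by the power of $2$ dividing $1+b^2$, so that the two parity cases for $b$ land on the same final expression. I would also be careful to log the tiny edge cases (the base $b=2$, and the remark that the bound $Y\le 2b-3$ never actually binds) so that the identity is valid for every $b\ge2$.
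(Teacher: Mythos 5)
Your proof is correct, but note that the paper does not actually prove this theorem: it is quoted from Beardon, and the only ingredient established internally is Theorem \ref{thm:fixedpoints} (completing the square), which is exactly your point of departure, so the comparison is with the cited source rather than with anything in the text. Your route---count all lattice points on the circle \eqref{eqn:fixedpoints} via Jacobi's two-square theorem and then impose the digit constraints---checks out in every detail: no prime $\equiv 3 \pmod 4$ divides $1+b^2$ and $2$ divides it at most once, so writing $1+b^2=2^{\varepsilon}m$ with $m$ odd you correctly get $r_2(1+b^2)=4\,d(m)$ against $d(1+b^2)=(\varepsilon+1)\,d(m)$; on the solution set the two parity conditions genuinely collapse to ``$Y$ odd''; the involutions $Y\mapsto -Y$ and $X\leftrightarrow Y$ are fixed-point free for the reasons you give; and the unique discarded representation $(X,Y)=(b,1)$ corresponds to the inadmissible digit $x=b$, while $(-b,1)$ survives and accounts for the trivial fixed point, so both parities of $b$ land on $d(1+b^2)-1$. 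A sanity check against Example \ref{exm:fixedpoints} ($b=12$: four representations with $Y$ odd and positive, one of them $(12,1)$, leaving three fixed points) confirms the bookkeeping. The cost of your argument is the appeal to Jacobi's formula, a heavier external input than anything else used in the paper; in exchange you obtain a complete, self-contained justification of a statement the paper merely cites.
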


\begin{corollary}[\cite{beardon}]
 For each base $b$, $S_b$ has only the trivial fixed point if and only if $1+b^2$ is prime.
\end{corollary}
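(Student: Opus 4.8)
The plan is to derive this corollary directly from Theorem~\ref{thm:numberoffixedpoints}, which asserts that $|\Fix_b| = d(1+b^2)-1$. First I would record the observation that the trivial fixed point $1$ always belongs to $\Fix_b$: the base-$b$ representation of $1$ is the single digit $1$, so $S_b(1)=1^2=1$. Consequently, ``$S_b$ has \emph{only} the trivial fixed point'' is exactly the statement $\Fix_b=\{1\}$, i.e.\ $|\Fix_b|=1$.

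Combining this reformulation with the counting formula, $|\Fix_b|=1$ holds if and only if $d(1+b^2)=2$. The next step is to invoke the elementary fact that a positive integer $m$ has exactly two divisors if and only if $m$ is prime (its divisors then being precisely $1$ and $m$). Since $b\geq 2$ forces $1+b^2\geq 5>1$, this characterization applies to $m=1+b^2$ with no degenerate small case to handle, and one concludes that $|\Fix_b|=1$ if and only if $1+b^2$ is prime, which is the assertion of the corollary.

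I expect no real obstacle: essentially all of the mathematical content is already carried by Theorem~\ref{thm:numberoffixedpoints}, and what remains is only the translation of the phrase ``only the trivial fixed point'' into the numerical condition $|\Fix_b|=1$, together with the standard description of primes as the integers with exactly two positive divisors. The single point warranting a line of justification is that $1$ is indeed always a fixed point of $S_b$, so that the hypothesis of the corollary is not vacuous; this is immediate from $S_b(1)=1$.
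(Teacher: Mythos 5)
Your proposal is correct and matches the paper's (implicit) route: the corollary is stated there as an immediate consequence of Theorem~\ref{thm:numberoffixedpoints}, and your argument --- $|\Fix_b|=1$ iff $d(1+b^2)=2$ iff $1+b^2$ is prime, noting $S_b(1)=1$ --- is exactly that deduction, carefully spelled out.
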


Let $(u,v)$ be a solution to $u^2+v^2=1+b^2$, where $u$ has the same parity as $b$, $-b \leq u < b$, and $v$ is odd, $0<v<b$. The fixed points $x.y_b$ of $S_b$ are then given by all $(x,y)$ such that
\begin{equation}\label{eqn:fixedpointsuv}
 2x-b= u, \quad 2y-1=v.
\end{equation}

\begin{example}[Fixed points in base $b=12$]\label{exm:fixedpoints}
 If $b=12$, then all solutions to $u^2+v^2=1+12^2=145$ relevant to \eqref{eqn:fixedpointsuv} are $(-12,1)$, $(-8,9)$, and $(8,9)$. The first possibility $(-b,1)$ always gives the trivial fixed point. The other two possibilities are easily mentally computed to be $(x,y)=(2,5)$ and $(x,y)=(10,5)$, so that $\Fix_{12}=\{1,2.5,10.5\}_{12}=\{1,29,125\}$. By Theorem \ref{thm:numberoffixedpoints}, the number of fixed points is $d(5\cdot 29)-1=3$.
\end{example}

\begin{theorem}[\cite{beardon}]
 Any cycle of $S_b$ is a subset of $\{1, \dots, 2b^2-1\}$.
\end{theorem}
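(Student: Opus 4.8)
The plan is to combine the two digit-counting lemmas of Beardon quoted above with a short ``completing the square'' estimate showing that $S_b$ strictly decreases on three-digit inputs, and then to examine the largest element of a given cycle and its immediate predecessor.

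\emph{Step 1: every element of a cycle has at most three digits in base $b$.} By the first lemma, an element with at least four digits in base $b$ has a successor under $S_b$ with strictly fewer digits, while by the second lemma the property of having at most three digits is preserved by $S_b$. So if some element of a cycle had four or more digits, iterating $S_b$ would strictly decrease the digit count until reaching an element of at most three digits, after which every further iterate would also have at most three digits; the cycle could then never return to the original element, a contradiction. Hence every cycle element lies in $\{1,\dots,b^3-1\}$.

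\emph{Step 2: $S_b(n)<n$ whenever $b^2\le n<b^3$.} I would write such an $n$ as $n=xb^2+yb+z$ with $1\le x\le b-1$ and $0\le y,z\le b-1$, so that
$$ n-S_b(n)=x(b^2-x)+y(b-y)+z(1-z). $$
Minimising each term over its range — $x(b^2-x)\ge b^2-1$, $y(b-y)\ge 0$, and $z(1-z)\ge-(b-1)(b-2)$ — gives $n-S_b(n)\ge 3(b-1)>0$ for $b\ge 2$, which is the desired strict decrease.

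\emph{Step 3: bounding the cycle.} Let $n^*$ be the largest element of a cycle $C$ and let $m^*\in C$ be its predecessor, so $S_b(m^*)=n^*$ and $m^*\le n^*$. By Step 1, $m^*$ has at most three digits. If $m^*$ had exactly three digits, Step 2 would give $n^*=S_b(m^*)<m^*\le n^*$, impossible; hence $m^*$ has at most two digits, say $m^*=pb+q$ with $0\le p,q\le b-1$, so $n^*=S_b(m^*)=p^2+q^2\le 2(b-1)^2\le 2b^2-1$. Since every element of $C$ is a positive integer not exceeding $n^*$, this yields $C\subseteq\{1,\dots,2b^2-1\}$. The only mildly delicate point is the idea of tracking the \emph{maximum} element of the cycle together with its immediate predecessor; once that observation is in place, Steps 1 and 2 are routine, so I expect no real obstacle here.
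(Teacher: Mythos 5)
Your proof is correct, and it is worth noting that the paper itself offers no argument for this statement at all: it is quoted from Beardon as part of the background summary, so the only material you could have leaned on are the two digit-counting lemmas you cite, and you have supplied the missing quantitative ingredient yourself. Step 1 (a cycle cannot contain an element with four or more digits, since the digit count would drop to at most three and then stay there) is sound. Your Step 2 identity $n-S_b(n)=x(b^2-x)+y(b-y)+z(1-z)$ with the bounds $x(b^2-x)\ge b^2-1$, $y(b-y)\ge 0$, $z(1-z)\ge -(b-1)(b-2)$ gives $n-S_b(n)\ge 3(b-1)>0$, which is exactly the strict-decrease estimate needed on three-digit inputs, and Step 3's trick of looking at the predecessor $m^*$ of the maximal cycle element is the right one: $m^*$ is forced down to at most two digits, whence $n^*=p^2+q^2\le 2(b-1)^2\le 2b^2-1$. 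This is essentially Beardon's own route (three-digit numbers are strictly decreased by $S_b$, so the top of a cycle must be the image of a two-digit number), so you have in effect reconstructed the omitted proof; the only cosmetic remark is that the final bound $2(b-1)^2\le 2b^2-1$ deserves the one-line check $4b-2\ge 1$, which holds for every base $b\ge 2$.
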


Let $C$ be a cycle of $S_b$, and let an element $a$ of $C$ be designated as the \emph{initial element}. The \emph{terminal element} of $C$ is then that element $z$ of the cycle such that $S_b(z)=a$.  The initial element is chosen for convenience but will often be the smallest element of $C$. If the initial element of a cycle is also the smallest element, then the cycle is said to be in \emph{standard form}. The fundamental cycles in Sections \ref{s:fibonacci-i} and \ref{s:fibonacci-ii} are in standard form. We define $\Z|_b =\{1,\dots b^2-1\}$, $b>2$, since all elements of a cycle in this paper will have at most two digits.

\begin{remark}
 Let $\mathcal{C}_b$ be the set of all cycles of $S_b$. There is no known formula to compute $|\mathcal{C}_b|$ or algorithm besides direct search to determine $\mathcal{C}_b$.
\end{remark}

\begin{example}[Cycles in base $b=12$]
 The set of cycles $\mathcal{C}_{12} $ consists of the following:
 \begin{enumerate}
 \item $\{ 5, 2.1 \}_{12}=\{5, 25\}$;
 \item $\{8, 5.4, 3.5, 2.10, 8.8, 10.8, 1.1.8, 5.6, 5.1, 2.2\}_{12}=\{8, 64, 41, 34, 104, 128, 164, 66, 61, 26\}$;
 \item $\{1.8, 5.5, 4.2\}_{12}=\{20, 65, 50\}$;
 \item $\{6.8, 8.4\}_{12}=\{80, 100\}$.
 \end{enumerate}
\end{example}

\section{Fibonacci numbers and identities}\label{s:fibonacci}

The \emph{Fibonacci sequence} $\left(F_{n}\right)_{n=0}^{\infty}$ is recursively defined by $F_0=0$, $F_1=1$, $F_{n}=F_{n-1}+F_{n-2}$, $n\geq2$, \cite{A000045}. The following identities will be used frequently. The book \cite{koshy2001} by T.~Koshy is a comprehensive resource. See Section \ref{s:pell} for generalization of the identities to the Pell polynomials.

\begin{description}
  \item[Cassini's identity]
  \begin{equation}\label{eqn:cassini}
    F_{n}^2 = F_{n-1}F_{n+1} - (-1)^n
  \end{equation}
  \item[Catalan's identity]
  \begin{equation}\label{eqn:catalan}
    F_{n}^2 = F_{n+r}F_{n-r} + (-1)^{n-r} F_r^2
  \end{equation}
  \item[Vajda's identity]
  \begin{equation}\label{eqn:vajda}
    F_{n+r} F_{n+s} = F_{n}F_{n+r+s} + (-1)^n F_r F_s
  \end{equation}
  \item[Lucas's identity]
  \begin{equation}\label{eqn:lucas}
    F_{2n+1} = F_{n+1}^2 + F_{n}^2
  \end{equation}
  \item[d'Ocagne's identity]
  \begin{equation}\label{eqn:docagne}
    F_{2n} = F_{n+1}^2 - F_{n-1}^2
  \end{equation}
\end{description}

\section{Fibonacci cycles of type I}\label{s:fibonacci-i}

The author observed that cycles in base $b=F_{2n}$, $n\geq2$, \cite{A001906}, and initial term $F_{2n-1}=F_{0}.F_{2n-1}|_b$, \cite{A001519},  all have digits in the Fibonacci sequence. See Table \ref{tab:fibcycle-type-I} for some examples. We choose $F_1=1$ so that the sum of the indices of the second term of each cycle is $N=2n-1$. They are a consequence of the following generalization of Lucas's identity \eqref{eqn:lucas}.

\begin{center}
\begin{table}
\begin{tabular}{c*{9}{r@{.}l}}\toprule
 Base $b$ & \multicolumn{18}{l}{Fundamental cycle} \\\midrule
$F_{4}$ & $F_{0}$&$F_{3}$, &  $F_{2}$&$F_{1}$ \\
$F_{6}$ & $F_{0}$&$F_{5}$, &  $F_{4}$&$F_{1}$, & $F_{2}$&$F_{3}$\\
$F_{8}$ & $F_{0}$&$F_{7}$, &  $F_{6}$&$F_{1}$, & $F_{4}$&$F_{3}$\\
$F_{10}$ & $F_{0}$&$F_{9}$, & $F_{8}$&$F_{1}$, & $F_{6}$&$F_{3}$, & $F_{2}$&$F_{7}$, & $F_{4}$&$F_{5}$\\
$F_{12}$ & $F_{0}$&$F_{11}$, & $F_{10}$&$F_{1}$, & $F_{8}$&$F_{3}$, & $F_{4}$&$F_{7}$, & $F_{2}$&$F_{9}$, & $F_{6}$&$F_{5}$\\
$F_{14}$ & $F_{0}$&$F_{13}$, & $F_{12}$&$F_{1}$, & $F_{10}$&$F_{3}$, & $F_{6}$&$F_{7}$\\
$F_{16}$ & $F_{0}$&$F_{15}$, & $F_{14}$&$F_{1}$, & $F_{12}$&$F_{3}$, & $F_{8}$&$F_{7}$\\
$F_{18}$ & $F_{0}$&$F_{17}$, & $F_{16}$&$F_{1}$, & $F_{14}$&$F_{3}$, & $F_{10}$&$F_{7}$, & $F_{2}$&$F_{15}$, & $F_{12}$&$F_{5}$, & $F_{6}$&$F_{11}$, & $F_{4}$&$F_{13}$, & $F_{8}$&$F_{9}$\\
$F_{20}$ & $F_{0}$&$F_{19}$, & $F_{18}$&$F_{1}$, & $F_{16}$&$F_{3}$, & $F_{12}$&$F_{7}$, & $F_{4}$&$F_{15}$, & $F_{10}$&$F_{9}$
\\\bottomrule
\end{tabular}
\caption{\label{tab:fibcycle-type-I} Some Fibonacci cycles of type I in bases $b=F_{2n}$ with initial terms $F_{2n-1}=F_{0}.F_{2n-1}|_b$. See also Table \ref{tab:pluscycles}.}
\end{table}
\end{center}

\begin{theorem}\label{thm:fibplus}
 Let $N=2n-1$, $n\geq1$. Then
\begin{equation}\label{e:fibplus}
 F_{N-i}^2 + F_i^2 = F_{N-(2j+1)} F_{N+1} + F_{2j+1},
\end{equation}
where $j=\min(i,N-i)$ and $0\leq i \leq n-1$. Observe that $N-(2j+1)$ is always even, and that the indices $N-(2j+1)$ and $2j+1$ sum to $N$. If $i=n-1$, we have Lucas's identity \eqref{eqn:lucas}.
\end{theorem}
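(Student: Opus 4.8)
The plan is to reduce the identity \eqref{e:fibplus} to a combination of two classical identities already recorded in Section~\ref{s:fibonacci}, namely Catalan's identity \eqref{eqn:catalan} and Lucas's identity \eqref{eqn:lucas}. First I would dispose of the $\min$: since $0\le i\le n-1$ and $N=2n-1$ we have $N-i = 2n-1-i\ge n > n-1\ge i$, so $j=\min(i,N-i)=i$ throughout the stated range, and the claim is simply
\[
 F_{N-i}^2 + F_i^2 = F_{N-2i-1}\,F_{N+1} + F_{2i+1}.
\]
Along the way this also records the auxiliary assertions of the theorem, since $N-2i-1 = 2(n-1-i)$ is even and $(N-2i-1)+(2i+1)=N$.

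The key step is to apply Catalan's identity \eqref{eqn:catalan} with the substitution $n\mapsto N-i$ and $r\mapsto i+1$. Because $(N-i)+(i+1)=N+1$ and $(N-i)-(i+1)=N-2i-1$, this gives
\[
 F_{N-i}^2 = F_{N+1}\,F_{N-2i-1} + (-1)^{\,N-2i-1}\,F_{i+1}^2 .
\]
Since $N=2n-1$ is odd we have $(-1)^{N-2i-1}=(-1)^{N-1}=1$, so $F_{N-i}^2 - F_{i+1}^2 = F_{N+1}\,F_{N-2i-1}$. Adding $F_i^2+F_{i+1}^2$ to both sides and invoking Lucas's identity \eqref{eqn:lucas} in the form $F_{2i+1}=F_{i+1}^2+F_i^2$ then yields
\[
 F_{N-i}^2 + F_i^2 = F_{N+1}\,F_{N-2i-1} + F_{i+1}^2 + F_i^2 = F_{N+1}\,F_{N-2i-1} + F_{2i+1},
\]
which is exactly the asserted identity.

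Finally I would note the two boundary cases as sanity checks: when $i=n-1$ one has $N-2i-1=0$ and $2i+1=N$, so the first term on the right vanishes and the identity collapses to $F_{N-(n-1)}^2+F_{n-1}^2=F_N$, i.e. to Lucas's identity \eqref{eqn:lucas} itself; and when $i=0$ it becomes $F_N^2 = F_{N-1}F_{N+1}+1$, i.e. Cassini's identity \eqref{eqn:cassini} (again using that $N$ is odd).

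I do not expect a real obstacle here: the only care required is the index bookkeeping in the specialization of \eqref{eqn:catalan} and the verification that the sign $(-1)^{N-2i-1}$ equals $+1$, which rests solely on the parity of $N$. If one prefers to avoid Catalan's identity, the same computation runs from Vajda's identity \eqref{eqn:vajda} with $n\mapsto N-i$, $r\mapsto i+1$, $s\mapsto-(i+1)$ together with $F_{-m}=(-1)^{m+1}F_m$, or from an induction on $i$ with base case $i=0$ (Cassini) and inductive step powered by the factorization $F_{m+1}^2-F_m^2=F_{m-1}F_{m+2}$; the latter route is, however, messier near the endpoints of the range of $i$.
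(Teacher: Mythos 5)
Your proposal is correct and follows essentially the same route as the paper: apply Catalan's identity \eqref{eqn:catalan} with the shift $r=j+1$ (noting the sign $(-1)^{N-2j-1}=+1$ since $N$ is odd) and then collapse $F_{j+1}^2+F_j^2$ via Lucas's identity \eqref{eqn:lucas}. Your extra observation that $j=i$ throughout the range $0\le i\le n-1$ and the boundary checks ($i=0$ giving Cassini, $i=n-1$ giving Lucas) are harmless refinements of the same argument.
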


\begin{proof}
Let us assume that $j$ is the smaller of $i$ and $N-i$, $0\leq i \leq n-1$. By Catalan's identity \eqref{eqn:catalan} and Lucas's identity \eqref{eqn:lucas},
\begin{align*}
 F_{N-i}^2 + F_i^2 &= F_{N-j}^2 + F_j^2 \\
 &= F_{(N+1)-2(j+1)} F_{(N+1)} + (-1)^{(N+1)-2(j+1)}F_{j+1}^2 + F_j^2 \\
 &= F_{N-(2j+1)} F_{N+1} + F_{j+1}^2 + F_j^2 \\
 F_{N-i}^2 + F_i^2 &= F_{N-(2j+1)} F_{N+1} + F_{2j+1},\quad (0\leq i \leq n-1).\qedhere
\end{align*}
\end{proof}

Let ${\mathcal{P}}_{+}(N)$ be the set of all pairs $[r,s]$ of nonnegative integers, where $r+s=N$ and $s$ is odd, so that $r$ is necessarily even. Note that $|{\mathcal{P}}_{+}(2n-1) |=n$. Define $\psi_{+} : {\mathcal{P}}_{+}(N) \rightarrow {\mathcal{P}}_{+}(N)$ by
$$
\psi_{+}([r,s])=[N-(2t+1),2t+1],\quad t=\min(r,s).
$$
We write $\psi_{+}$ instead of $\psi_{+}^N$ since $N$ is fixed, once chosen. Note that $\psi_{+}([n,n-1])=[0,N]$ if $n$ is even, or $\psi_{+}([n-1,n])=[0,N]$ if $n$ is odd. Now assume $n\geq2$. Define the map $\Psi_{+} : {\mathcal{P}}_{+}(N) \rightarrow \mathbb{Z}|_{b}$ by $\Psi_{+}([r,s])=F_{r}.F_{s}|_b$, where $b=F_{N+1}$. Thus, we have
\begin{equation}\label{eqn:psiplus}
 S_{b} (F_{r}.F_{s}) = \Psi_{+}(\psi_{+}([r,s])).
\end{equation}
Thus, we need only determine the cycles and fixed points of $\psi_{+}$ on ${\mathcal{P}}_{+}(N)$ to obtain the cycles and fixed points of $S_{b}$, $b=F_{N+1}$, on $\mathbb{Z}|_{b}$.

\begin{theorem}[$\psi_{+}$ orbit]\label{thm:psiplus}
 Let $N=2n-1$, $n\geq1$. If $n=1$, then $[0,1]$ is a fixed point of $\psi_{+}$. If $n\geq2$, then the iterates of $\psi_{+}$ on $[0,N]$ comprise a cycle with initial element $[0,N]$ and terminal element $[n,n-1]$ if n is even, or $[n-1,n]$ if $n$ is odd.
\end{theorem}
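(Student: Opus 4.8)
The plan is to avoid any explicit description of the orbit $[0,N],\psi_+([0,N]),\psi_+^2([0,N]),\dots$ and instead observe that $\psi_+$ is a permutation of the finite set $\mathcal{P}_+(N)$; the theorem then follows almost formally. First I would verify that $\psi_+$ is a well-defined self-map of $\mathcal{P}_+(N)$. For $[r,s]\in\mathcal{P}_+(N)$ set $t=\min(r,s)$; since $t\le N/2$ and $N$ is odd, in fact $t\le(N-1)/2=n-1$, hence $2t+1\le N$, so the first coordinate $N-(2t+1)$ of $\psi_+([r,s])$ is a nonnegative even integer, the second coordinate $2t+1$ is odd, and the two coordinates sum to $N$. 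Thus $\psi_+([r,s])\in\mathcal{P}_+(N)$.

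Next I would prove that $\psi_+$ is injective, which I expect to be the only step needing genuine attention (and even there the work is pure parity bookkeeping). Suppose $\psi_+([r_1,s_1])=\psi_+([r_2,s_2])$. Comparing the second coordinates of the common image gives $2\min(r_1,s_1)+1=2\min(r_2,s_2)+1$, so $\min(r_1,s_1)=\min(r_2,s_2)=:t$; since each pair has coordinate sum $N$ and minimum $t$, we get $\{r_i,s_i\}=\{t,\,N-t\}$ as unordered pairs for $i=1,2$, and $t\ne N-t$ because $N$ is odd. Of the two orderings $[t,N-t]$ and $[N-t,t]$, exactly one lies in $\mathcal{P}_+(N)$, namely the one whose second coordinate is odd; and since $N$ is odd, exactly one of $t$, $N-t$ is odd. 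Both $[r_i,s_i]$ therefore equal that common ordering, so $[r_1,s_1]=[r_2,s_2]$. Being an injective self-map of a finite set, $\psi_+$ is a bijection, hence every $\psi_+$-orbit is a cycle; in particular the forward orbit of $[0,N]$ is a cycle, whose terminal element is by definition the unique $z\in\mathcal{P}_+(N)$ with $\psi_+(z)=[0,N]$.

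Finally I would compute this $z$. Writing $z=[r,s]$, the equation $\psi_+(z)=[0,N]$ forces $2\min(r,s)+1=N$, i.e. $\min(r,s)=n-1$, so $\{r,s\}=\{n-1,\,n\}$; the ordering with odd second coordinate is $[n,n-1]$ when $n$ is even and $[n-1,n]$ when $n$ is odd, which is precisely the claimed terminal element. For $n\ge2$ this $z$ is distinct from $[0,N]$, so the cycle genuinely has length at least two. The case $n=1$ is handled separately and is immediate: then $N=1$, $\mathcal{P}_+(1)=\{[0,1]\}$, and $\psi_+([0,1])=[1-1,1]=[0,1]$, so $[0,1]$ is a fixed point. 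There is no real dynamical obstacle here, since finiteness together with injectivity upgrades the a priori merely eventually periodic orbit into an honest cycle through $[0,N]$; the only place care is required is the parity argument that makes the preimage in $\mathcal{P}_+(N)$ unique.
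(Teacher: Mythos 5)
Your proposal is correct and follows essentially the same route as the paper: both arguments amount to showing $\psi_{+}$ is a permutation of the finite set $\mathcal{P}_{+}(N)$ (the paper by exhibiting the inverse map explicitly, you by injectivity plus finiteness), so the orbit of $[0,N]$ is a genuine cycle, and then identifying the terminal element as the unique preimage of $[0,N]$, namely $[n,n-1]$ for $n$ even and $[n-1,n]$ for $n$ odd. Your write-up is somewhat more detailed (well-definedness, the parity argument for uniqueness of the preimage, and the $n=1$ case), but the underlying idea is the same.
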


\begin{proof}
 Observe that $(\psi_{+})^{-1}([r,2s+1])=[N-s,s]$ if $s$ is odd, and $[s,N-s]$ if $s$ is even, where $r=N-(2s+1)$. Further, since $\psi_{+}([n,n-1])=[0,2n-1]$ if $n$ is even, and $\psi_{+}([n-1,n])=[0,2n-1]$ if $n$ is odd, we have that  $[0,N]$ and $[n,n+1]$, $n$ even, or $[n+1,n]$, $n$ odd, are in the same $\psi_{+}$-orbit.
\end{proof}

Let us call the $\psi_{+}$-cycle generated by $[0,N]$ the \emph{fundamental cycle} of $\psi_{+}$, since it occurs for every integer $n\geq2$. Furthermore, we require $n\geq2$ if $F_{2n}$ is to be a meaningful base. See Table \ref{tab:pluscycles} for some cycles and fixed points. See Table \ref{tab:fibcycle-type-I} for some Fibonacci cycles. See also Section \ref{s:fixedpoints} on fixed points.

\begin{center}
\begin{table}
\begin{tabular}{rl}\toprule
 $N$ & Cycles and fixed points \\\midrule
$1$ & [[0,1]] \\
$3$ & [[0, 3], [2, 1]] \\
$5$ & [[0, 5], [4, 1], [2, 3]] \\
$7$ & [[0, 7], [6, 1], [4, 3]] \\
   & [[2, 5]] \\
$9$ & [[0, 9], [8, 1], [6, 3], [2, 7], [4, 5]] \\
$11$ & [[0, 11], [10, 1], [8, 3], [4, 7], [2, 9], [6, 5]] \\
$13$ & [[0, 13], [12, 1], [10, 3], [6, 7]] \\
   & [[2, 11], [8, 5]] \\
   & [[4, 9]] \\
$15$ & [[0, 15], [14, 1], [12, 3], [8, 7]] \\
   & [[2, 13], [10, 5], [4, 11], [6, 9]] \\
$17$ & [[0, 17], [16, 1], [14, 3], [10, 7], [2, 15], [12, 5], [6, 11], [4, 13], [8, 9]] \\
$19$ & [[0, 19], [18, 1], [16, 3], [12, 7], [4, 15], [10, 9]] \\
   & [[2, 17], [14, 5], [8, 11]] \\
   & [[6, 13]] \\
$21$ & [[0, 21], [20, 1], [18, 3], [14, 7], [6, 15], [8, 13], [4, 17], [12, 9], [2, 19], [16, 5], [10, 11]] \\
$23$ & [[0, 23], [22, 1], [20, 3], [16, 7], [8, 15], [6, 17], [10, 13], [2, 21], [18, 5], [12, 11]]\\
   & [[4, 19], [14, 9]] \\\bottomrule
\end{tabular}
\caption{\label{tab:pluscycles} The cycles and fixed points of type I are obtained by the iterates of $\psi_{+}$ on $\mathcal{P}_{+}(N)$, $N=2n-1$, $n\geq1$.}
\end{table}
\end{center}

\begin{corollary}
  Every element of ${\mathcal{P}}_{+}(N)$, $N=2n-1$, $n\geq1$, is either a fixed point or in a cycle of $\psi_{+}$.
\end{corollary}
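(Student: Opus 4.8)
The plan is to recognize that this Corollary is essentially a restatement of the fact, already established inside the proof of Theorem~\ref{thm:psiplus}, that $\psi_{+}$ is a bijection of the finite set $\mathcal{P}_{+}(N)$. Once $\psi_{+}$ is known to be a permutation of a finite set, the standard decomposition of a permutation into disjoint cycles finishes the argument: every element of $\mathcal{P}_{+}(N)$ lies on exactly one such cycle; a cycle of length at least two is a cycle of $\psi_{+}$ in the sense used in this paper, and a cycle of length one is precisely a fixed point of $\psi_{+}$.

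Concretely, I would first note that $|\mathcal{P}_{+}(2n-1)| = n < \infty$, so it suffices to prove that $\psi_{+}$ is injective, since an injective self-map of a finite set is automatically surjective. For injectivity, suppose $\psi_{+}([r,s]) = \psi_{+}([r',s'])$. Reading off the (odd) second coordinate gives $2\min(r,s)+1 = 2\min(r',s')+1$, hence $\min(r,s) = \min(r',s')$; call this common value $t$. Since $r+s = r'+s' = N$, both unordered pairs $\{r,s\}$ and $\{r',s'\}$ coincide with $\{t,\,N-t\}$, and these are two distinct numbers because $N$ is odd. As $N = 2n-1$ is odd, $t$ and $N-t$ have opposite parities, so the requirement that the second coordinate be odd (and hence the first even) pins down the ordering uniquely; therefore $[r,s] = [r',s']$. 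Alternatively, one may simply invoke the explicit inverse
$$
(\psi_{+})^{-1}([r,2s+1]) = \begin{cases}[N-s,s] & s \text{ odd},\\ [s,N-s] & s \text{ even},\end{cases}\qquad r = N-(2s+1),
$$
recorded in the proof of Theorem~\ref{thm:psiplus}, which already displays $\psi_{+}$ as invertible, hence a permutation of $\mathcal{P}_{+}(N)$.

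There is no serious obstacle here; the only point demanding any care is the parity bookkeeping that makes $\psi_{+}$ well defined as a map \emph{into} $\mathcal{P}_{+}(N)$ and injective on it (in particular that $2t+1 \leq N$ because $t = \min(r,s) \leq (r+s)/2 = N/2$, and that $N-(2t+1)$ is even), and that bookkeeping has in effect already been carried out in the preceding development. It is worth remarking explicitly, for emphasis, that this Corollary together with \eqref{eqn:psiplus} and the map $\Psi_{+}$ transports the statement to $S_{b}$, $b = F_{N+1}$, on the image $\Psi_{+}(\mathcal{P}_{+}(N)) \subseteq \mathbb{Z}|_{b}$: every such element is either a fixed point of $S_b$ or lies in a cycle of $S_b$.
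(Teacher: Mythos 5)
Your proposal is correct, and it is in fact tighter than what the paper records. The paper's own proof is only a sketch: ``consider the iterates $\{\psi_{+}^{(k)}(z)\}$'' on the finite set $\mathcal{P}_{+}(N)$. Taken literally, finiteness alone yields only that the orbit of $z$ eventually repeats, i.e.\ eventually enters a cycle; it does not by itself exclude the possibility that $z$ is a pre-periodic ``tail'' element lying outside the cycle. The missing ingredient is exactly what you supply: injectivity of $\psi_{+}$, so that $\psi_{+}$ is a permutation of the $n$-element set $\mathcal{P}_{+}(N)$ and the standard disjoint-cycle decomposition puts every element on a cycle (a $1$-cycle being a fixed point, longer cycles being cycles in the paper's sense). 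Your injectivity argument is sound: equality of the odd second coordinates forces $\min(r,s)=\min(r',s')=t$, the constraint $r+s=r'+s'=N$ forces both unordered pairs to be $\{t,N-t\}$, and since $N$ is odd the parity convention (second coordinate odd, first even) fixes the ordering; alternatively the explicit inverse displayed in the proof of Theorem~\ref{thm:psiplus} already exhibits $\psi_{+}$ as a bijection, which is presumably what the author intends the reader to carry over. So the two arguments share the same underlying mechanism (a self-map of a finite set), but yours makes explicit the permutation structure that the paper leaves implicit, and that explicitness is what actually justifies the word ``either a fixed point or in a cycle'' rather than the weaker ``eventually enters a cycle.'' Your closing remark about transporting the statement through $\Psi_{+}$ to $S_b$, $b=F_{N+1}$, matches the paper's use of \eqref{eqn:psiplus}.
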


\begin{proof}
 Let $z$ be an element of ${\mathcal{P}}_{+}(N)$. If $z$ is a fixed point, then we are done. Suppose $z$ is not a fixed point and consider the iterates $\{ \psi_{+}^{(k)}(z) \}$ of $z$.
\end{proof}

The following two theorems are immediate.

\begin{theorem}\label{thm:fullfibplus}
 All fixed points and cycles of $\psi_{+}$ on ${\mathcal{P}}_{+}(N)$, $N=2n-1$, $n\geq2$, generate via \eqref{e:fibplus} Fibonacci fixed points and Fibonacci cycles.
\end{theorem}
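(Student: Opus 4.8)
The plan is to deduce this directly from the semiconjugacy relation \eqref{eqn:psiplus}: since $F_{r}.F_{s}|_{b}=\Psi_{+}([r,s])$, equation \eqref{eqn:psiplus} reads $S_{b}\circ\Psi_{+}=\Psi_{+}\circ\psi_{+}$ on ${\mathcal{P}}_{+}(N)$ with $b=F_{N+1}$, and this relation — which is exactly what identity \eqref{e:fibplus} encodes once one knows the right-hand side $F_{N-(2j+1)}F_{N+1}+F_{2j+1}$ is the two-digit base-$b$ expansion $F_{N-(2j+1)}.F_{2j+1}|_{b}$ — transports every $\psi_{+}$-orbit onto an $S_{b}$-orbit. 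The only thing to verify is that this transport is faithful: fixed points go to fixed points, and length-$m$ cycles go to length-$m$ cycles rather than collapsing.

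First I would record two elementary facts. (i) For $[r,s]\in{\mathcal{P}}_{+}(N)$ one has $0\le r,s\le N$, hence $0\le F_{r},F_{s}\le F_{N}<F_{N+1}=b$ (because $F_{N+1}-F_{N}=F_{N-1}\ge1$), and $s$ is odd so $F_{s}\ge F_{1}=1$; thus $\Psi_{+}([r,s])=F_{r}b+F_{s}$ lies in $\{1,\dots,b^{2}-1\}=\mathbb{Z}|_{b}$ and its base-$b$ digits are precisely $F_{r}$ and $F_{s}$, so $S_{b}(\Psi_{+}([r,s]))=F_{r}^{2}+F_{s}^{2}$, which is the left side of \eqref{e:fibplus}. (ii) $\Psi_{+}$ is injective on ${\mathcal{P}}_{+}(N)$: if $F_{r}b+F_{s}=F_{r'}b+F_{s'}$ with all four Fibonacci numbers in $[0,b)$, uniqueness of base-$b$ representations gives $F_{r}=F_{r'}$ and $F_{s}=F_{s'}$; since $r,r'$ are both even and the Fibonacci sequence is strictly increasing along the even indices $F_{0}<F_{2}<F_{4}<\cdots$, we get $r=r'$, and the same argument along the odd indices gives $s=s'$.

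Given (i) and (ii), the two cases are short. If $[r,s]$ is a fixed point of $\psi_{+}$, then \eqref{eqn:psiplus} yields $S_{b}(\Psi_{+}([r,s]))=\Psi_{+}(\psi_{+}([r,s]))=\Psi_{+}([r,s])$, so $F_{r}.F_{s}|_{b}$ is a fixed point of $S_{b}$ whose digits are Fibonacci numbers. If $\big([r_{0},s_{0}],\dots,[r_{m-1},s_{m-1}]\big)$ is a cycle of $\psi_{+}$ — so $m\ge2$, the $[r_{k},s_{k}]$ are pairwise distinct, and $\psi_{+}([r_{k},s_{k}])=[r_{k+1},s_{k+1}]$ with subscripts mod $m$ — then applying $\Psi_{+}$ and \eqref{eqn:psiplus} gives $S_{b}(\Psi_{+}([r_{k},s_{k}]))=\Psi_{+}([r_{k+1},s_{k+1}])$ for each $k$, while injectivity (ii) makes the $m$ integers $\Psi_{+}([r_{k},s_{k}])$ pairwise distinct; hence $\big(F_{r_{0}}.F_{s_{0}},\dots,F_{r_{m-1}}.F_{s_{m-1}}\big)|_{b}$ is a genuine $S_{b}$-cycle of length $m$ with all digits Fibonacci numbers. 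This is exactly the claim that each fixed point or cycle of $\psi_{+}$ generates, via \eqref{e:fibplus}, a Fibonacci fixed point or Fibonacci cycle.

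No step is difficult — the theorem is flagged as ``immediate'' — and the only place that wants a moment's thought is the injectivity claim (ii): one must observe that the single coincidence $F_{1}=F_{2}$ in the Fibonacci sequence causes no trouble here, since within ${\mathcal{P}}_{+}(N)$ the first coordinate is always even and the second always odd, and the Fibonacci sequence is strictly monotone on each parity class. The identical argument, run with identity \eqref{e:fibminus} in place of \eqref{e:fibplus}, handles the type-II cycles and fixed points of Section~\ref{s:fibonacci-ii}.
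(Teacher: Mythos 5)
Your proposal is correct and follows exactly the route the paper intends: the paper declares this theorem ``immediate'' from the semiconjugacy \eqref{eqn:psiplus}, and your argument simply spells that out, adding the useful details (digit bounds $F_r,F_s<F_{N+1}=b$ and injectivity of $\Psi_{+}$ via monotonicity of the Fibonacci numbers on each parity class) that make the transport of fixed points and cycles faithful.
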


\begin{theorem}[Fibonacci cycle of type I]\label{thm:fibpluscycle}
 Let $N=2n-1$, $n\geq2$. The iterates of $S_b$, $b=F_{N+1}$, on $F_0.F_{N}|_b$, comprise via \eqref{e:fibplus} a cycle with initial element $F_0.F_{N}|_b$, and terminal element $F_{n}.F_{n-1}|_b$ if $n$ is even, or terminal element $F_{n-1}.F_{n}|_b$ if $n$ is odd.
\end{theorem}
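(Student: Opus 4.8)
The plan is to reduce Theorem~\ref{thm:fibpluscycle} entirely to the already-established behaviour of the combinatorial map $\psi_{+}$ on ${\mathcal{P}}_{+}(N)$, so that essentially no new computation is required. The key bridge is the conjugacy relation \eqref{eqn:psiplus}, namely $S_{b}(F_{r}.F_{s}) = \Psi_{+}(\psi_{+}([r,s]))$ with $b = F_{N+1}$, which says that the diagram intertwining $\psi_{+}$ on ${\mathcal{P}}_{+}(N)$ with $S_{b}$ on $\mathbb{Z}|_{b}$ via $\Psi_{+}([r,s]) = F_{r}.F_{s}|_b$ commutes. Granting this, the $S_{b}$-orbit of $F_{0}.F_{N}|_b = \Psi_{+}([0,N])$ is exactly the $\Psi_{+}$-image of the $\psi_{+}$-orbit of $[0,N]$, and Theorem~\ref{thm:psiplus} already identifies that orbit as a genuine cycle (length $\geq 2$ since $n\geq2$) with initial element $[0,N]$ and terminal element $[n,n-1]$ or $[n-1,n]$ according to the parity of $n$.

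The steps, in order, are as follows. First, verify \eqref{eqn:psiplus} carefully: for $[r,s]\in{\mathcal{P}}_{+}(N)$ one has $F_{r}.F_{s}|_b = F_{r}F_{N+1} + F_{s}$, and $S_{b}(F_{r}.F_{s}) = F_{r}^2 + F_{s}^2$ provided that $F_{r}$ and $F_{s}$ are legitimate base-$b$ digits, i.e.\ that $0\leq F_{r} < F_{N+1}$ and $0\leq F_{s} < F_{N+1}$; since $r,s \leq N < N+1$ and the Fibonacci sequence is nondecreasing, both inequalities hold, so the digit interpretation is valid. Then Theorem~\ref{thm:fibplus}, equation \eqref{e:fibplus}, rewrites $F_{r}^2 + F_{s}^2 = F_{N-i}^2 + F_i^2 = F_{N-(2j+1)}F_{N+1} + F_{2j+1}$ with $j = \min(i,N-i) = \min(r,s) = t$, and since $F_{N-(2t+1)} < F_{N+1}$ and $F_{2t+1} < F_{N+1}$ this is precisely the base-$b$ representation of $\Psi_{+}(\psi_{+}([r,s]))$. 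Second, apply this conjugacy along the whole orbit: by induction on $k$, $S_{b}^{(k)}(\Psi_{+}([0,N])) = \Psi_{+}(\psi_{+}^{(k)}([0,N]))$, which is well-defined because every iterate $\psi_{+}^{(k)}([0,N])$ again lies in ${\mathcal{P}}_{+}(N)$ and hence again has valid Fibonacci digits. Third, invoke Theorem~\ref{thm:psiplus}: the $\psi_{+}$-orbit of $[0,N]$ is a cycle with the stated terminal element, and since $\Psi_{+}$ is injective (the pair $(F_r,F_s)$ recovers $(r,s)$ for $r,s\leq N$, as consecutive Fibonacci indices are distinct except for $F_1=F_2=1$, and one checks $[1,s],[2,s]$ cannot both occur with the parity constraint $r$ even—indeed $r=1$ is excluded), the image cycle is a cycle of the same length with initial element $F_{0}.F_{N}|_b$ and terminal element $\Psi_{+}([n,n-1]) = F_{n}.F_{n-1}|_b$ if $n$ is even, or $\Psi_{+}([n-1,n]) = F_{n-1}.F_{n}|_b$ if $n$ is odd. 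Finally, note $S_{b}(F_{n}.F_{n-1}|_b) = \Psi_{+}(\psi_{+}([n,n-1])) = \Psi_{+}([0,N]) = F_{0}.F_{N}|_b$ (and symmetrically for $n$ odd), confirming the designation of the terminal element in the sense of the Introduction.

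The only genuine subtlety—and the step I expect to need the most care—is the injectivity of $\Psi_{+}$ on the orbit, because of the Fibonacci coincidence $F_1 = F_2 = 1$: a priori $F_1.F_s$ and $F_2.F_s$ denote the same integer. Here the parity constraint built into ${\mathcal{P}}_{+}(N)$ saves the day, since in any pair $[r,s]$ the first coordinate $r$ is even, so $r=1$ never arises and the ambiguous value $F_1$ is only ever a second digit; moreover $F_s = 1$ forces $s\in\{1,2\}$ with $s$ odd, hence $s=1$, so the second coordinate is also pinned down. With that observation the map $[r,s]\mapsto F_r.F_s|_b$ is injective on ${\mathcal{P}}_{+}(N)$, and the transfer of the cycle structure from $\psi_{+}$ to $S_b$ is complete. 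Everything else is bookkeeping: the theorem is, as the paper says, immediate once Theorems~\ref{thm:fibplus} and~\ref{thm:psiplus} and the conjugacy \eqref{eqn:psiplus} are in hand.
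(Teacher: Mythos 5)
Your proposal is correct and takes essentially the same route the paper intends: the paper declares this theorem ``immediate'' from the identity \eqref{e:fibplus}, the conjugacy \eqref{eqn:psiplus}, and the $\psi_{+}$-orbit result of Theorem \ref{thm:psiplus}, which is precisely the reduction you carry out. Your only additions are the digit-validity check and the injectivity of $\Psi_{+}$ (handling $F_1=F_2=1$ via the parity of $r$), details the paper leaves implicit and which you resolve correctly.
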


\section{Fibonacci cycles of type II}\label{s:fibonacci-ii}

The author observed that cycles in base $b=F_{2n}$, $n\geq2$, and initial term $F_{2n+1}=F_{2}.F_{2n-1}|_b$  all have digits in the Fibonacci sequence. See Table \ref{tab:fibcycle-type-II} for some examples. We choose $F_2=1$ so that the sum of the indices of the first term of each cycle is $N=2n+1$. They are a consequence of the following generalization of Lucas's identity \eqref{eqn:lucas}.

\begin{center}
\begin{table}
\begin{tabular}{c*{9}{r@{.}l}}\toprule
Base $b$ & \multicolumn{18}{l}{Fundamental cycle} \\\midrule
$F_{4}$ & $F_{2}$&$F_{3}$\\
$F_{6}$ & $F_{2}$&$F_{5}$, & $F_{4}$&$F_{3}$\\
$F_{8}$ & $F_{2}$&$F_{7}$, &  $F_{6}$&$F_{3}$, &  $F_{4}$&$F_{5}$\\
$F_{10}$ & $F_{2}$&$F_{9}$, &  $F_{8}$&$F_{3}$, &  $F_{6}$&$F_{5}$\\
$F_{12}$ & $F_{2}$&$F_{11}$, &  $F_{10}$&$F_{3}$, &  $F_{8}$&$F_{5}$, &  $F_{4}$&$F_{9}$, &  $F_{6}$&$F_{7}$\\
$F_{14}$ & $F_{2}$&$F_{13}$, &  $F_{12}$&$F_{3}$, &  $F_{10}$&$F_{5}$, &  $F_{6}$&$F_{9}$, &  $F_{4}$&$F_{11}$, &  $F_{8}$&$F_{7}$\\
$F_{16}$ & $F_{2}$&$F_{15}$, &  $F_{14}$&$F_{3}$, &  $F_{12}$&$F_{5}$, &  $F_{8}$&$F_{9}$\\
$F_{18}$ & $F_{2}$&$F_{17}$, &  $F_{16}$&$F_{3}$, &  $F_{14}$&$F_{5}$, &  $F_{10}$&$F_{9}$\\
$F_{20}$ & $F_{2}$&$F_{19}$, &  $F_{18}$&$F_{3}$, &  $F_{16}$&$F_{5}$, &  $F_{12}$&$F_{9}$, &  $F_{4}$&$F_{17}$, &  $F_{14}$&$F_{7}$, &  $F_{8}$&$F_{13}$, &  $F_{6}$&$F_{15}$, & $F_{10}$&$F_{11}$ \\
$F_{22}$ & $F_{2}$&$F_{21}$, &  $F_{20}$&$F_{3}$, &  $F_{18}$&$F_{5}$, &  $F_{14}$&$F_{9}$, &  $F_{6}$&$F_{17}$, &  $F_{12}$&$F_{11}$\\
\bottomrule
\end{tabular}
\caption{\label{tab:fibcycle-type-II} Some fundamental Fibonacci cycles of type II in bases $b=F_{2n}$ with initial terms $F_{2n+1}=F_{2}.F_{2n-1}|_b$. See also Table \ref{tab:minuscycles}.}
\end{table}
\end{center}

\begin{theorem}\label{thm:fibminus}
 Let $N=2n+1$, $n\geq1$. Then
\begin{equation}\label{e:fibminus}
 F_{N-i}^2 + F_i^2 = F_{N-(2j-1)} F_{N-1} + F_{2j-1},
\end{equation}
where $j=\min(i,N-i)$ and $1\leq i \leq n+1$. Observe that $N-(2j-1)$ is always even, and that the indices $N-(2j-1)$ and $2j-1$ sum to $N$. If $i=n+1$, we have Lucas's identity \eqref{eqn:lucas}.
\end{theorem}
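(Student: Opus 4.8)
The plan is to mirror the proof of Theorem~\ref{thm:fibplus} almost verbatim, replacing Catalan's identity~\eqref{eqn:catalan} with d'Ocagne's identity~\eqref{eqn:docagne} and Lucas's identity~\eqref{eqn:lucas} in the appropriate place, and choosing the shift so that the fixed base is $F_{N-1}=F_{2n}$ rather than $F_{N+1}$. First I would set $j=\min(i,N-i)$ as in the statement; since $1\le i\le n+1$ and $N=2n+1$, this $j$ ranges over $1,\dots,n+1$, and in all cases $F_{N-i}^2+F_i^2=F_{N-j}^2+F_j^2$ by the symmetry $i\leftrightarrow N-i$. So it suffices to prove
\begin{equation*}
 F_{N-j}^2 + F_j^2 = F_{N-(2j-1)}F_{N-1} + F_{2j-1},\qquad 1\le j\le n+1.
\end{equation*}

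The key algebraic step is to express $F_{N-j}^2$ in terms of $F_{N-1}$ times something even, plus a correction that combines with $F_j^2$ to give a single Fibonacci number. Writing $N-j=(N-1)-(j-1)$ and applying d'Ocagne's identity~\eqref{eqn:docagne} in the form $F_{2m}=F_{m+1}^2-F_{m-1}^2$ is the natural route: with a suitable choice of $m$ one gets a term $F_{N-1}\cdot(\text{even index Fibonacci})$. Concretely I expect to use Catalan's identity~\eqref{eqn:catalan} with center $N-j$ and radius $j-1$, namely $F_{N-j}^2=F_{N-1}F_{N-2j+1}+(-1)^{N-2j+1}F_{j-1}^2$; since $N-2j+1=2(n-j+1)$ is even, $(-1)^{N-2j+1}=1$, so
\begin{equation*}
 F_{N-j}^2+F_j^2 = F_{N-2j+1}F_{N-1} + F_{j-1}^2 + F_j^2 = F_{N-(2j-1)}F_{N-1}+F_{2j-1},
\end{equation*}
the last equality being Lucas's identity~\eqref{eqn:lucas} applied to $F_{j-1}^2+F_j^2=F_{2j-1}$. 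This also handles the claimed special case: when $i=n+1$ we have $j=\min(n+1,n)=n$, wait---one should check $\min(n+1,N-(n+1))=\min(n+1,n)=n$, giving $F_{n+1}^2+F_n^2=F_{N-2n+1}F_{N-1}+F_{2n-1}=F_1F_{N-1}+F_{2n-1}$; since $F_{N-1}=F_{2n}$ and $F_{2n}+F_{2n-1}=F_{2n+1}=F_N$, this recovers Lucas's identity~\eqref{eqn:lucas} in the degenerate form where the two-digit number $F_{n+1}.F_n|_b$ collapses, as noted in the theorem statement.

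The only real obstacle is bookkeeping with the index ranges and the parity of the exponent $(-1)^{N-2j+1}$: one must confirm $N-2j+1\ge 0$ over the whole range $1\le j\le n+1$ (it equals $2(n-j+1)$, which is $0$ at $j=n+1$ and positive below, so Catalan's identity applies with nonnegative indices throughout) and that it is even so the sign is $+1$. A secondary point is the boundary $j=n+1$, where $F_{j-1}^2=F_n^2$ and the formula still reads correctly after noting $F_0=0$ is not actually needed here. Once the identity~\eqref{e:fibminus} is established, no further work is required for this theorem; the cycle structure it encodes is then extracted in the subsequent development exactly as $\psi_+$ was used in Section~\ref{s:fibonacci-i}.
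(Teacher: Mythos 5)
Your argument is exactly the paper's proof: reduce to $j=\min(i,N-i)$ by symmetry, apply Catalan's identity \eqref{eqn:catalan} with center $N-j$ and radius $j-1$ (sign $+1$ since $N-2j+1$ is even), and finish with Lucas's identity $F_{j-1}^2+F_j^2=F_{2j-1}$. The only blemishes are cosmetic: the passing suggestion that d'Ocagne is needed (it is not, and you do not use it), the claim that $j$ ranges up to $n+1$ (it only reaches $n$, though your identity holds there anyway), and writing $F_1$ where $F_{N-2n+1}=F_2$ is meant (numerically identical).
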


\begin{proof}
Let us assume that $j$ is the smaller of $i$ and $N-i$, $1\leq i \leq n+1$. By Catalan's identity \eqref{eqn:catalan} and Lucas's identity \eqref{eqn:lucas},
\begin{align*}
 F_{N-i}^2 + F_i^2 &= F_{N-j}^2 + F_j^2 \\
 &= F_{(N-1)-(j-1)}^2 + F_j^2 \\
 &= F_{(N-1)-2(j-1)} F_{(N-1)} + (-1)^{(N-1)-2(j-1)}F_{j-1}^2 + F_j^2 \\
 &= F_{N-(2j-1)} F_{N-1} + F_{j-1}^2 + F_j^2 \\
 F_{N-i}^2 + F_i^2 &= F_{N-(2j-1)} F_{N-1} + F_{2j-1}.\qedhere
\end{align*}
\end{proof}

Let ${\mathcal{P}}_{-}(N)$ be the set of all pairs $[r,s]$ such that $r+s=N$ and $s$ is odd, so that $r$ is necessarily even. Note that $|{\mathcal{P}}_{-}(2n+1) |=n$. Define $\psi_{-} : {\mathcal{P}}_{-}(N) \rightarrow {\mathcal{P}}_{-}(N)$ by
$$
\psi_{-}([r,s])=[N-(2t-1),2t-1],\quad t=\min(r,s).
$$
We write $\psi_{-}$ instead of $\psi_{-}^N$ since $N$ is fixed, once chosen.  Note that $\psi_{-}([n,n+1])=[2,N-2]$ if $n$ is even, or $\psi_{-}([n+1,n])=[2,N-2]$ if $n$ is odd. Define the map $\Psi : {\mathcal{P}}_{-}(N) \rightarrow \Z|_{b}$ by $\Psi_{-}([r,s])=F_{r}.F_{s}|_b$, where $b=F_{N-1}$, so that now we require $N\geq5$. Thus, we have
\begin{equation}\label{eqn:psiminus}
 S_{b} (F_{r}.F_{s}) = \Psi_{-}(\psi_{-}([r,s])).
\end{equation}
Thus, we need only determine the cycles and fixed points of $\psi_{-}$ on ${\mathcal{P}}_{-}(N)$ to obtain the cycles and fixed points of $S_{b}$, $b=F_{N-1}$, on $\Z|_{b}$, $N\geq5$.

\begin{theorem}[$\psi_{-}$ orbit]\label{thm:psiminus}
 Let $N=2n+1$, $n\geq1$. If $n=1$ or $n=2$, then $[2,N-2]$ is a fixed point of $\psi_{-}$. If $n\geq3$, then the iterates of $\psi_{-}$ on $[2,N-2]$ comprise a cycle with initial element $[2,N-2]$ and terminal element $[n,n+1]$ if n is even, or terminal element $[n+1,n]$ if $n$ is odd.
\end{theorem}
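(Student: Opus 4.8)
The plan is to mirror the structure of the proof of Theorem \ref{thm:psiplus}, since $\psi_{-}$ has the same combinatorial flavor as $\psi_{+}$, only with the shift by $2$ replacing the shift by $0$ at the bottom of the orbit. First I would establish the two boundary cases directly: if $n=1$, then $N=3$, and $\mathcal{P}_{-}(3)$ consists only of $[2,1]$ (since $r+s=3$, $r$ even, $s$ odd forces $[r,s]=[2,1]$), so $\psi_{-}$ fixes it trivially; if $n=2$, then $N=5$ and $j=\min(2,3)=2$, so $\psi_{-}([2,3])=[5-(2\cdot2-1),2\cdot2-1]=[2,3]$, again a fixed point. This disposes of the degenerate cases where the claimed ``cycle'' collapses.

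For $n\geq3$ the core of the argument is to show that $[2,N-2]$ and the pair $[n,n+1]$ (if $n$ even) or $[n+1,n]$ (if $n$ odd) lie in the same $\psi_{-}$-orbit, and that this orbit is a genuine cycle. The key step is to compute the preimage map: I would verify that $(\psi_{-})^{-1}([r,2s-1]) = [N-s,s]$ when $s$ is odd, and $[s,N-s]$ when $s$ is even, where $r=N-(2s-1)$ — this follows by solving $\min(a,b)=s$ subject to $a+b=N$ and matching parities, exactly as in Theorem \ref{thm:psiplus}. Then, since $j=\min(n,n+1)=n$ gives $\psi_{-}([n,n+1]) = [N-(2n-1),2n-1] = [2,N-2]$ (using $N=2n+1$), and likewise $\psi_{-}([n+1,n])=[2,N-2]$, the pair $[n,n+1]$ or $[n+1,n]$ maps directly to $[2,N-2]$ under one application of $\psi_{-}$. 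Because $\mathcal{P}_{-}(N)$ is finite and every iterate of $\psi_{-}$ on $[2,N-2]$ stays in it, the forward orbit of $[2,N-2]$ is eventually periodic; to see it is purely periodic with $[2,N-2]$ on the cycle, it suffices to exhibit $[2,N-2]$ as a $\psi_{-}$-image, which the preimage computation provides (take $r=2$, i.e. $2s-1=N-2$, so $s=n$; then the preimage is $[n+1,n]$ or $[n,n+1]$ depending on the parity of $n$). Hence $[2,N-2]$ is genuinely in a cycle, and the displayed pair is its terminal element.

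The main obstacle I anticipate is purely bookkeeping: keeping the parity conditions straight so that at each step the ``$s$ odd'' component of $[r,s]$ stays odd and $r$ stays even, and confirming that the preimage formula produces an element that actually lies in $\mathcal{P}_{-}(N)$ rather than stepping outside the index range $1\leq i\leq n+1$ implicit in Theorem \ref{thm:fibminus}. There is also a small subtlety that, unlike the type I case, here one must check $[2,N-2]$ is not itself already the bottom of a shorter trivial loop when $n\geq3$ — but since $\psi_{-}([2,N-2])$ has first coordinate $N-(2\cdot2-1)=N-3=2n-2\neq 2$ for $n\geq3$, no collapse occurs, and the orbit has length at least two as required. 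Once these parity and range checks are made, the statement follows formally just as Theorem \ref{thm:psiplus} does, and the translation to the Fibonacci cycle of type II via \eqref{eqn:psiminus} and \eqref{e:fibminus} is then immediate.
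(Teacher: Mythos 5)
Your proposal follows essentially the same route as the paper's proof: the paper's argument consists precisely of the preimage formula $(\psi_{-})^{-1}([r,2s-1])=[N-s,s]$ for $s$ odd and $[s,N-s]$ for $s$ even, together with the computation $\psi_{-}([n,n+1])=[2,N-2]$ ($n$ even) and $\psi_{-}([n+1,n])=[2,N-2]$ ($n$ odd), from which it concludes that $[2,N-2]$ and the claimed terminal pair lie in the same $\psi_{-}$-orbit. Your explicit verification of the degenerate cases $n=1,2$ and your check that $\psi_{-}([2,N-2])=[2n-2,3]\neq[2,N-2]$ for $n\geq3$ (so the orbit has length at least two) are correct additions consistent with the statement.

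One inference, however, is not valid as you state it: exhibiting $[2,N-2]$ as a $\psi_{-}$-image does not by itself show that $[2,N-2]$ lies on a cycle. On a finite set a map such as $a\mapsto b\mapsto c\mapsto c$ has $b$ in the image, yet $b$ is strictly pre-periodic. What actually closes the argument is that your preimage computation assigns to \emph{every} element of $\mathcal{P}_{-}(N)$ exactly one preimage: each element has the form $[N-(2s-1),2s-1]$ with $1\leq s\leq n$, and of the two candidate preimages $[N-s,s]$, $[s,N-s]$ exactly one has the required parities (first coordinate even, second odd). Hence $\psi_{-}$ is a bijection of the finite set $\mathcal{P}_{-}(N)$, so every orbit is purely periodic, and the unique preimage $[n,n+1]$ (resp.\ $[n+1,n]$) of $[2,N-2]$ is forced to be the terminal element of the cycle through $[2,N-2]$. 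This is a one-sentence repair that your own preimage formula already supplies — and, to be fair, the paper's proof leaves the same permutation property implicit (it is also what underlies Corollary \ref{cor:psiminus}) — but it should be stated, since as written your criterion ``has a preimage, hence lies on a cycle'' is the one step that would fail in general.
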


\begin{proof}
 Observe that $(\psi_{-})^{-1}([r,2s-1])=[N-s,s]$ if $s$ is odd, and $[s,N-s]$ if $s$ is even, where $r=N-(2s-1)$. Further, since $\psi_{-}([n,n+1])=[2,N-2]$ if $n$ is even, and $\psi_{-}([n+1,n])=[2,N-2]$ if $n$ is odd, we have that $[2,N-2]$ and $[n,n+1]$, $n$ even, or $[n+1,n]$, $n$ odd, are in the same $\psi_{-}$-orbit.
\end{proof}

Let us call the $\psi_{-}$-cycle generated by $[2,N-2]$ the \emph{fundamental cycle} of $\psi_{-}$, since it occurs for every positive integer $n\geq1$. It is only meaningful for Fibonacci cycles when $n\geq2$, however. See Table \ref{tab:fibcycle-type-II} for some Fibonacci cycles, see Table \ref{tab:minuscycles} for some cycles and fixed points of $\psi_{-}$ and see Section \ref{s:fixedpoints} on fixed points.

\begin{center}
\begin{table}
\begin{tabular}{rl}\toprule
$N$ & Cycles and fixed points \\\midrule
$3$ & [[2, 1]] \\
$5$ & [[2, 3]] \\
   & [[4, 1]] \\
$7$ & [[2, 5], [4, 3]] \\
   & [[6, 1]] \\
$9$ & [[2, 7], [6, 3], [4, 5]] \\
   & [[8, 1]] \\
$11$ & [[2, 9], [8, 3], [6, 5]] \\
   & [[4, 7]] \\
   & [[10, 1]] \\
$13$ & [[2, 11], [10, 3], [8, 5], [4, 9], [6, 7]] \\
   & [[12, 1]] \\
$15$ & [[2, 13], [12, 3], [10, 5], [6, 9], [4, 11], [8, 7]] \\
   & [[14, 1]] \\
$17$ & [[2, 15], [14, 3], [12, 5], [8, 9]] \\
   & [[4, 13], [10, 7]] \\
   & [[6, 11]] \\
   & [[16, 1]] \\
$19$ & [[2, 17], [16, 3], [14, 5], [10, 9]] \\
   & [[4, 15], [12, 7], [6, 13], [8, 11]] \\
   & [[18, 1]] \\
$21$ & [[2, 19], [18, 3], [16, 5], [12, 9], [4, 17], [14, 7], [8, 13], [6, 15], [10, 11]] \\
   & [[20, 1]] \\
$23$ & [[2, 21], [20, 3], [18, 5], [14, 9], [6, 17], [12, 11]] \\
   & [[4, 19], [16, 7], [10, 13]] \\
   & [[8, 15]] \\
   & [[22, 1]] \\\bottomrule
\end{tabular}
\caption{\label{tab:minuscycles} The cycles and fixed  points of type II are obtained by the iterates of $\psi_{-}$ on $\mathcal{P}_{-}(N)$, $N=2n+1$, $n\geq1$. The iteration applies to Fibonacci cycles only when $n\geq2$, however.}
\end{table}
\end{center}

\begin{corollary}\label{cor:psiminus}
  Every element of ${\mathcal{P}}_{-}(N)$, $N=2n+1$, $n\geq1$, is either a fixed point or in a cycle of $\psi_{-}$.
\end{corollary}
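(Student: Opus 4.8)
The plan is to mirror the argument already used for $\psi_{+}$ (the corollary following Theorem~\ref{thm:psiplus}) and reduce the statement to the elementary fact that a bijection of a finite set decomposes into disjoint cyclic permutations. First I would record the two structural facts we need about $\psi_{-}$. One is that $\mathcal{P}_{-}(N)$ is finite; indeed $|\mathcal{P}_{-}(2n+1)|=n$, as noted in the text. The other is that $\psi_{-}$ really is a self-map of $\mathcal{P}_{-}(N)$: the image $[N-(2t-1),2t-1]$ with $t=\min(r,s)$ again has odd second coordinate, and since both coordinates of any $[r,s]\in\mathcal{P}_{-}(N)$ are positive we have $t\geq1$, so $N-(2t-1)\leq N-1$ is even and nonnegative, whence $[N-(2t-1),2t-1]\in\mathcal{P}_{-}(N)$.

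Next I would invoke injectivity of $\psi_{-}$. This is precisely what the proof of Theorem~\ref{thm:psiminus} supplies: the explicit formula $(\psi_{-})^{-1}([r,2s-1])=[N-s,s]$ when $s$ is odd and $[s,N-s]$ when $s$ is even (with $r=N-(2s-1)$) exhibits a two-sided inverse, so $\psi_{-}$ is a bijection of $\mathcal{P}_{-}(N)$. A bijection of a finite set is a permutation, hence a disjoint union of cyclic permutations; therefore, for any $z\in\mathcal{P}_{-}(N)$ the sequence $z,\psi_{-}(z),\psi_{-}^{(2)}(z),\dots$ eventually returns to $z$, i.e.\ there is a least $k\geq1$ with $\psi_{-}^{(k)}(z)=z$. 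If $k=1$ then $z$ is a fixed point; if $k\geq2$ then $z,\psi_{-}(z),\dots,\psi_{-}^{(k-1)}(z)$ is a cycle containing $z$. Either way the claim holds. If one prefers to avoid citing bijectivity as a black box, the same conclusion follows directly from finiteness: by pigeonhole there are $0\leq i<j$ with $\psi_{-}^{(i)}(z)=\psi_{-}^{(j)}(z)$, and applying $(\psi_{-})^{-1}$ exactly $i$ times gives $z=\psi_{-}^{(j-i)}(z)$, so $z$ is periodic. I would use whichever phrasing matches the (truncated) proof given for $\psi_{+}$.

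The only point requiring care is the verification that the stated formula actually inverts $\psi_{-}$ on every element of $\mathcal{P}_{-}(N)$ — that is, the parity bookkeeping together with the behaviour of $\min$, so that $\psi_{-}\big((\psi_{-})^{-1}([r,s])\big)=[r,s]$ and conversely. This is routine and is essentially already carried out in the proof of Theorem~\ref{thm:psiminus}, so I do not expect a genuine obstacle; the resulting proof is only a few lines.
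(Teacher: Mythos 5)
Your proposal is correct and follows essentially the same route as the paper, whose own proof is only a sketch (``consider the iterates $\{\psi_{-}^{(k)}(z)\}$''): finiteness of $\mathcal{P}_{-}(N)$ plus the invertibility of $\psi_{-}$ established in the proof of Theorem \ref{thm:psiminus} forces every orbit to be periodic. Your write-up simply makes explicit the pigeonhole/bijectivity details that the paper leaves implicit.
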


\begin{proof}
 Let $z$ be an element of ${\mathcal{P}}_{-}(N)$. If $z$ is a fixed point, then we are done. Suppose $z$ is not a fixed point and consider the iterates $\{ \psi_{-}^{(k)}(z) \}$ of $z$.
\end{proof}

The following two theorems are immediate.

\begin{theorem}\label{thm:fullfibminus}
 All fixed points and cycles of $\psi_{-}$ on ${\mathcal{P}}_{-}(N)$, $N=2n+1$, $n\geq2$, generate via \eqref{e:fibminus} Fibonacci fixed points and Fibonacci cycles.
\end{theorem}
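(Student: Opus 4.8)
The plan is to observe that this is essentially a restatement of Theorem \ref{thm:psiminus} transported through the correspondence \eqref{eqn:psiminus}, combined with identity \eqref{e:fibminus} from Theorem \ref{thm:fibminus}. First I would fix $N=2n+1$ with $n\geq2$ so that $b=F_{N-1}=F_{2n}$ is a legitimate base (at least $F_4=3>2$), and recall that $\Psi_{-}([r,s])=F_r.F_s|_b$ gives a map ${\mathcal{P}}_{-}(N)\to\Z|_b$ whose defining property is the commutation relation \eqref{eqn:psiminus}, namely $S_b(F_r.F_s|_b)=\Psi_{-}(\psi_{-}([r,s]))$. The content of that relation is precisely identity \eqref{e:fibminus}: given $[r,s]\in{\mathcal{P}}_{-}(N)$ with $s$ odd, setting $j=\min(r,s)$ one has $F_r^2+F_s^2=F_{N-(2j-1)}F_{N-1}+F_{2j-1}$, and since $0\le F_{2j-1}<F_{N-1}=b$ (because $2j-1\le N-2$) this is exactly the base-$b$ digit expansion of $S_b(F_r.F_s|_b)$, so the two-digit number it produces is $F_{N-(2j-1)}.F_{2j-1}|_b=\Psi_{-}(\psi_{-}([r,s]))$.

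Next I would invoke Theorem \ref{thm:psiminus}: for $n\geq2$ we are in the case $n\geq3$ unless $n=2$. Actually the statement to be proved concerns $n\geq2$, so I should handle $n=2$ separately — there $N=5$, $b=F_4=3$, and $[2,3]$ is a fixed point of $\psi_{-}$, giving the fixed point $F_2.F_3|_3 = 1\cdot3+2=5$ of $S_3$ (and indeed $1^2+2^2=5$), which is a degenerate "cycle"; this matches the convention that a fixed point is a length-one cycle, though the theorem as phrased (Fibonacci \emph{cycles}) may intend $n\geq3$. For $n\geq3$, Theorem \ref{thm:psiminus} states that the $\psi_{-}$-orbit of $[2,N-2]$ is a genuine cycle with initial element $[2,N-2]$ and terminal element $[n,n+1]$ if $n$ is even, $[n+1,n]$ if $n$ is odd. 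Applying $\Psi_{-}$ elementwise and using \eqref{eqn:psiminus} repeatedly, the $S_b$-orbit of $\Psi_{-}([2,N-2])=F_2.F_{N-2}|_b$ is the image cycle, with terminal element $\Psi_{-}([n,n+1])=F_n.F_{n+1}|_b$ ($n$ even) or $\Psi_{-}([n+1,n])=F_{n+1}.F_n|_b$ ($n$ odd); the indices sum to $N$ throughout. One must also check that $\Psi_{-}$ is injective on the orbit so that distinct $\psi_{-}$-iterates give distinct $S_b$-iterates (ensuring the image is a cycle of the same length, not a shorter one); injectivity of $\Psi_{-}$ on pairs $[r,s]$ with $r+s=N$ fixed follows because $r,s<N-1=b$ except for the pair $[0,N]$ which does not lie in ${\mathcal{P}}_{-}(N)$ for this purpose — and in any case the pair $[2,N-2]$ and its forward iterates have both coordinates strictly between $0$ and $b$, so the base-$b$ representation is unambiguous.

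The main obstacle I anticipate is not conceptual but bookkeeping: verifying that the digit bound $0\le F_{2j-1}<b=F_{N-1}$ genuinely holds for all pairs arising in the orbit (so that \eqref{e:fibminus} really is a digit expansion and not merely an arithmetic identity), and confirming the parity/index arithmetic that sends the terminal $\psi_{-}$-element to the claimed terminal Fibonacci pair via Lucas's identity \eqref{eqn:lucas}. Once those bounds are in hand, the proof is a one-line transport: apply $\Psi_{-}$ to the $\psi_{-}$-cycle of Theorem \ref{thm:psiminus} and read off the result through \eqref{eqn:psiminus}. I would present it as: ``By Theorem \ref{thm:psiminus} the iterates of $\psi_{-}$ on $[2,N-2]$ form a cycle with the stated terminal element; applying $\Psi_{-}$ and \eqref{eqn:psiminus} term by term, and noting that each $F_{2j-1}<F_{N-1}=b$ so that \eqref{e:fibminus} is the base-$b$ expansion of $S_b$, yields the claimed Fibonacci cycle of type II.''
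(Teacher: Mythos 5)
Your transport mechanism --- reading identity \eqref{e:fibminus} as the base-$b$ digit expansion of $S_b(F_r.F_s|_b)$, i.e.\ the commutation relation \eqref{eqn:psiminus}, and pushing $\psi_{-}$-orbits forward through $\Psi_{-}$ --- is exactly what the paper intends (it gives no proof, calling the theorem immediate from that setup). But as written you prove the wrong statement. Theorem \ref{thm:fullfibminus} asserts that \emph{every} fixed point and \emph{every} cycle of $\psi_{-}$ on ${\mathcal{P}}_{-}(N)$ transports to a Fibonacci fixed point or cycle of $S_b$; the fundamental orbit of $[2,N-2]$, which is all your second paragraph treats via Theorem \ref{thm:psiminus} (plus the $n=2$ case), is the content of the \emph{next} theorem, Theorem \ref{thm:fibminuscycle}. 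For $N=17$, say, the cycle $[4,13],[10,7]$ and the fixed point $[6,11]$ fall under \ref{thm:fullfibminus} and are untouched by your argument. The repair is cheap, since your first paragraph already contains the general mechanism: combine it with Corollary \ref{cor:psiminus} (every element of ${\mathcal{P}}_{-}(N)$ is a fixed point or lies in a cycle) and apply $\Psi_{-}$ to an arbitrary $\psi_{-}$-orbit, not just the one starting at $[2,N-2]$.

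The bookkeeping you defer also hides a genuine edge case. Digit validity requires both $F_{2j-1}<b$ and $F_{N-(2j-1)}<b$, i.e.\ $N-(2j-1)\le N-2$, which forces $j\ge 2$; your blanket claim that $r,s<N-1$ for pairs in ${\mathcal{P}}_{-}(N)$ is false, because $[2n,1]=[N-1,1]$ belongs to ${\mathcal{P}}_{-}(N)$ and is an isolated fixed point of $\psi_{-}$ (Theorem \ref{thm:fixed-points-minus}). There $\Psi_{-}([2n,1])=F_{2n}.F_{1}$ has leading ``digit'' $F_{2n}=b$, so it is not a two-digit numeral and is not fixed by $S_b$: for $N=5$, $b=F_4=3$, one gets $F_4\cdot 3+F_1=10$, whose base-$3$ digits are $1,0,1$, and $S_3(10)=2\neq 10$. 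So this pair must be excluded (a blemish in the theorem as literally stated, but one your proof should confront rather than assert away). For every other fixed point and for every element of a cycle the problem disappears: each such pair is the image $\psi_{-}([r,s])$ of a pair with $\min(r,s)\ge 2$, hence has both indices in $\{2,\dots,N-2\}$, so both digits are genuine, $\Psi_{-}$ is injective on such pairs, and the transport via \eqref{eqn:psiminus} yields the claimed Fibonacci fixed points and cycles.
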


\begin{theorem}[Fibonacci cycles of type II]\label{thm:fibminuscycle}
 The iterates of $S_b$, $b=F_{2n}$, $n\geq2$, on $F_2.F_{2n-1}|_b$ comprise a cycle with initial element $F_2.F_{2n-1}|_b$ and terminal element $F_{n}.F_{n+1}|_b$ if $n$ is even, or terminal element $F_{n+1}.F_{n}|_b$ if $n$ is odd.
\end{theorem}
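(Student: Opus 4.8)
The plan is to deduce the statement directly from the semiconjugacy \eqref{eqn:psiminus} together with the orbit description of $\psi_-$ in Theorem~\ref{thm:psiminus}. Put $N = 2n+1$, so that $b = F_{N-1} = F_{2n}$ and the proposed initial element is $F_2.F_{2n-1}|_b = F_2.F_{N-2}|_b = \Psi_-([2,N-2])$; this makes sense because $2 + (N-2) = N$ and $N-2 = 2n-1$ is odd, so $[2,N-2] \in \mathcal{P}_-(N)$.

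First I would record the digit bounds that make $\Psi_-$ a faithful coding. Every pair $[r,s]$ occurring in the fundamental $\psi_-$-cycle through $[2,N-2]$ has $0 \le r,s \le N-2 = 2n-1$, hence $F_r,F_s \le F_{2n-1} < F_{2n} = b$, so $F_r.F_s|_b$ is a genuine two-digit base-$b$ numeral whose digits are exactly $F_r$ and $F_s$; consequently $\Psi_-$ is injective on the pairs that matter. (The only element of $\mathcal{P}_-(N)$ with first coordinate $2n$ is $[2n,1]$, which is a $\psi_-$-fixed point disjoint from the fundamental cycle — indeed $s=1$ is reachable under $\psi_-$ only from $s=1$ — so it never interferes.) Then \eqref{eqn:psiminus}, which is merely a repackaging of identity \eqref{e:fibminus} from Theorem~\ref{thm:fibminus}, gives $S_b(\Psi_-([r,s])) = \Psi_-(\psi_-([r,s]))$ for all such pairs, so the forward $S_b$-orbit of $F_2.F_{2n-1}|_b$ is the $\Psi_-$-image of the forward $\psi_-$-orbit of $[2,N-2]$.

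Next, for $n \ge 3$, Theorem~\ref{thm:psiminus} says this $\psi_-$-orbit is a cycle with terminal element $[n,n+1]$ when $n$ is even and $[n+1,n]$ when $n$ is odd. Since $\Psi_-$ is injective on these pairs, the $S_b$-orbit is a cycle of the same length with terminal element $\Psi_-([n,n+1]) = F_n.F_{n+1}|_b$ if $n$ is even, or $\Psi_-([n+1,n]) = F_{n+1}.F_n|_b$ if $n$ is odd, which is the claim; note $F_{n+1} < F_{2n} = b$ because $n \ge 2$. The boundary case $n = 2$ ($N = 5$, $b = F_4 = 3$) is degenerate: Theorem~\ref{thm:psiminus} gives that $[2,3]$ is a $\psi_-$-fixed point, so $F_2.F_3|_3 = 5$ is fixed by $S_3$, which is consistent with the stated terminal element since for this even value the claimed terminal $F_2.F_3|_b$ coincides with the initial element.

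The genuinely substantive work has already been carried out in Theorems~\ref{thm:fibminus} and \ref{thm:psiminus}; the only point requiring care in the present argument is the bookkeeping that turns the abstract $\psi_-$-cycle into an honest $S_b$-cycle of two-digit numerals — that is, verifying that no step of the cycle produces a digit $\ge b$ and that $\Psi_-$ does not collapse distinct pairs. That is exactly the step I would spell out in detail, after which the theorem follows immediately.
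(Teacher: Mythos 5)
Your proposal is correct and follows essentially the same route as the paper, which declares the theorem ``immediate'' from the semiconjugacy \eqref{eqn:psiminus} and the $\psi_{-}$-orbit description of Theorem \ref{thm:psiminus}. The only difference is that you make explicit the bookkeeping the paper leaves tacit (that the fundamental cycle avoids the isolated pair $[2n,1]$, so all digits $F_r,F_s$ satisfy $F_r,F_s\leq F_{2n-1}<b$ and $\Psi_{-}$ is injective there, plus the degenerate case $n=2$), which is a sound and welcome elaboration rather than a different argument.
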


\section{Fixed points}\label{s:fixedpoints}

\begin{definition}[Isolated fixed point]
 A fixed point with no preimage will be called an \emph{isolated fixed point}.
\end{definition}

\begin{theorem}[Isolated fixed points of $\psi_{-}$]\label{thm:fixed-points-minus} Assume $n\geq1$.
 \begin{enumerate}
 \item The pair $[2n,4n-1]$ is an isolated fixed point of $\psi_{-}$.

 \item The pair $[2n,1]$ is an isolated fixed point of $\psi_{-}$.
\end{enumerate}
\end{theorem}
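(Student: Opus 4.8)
The plan is to analyze the map $\psi_{-}$ on ${\mathcal{P}}_{-}(N)$ directly, recalling that $\psi_{-}([r,s]) = [N-(2t-1),\,2t-1]$ with $t = \min(r,s)$, where here $N = r+s$. For part (b), with $[r,s] = [2n,1]$ we have $N = 2n+1$ and $t = \min(2n,1) = 1$ (for $n \geq 1$), so $\psi_{-}([2n,1]) = [N-(2\cdot 1 - 1),\,2\cdot 1 - 1] = [N-1,1] = [2n,1]$; hence it is a fixed point. For part (a), with $[r,s] = [2n,4n-1]$ we have $N = 6n-1$ and $t = \min(2n,4n-1) = 2n$ (again for $n\geq 1$), so $\psi_{-}([2n,4n-1]) = [N - (4n-1),\,4n-1] = [6n-1-4n+1,\,4n-1] = [2n,4n-1]$; hence it is a fixed point. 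So the "fixed point" assertions are immediate from the formula for $\psi_{-}$.

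The substantive content is the word \emph{isolated}, i.e. that neither pair has a preimage other than itself. Here I would use the preimage description established in the proof of Theorem \ref{thm:psiminus}: $(\psi_{-})^{-1}([r,2s-1]) = [N-s,s]$ if $s$ is odd and $[s,N-s]$ if $s$ is even, where $r = N-(2s-1)$; and one must also check that a preimage is only of the stated form when the second coordinate of the target is in fact of the form $2s-1$ with $s = \min$ of the two coordinates of the preimage. For part (b), the target is $[2n,1]$, so $2s-1 = 1$ forces $s = 1$, which is odd, giving the candidate preimage $[N-1,1] = [2n,1]$ itself; but one must verify $\min(2n,1) = 1 = s$, which holds, and that no other pair $[r',s']$ with $\min(r',s') = t'$ maps to $[2n,1]$ — this needs $2t'-1 = 1$, i.e. $t' = 1$, and $N - (2t'-1) = 2n$, forcing the preimage to be a pair summing to $2n+1$ with minimum $1$, i.e. $[2n,1]$. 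For part (a), the target is $[2n,4n-1]$; writing $4n-1 = 2s-1$ gives $s = 2n$, which is even, so the candidate preimage is $[s,N-s] = [2n,4n-1]$ itself, and again one checks $\min(2n,4n-1) = 2n = s$ and that any preimage $[r',s']$ must satisfy $2\min(r',s') - 1 = 4n-1$ and $(6n-1) - (4n-1) = 2n = $ the other coordinate, forcing $[r',s'] = [2n,4n-1]$.

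The main obstacle — really the only place care is needed — is handling the case distinction in the definition of $\psi_{-}$ cleanly: when $r = s$ (or near-equal) the value of $t = \min(r,s)$ and the resulting classification of preimages by parity of $s$ need to be checked to make sure the only solution of the preimage equations is the point itself, and one should confirm the edge cases $n=1$ (where $[2n,1] = [2,1]$ and $[2n,4n-1] = [2,3]$, matching the $N=3$ and $N=5$ rows of Table \ref{tab:minuscycles}, where indeed both appear as singletons). I would organize the write-up as: (1) verify each pair lies in ${\mathcal{P}}_{-}(N)$ for the appropriate $N$; (2) compute $\psi_{-}$ of each and conclude it is fixed; (3) solve $\psi_{-}([r,s]) = [2n,1]$ resp.\ $[2n,4n-1]$ for $[r,s] \in {\mathcal{P}}_{-}(N)$ and show the only solution is the point itself, using the parity-of-$s$ split from the proof of Theorem \ref{thm:psiminus}; (4) note the $n=1$ base cases against the table. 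Steps (1), (2), (4) are one-line checks; step (3) is the content, and it is a short finite computation once the preimage formula is in hand.
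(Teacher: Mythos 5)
Your proposal is correct and follows essentially the same route as the paper: verify directly that each pair is fixed by computing $\psi_{-}$, then show isolation by noting that any preimage $[x,y]$ must have $2\min(x,y)-1$ equal to the odd coordinate of the target, after which the parity constraints ($x$ even, $y$ odd) force the preimage to be the pair itself. The only cosmetic difference is that you invoke the preimage formula from the proof of Theorem \ref{thm:psiminus}, while the paper redoes that parity case analysis inline; the content is identical.
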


\begin{proof}
 Suppose $[2n,N-2n]$ is a fixed point of $\psi_{-}$ and $2n=\min(2n,N-2n)$. Then $[2n,N-2n]=[N-(4n-1),4n-1]$ yields $N=6n-1$ and $[2n,4n-1]$ is the fixed point.

 Suppose $[2n,N-2n]$ is a fixed point of $\psi_{-}$ and $N-2n=\min(2n,N-2n)$. Then $[2n,N-2n]=[N-(2(N-2n)-1),2(N-2n)-1]$ yields $N=2n+1$ and $[2n,1]$ is the fixed point.

 If $\psi_{-}([x,y])=[2n,4n-1]$ and $x=\min(x,y)$, then $2x-1=4n-1$ yields $x=2n$. If $\psi_{-}([x,y])=[2n,4n-1]$ and $y=\min(x,y)$, then $2y-1=4n-1$ yields $y=2n$, but $y$ is odd so this case does not occur.

 If $\psi_{-}([x,y])=[2n,1]$, then $2y-1=1$ yields $y=1$. If $x=\min(x,y)$, then $2x-1=1$ yields $x=1$, but $x$ is even so this case does not occur.
\end{proof}

The following theorem is proven similarly.

\begin{theorem}[Isolated fixed point of $\psi_{+}$]\label{thm:fixed-points-plus}
 Assume $n\geq1$. The pair $[2n,4n+1]$ is an isolated fixed point of $\psi_{+}$.
\end{theorem}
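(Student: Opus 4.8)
The plan is to mirror the proof of Theorem~\ref{thm:fixed-points-minus} exactly, replacing $\psi_{-}$ by $\psi_{+}$ and the recursion $t\mapsto 2t-1$ by $t\mapsto 2t+1$. Recall $\psi_{+}([r,s])=[N-(2t+1),2t+1]$ with $t=\min(r,s)$ and $r+s=N=2n-1$ (so here I must be careful: the ambient $N$ for $\psi_{+}$ is odd, and the ``$n$'' in the statement $[2n,4n+1]$ is a fresh parameter, not the $n$ defining $N$). First I would suppose $[2m,N-2m]$ is a fixed point of $\psi_{+}$, splitting into the two cases according to whether $2m$ or $N-2m$ is the minimum, and solve the resulting index equations to force the value of $N$ and identify the fixed pair.

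In the first case, $2m=\min(2m,N-2m)$, the fixed-point condition $[2m,N-2m]=[N-(4m+1),4m+1]$ gives $N-2m=4m+1$, hence $N=6m+1$, and the fixed point is $[2m,4m+1]$; writing $m$ as $n$ recovers the claimed pair. In the second case, $N-2m=\min(2m,N-2m)$, the condition $[2m,N-2m]=[N-(2(N-2m)+1),2(N-2m)+1]$ gives $2m=2(N-2m)+1$, which is impossible since the left side is even and the right side is odd; so this case does not occur, and $[2n,4n+1]$ (with $N=6n+1$) is the unique fixed point of $\psi_{+}$ of the form $[2m,\,\cdot\,]$.

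For the isolation claim I would compute preimages: if $\psi_{+}([x,y])=[2n,4n+1]$, then the second coordinate equation $2t+1=4n+1$ with $t=\min(x,y)$ forces $\min(x,y)=2n$. If $x=\min(x,y)$ this gives $x=2n$, but then $y=N-x=6n+1-2n=4n+1$, so $[x,y]=[2n,4n+1]$ itself, which is not a genuine preimage (a preimage must be distinct for the fixed point to have a cycle predecessor). If instead $y=\min(x,y)$, then $y=2n$ is even, contradicting that $s$ must be odd in ${\mathcal{P}}_{+}(N)$; so this case does not occur. Hence $[2n,4n+1]$ has no preimage other than itself, i.e.\ it is isolated.

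I do not anticipate a genuine obstacle here: the argument is a direct transcription of the even/odd parity bookkeeping already carried out for $\psi_{-}$. The one point requiring care is notational disambiguation --- the parameter $n$ in the theorem statement is unrelated to the $n$ with $N=2n-1$ that appears in the definition of $\psi_{+}$ --- so in the write-up I would either rename the latter or simply phrase everything in terms of $N$, noting at the end that $N=6n+1$ is forced, which indeed has the form $2n'-1$ with $n'=3n+1\geq2$, so that $F_{N+1}=F_{6n+2}$ is a legitimate base and the pair translates via \eqref{e:fibplus} to the Fibonacci fixed point $F_{2n}.F_{4n+1}|_{b}$ recorded in Subsection~\ref{intro-fixed}.
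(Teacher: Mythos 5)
Your proposal is exactly the paper's intended argument: the paper proves Theorem \ref{thm:fixed-points-plus} by saying it is ``proven similarly'' to Theorem \ref{thm:fixed-points-minus}, and your write-up is precisely that transcription, with the two essential ingredients (the fixed-point verification giving $N=6n+1$ and the preimage analysis using that the second coordinate is odd) carried out correctly. One small slip worth fixing: in your second case the coordinate equations are $2m=N-\bigl(2(N-2m)+1\bigr)$ and $N-2m=2(N-2m)+1$, each forcing $N-2m=-1$, which is the actual contradiction; the equation $2m=2(N-2m)+1$ you rule out by parity is not one of the coordinate equations --- though this case is only needed for the uniqueness side-remark (showing $\psi_{+}$ has no analogue of the fixed point $[2n,1]$ of $\psi_{-}$), not for the isolation claim of the theorem itself.
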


Theorems \ref{thm:fixed-points-minus} and \ref{thm:fixed-points-plus} combine to give us the following theorem.

\begin{theorem}[Fixed points in bases $F_{6n-2}$ and $F_{6n+2}$]\label{thm:fixed-points-fib} Assume $n\geq1$.

\begin{enumerate}
  \item \label{thm:fib-fixed-points-i} The number $F_{2n}.F_{4n-1}|_b$ is an isolated fixed point in base $b=F_{6n-2}$.

  \item \label{thm:fib-fixed-points-ii}The number $F_{2n}.F_{4n+1}|_b$ is an isolated fixed point in base $b=F_{6n+2}$.
\end{enumerate}
\end{theorem}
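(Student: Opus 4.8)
The plan is to derive Theorem \ref{thm:fixed-points-fib} directly from the two $\psi$-level results, Theorems \ref{thm:fixed-points-minus}(a) and \ref{thm:fixed-points-plus}, by pushing them through the dictionary maps $\Psi_{-}$ and $\Psi_{+}$ established in \eqref{eqn:psiminus} and \eqref{eqn:psiplus}. Recall that for $N=2n+1$ (type II) the base is $b=F_{N-1}$ and $S_b(F_r.F_s)=\Psi_{-}(\psi_{-}([r,s]))$, while for $N=2n-1$ (type I) the base is $b=F_{N+1}$ and $S_b(F_r.F_s)=\Psi_{+}(\psi_{+}([r,s]))$. So a fixed point of $\psi_{\pm}$ in $\mathcal{P}_{\pm}(N)$ translates immediately into a fixed point of the corresponding $S_b$.

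For part \eqref{thm:fib-fixed-points-i}: by Theorem \ref{thm:fixed-points-minus}(a), the pair $[2n,4n-1]$ is a fixed point of $\psi_{-}$; this lives in $\mathcal{P}_{-}(N)$ with $N=2n+(4n-1)=6n-1$, so that $b=F_{N-1}=F_{6n-2}$. Applying $\Psi_{-}$ and \eqref{eqn:psiminus} gives $S_b(F_{2n}.F_{4n-1}|_b)=\Psi_{-}(\psi_{-}([2n,4n-1]))=\Psi_{-}([2n,4n-1])=F_{2n}.F_{4n-1}|_b$, so $F_{2n}.F_{4n-1}|_b$ is a fixed point of $S_b$. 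One must also check this is a genuine two-digit numeral in base $b=F_{6n-2}$, i.e. that $F_{2n}<F_{6n-2}$ and $F_{4n-1}<F_{6n-2}$; since $n\geq1$ forces $4n-1\le 6n-3<6n-2$ and $2n<6n-2$, monotonicity of $(F_m)$ gives both strict inequalities, so the representation is legitimate. For isolatedness, Theorem \ref{thm:fixed-points-minus}(a) states $[2n,4n-1]$ has no $\psi_{-}$-preimage; since \eqref{eqn:psiminus} makes $S_b$ restricted to numerals $F_r.F_s|_b$ conjugate to $\psi_{-}$, and since Beardon's Theorem \ref{thm:fixedpoints} (complete the square: $(2x-b)^2+(2y-1)^2=1+b^2$) pins fixed points to exactly the lattice points $(F_{2n},F_{4n-1})$ and $(F_{4n-1},F_{2n})$-type solutions of that conic, any $S_b$-preimage of $F_{2n}.F_{4n-1}|_b$ would have to be a two-digit numeral whose digits lie among the Fibonacci values appearing in $\mathcal{P}_{-}(N)$, hence correspond to a $\psi_{-}$-preimage of $[2n,4n-1]$, contradiction. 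Thus the fixed point is isolated. Part \eqref{thm:fib-fixed-points-ii} is identical with $\psi_{+}$, $[2n,4n+1]$, $N=6n+1$, $b=F_{N+1}=F_{6n+2}$, using Theorem \ref{thm:fixed-points-plus} and \eqref{eqn:psiplus}.

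The routine inequalities above are immediate, so the only delicate point is the isolatedness claim: one must be sure that an arbitrary preimage of $F_{2n}.F_{4n-1}|_b$ under $S_b$ (a priori just \emph{some} positive integer, not obviously a Fibonacci numeral) is in fact forced into the conjugacy picture. The clean way is to invoke Beardon's machinery directly: by Theorem \ref{thm:fixedpoints} and \eqref{eqn:fixedpointsuv}, preimages of a fixed point that is itself a fixed point's ``orbit predecessor'' are constrained to two-digit numerals solving the same Pell-type conic $u^2+v^2=1+b^2$ with $u=2x-b$, $v=2y-1$; a short check using Cassini's identity \eqref{eqn:cassini} (or d'Ocagne's \eqref{eqn:docagne}) shows $1+F_{6n-2}^2$ factors so that the only relevant $(u,v)$ are those giving $(F_{2n},F_{4n-1})$ and its transpose, and the transpose fails the parity requirement (the first coordinate must be even while $F_{4n-1}$ is odd for the relevant residues), exactly as in the $\psi_{-}$ argument. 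I expect this parity/divisor bookkeeping to be the main obstacle; everything else is a direct transport of Theorems \ref{thm:fixed-points-minus} and \ref{thm:fixed-points-plus} across the maps $\Psi_{\pm}$.
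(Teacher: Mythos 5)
Your core argument is exactly the paper's: the paper proves this theorem by simply combining Theorems \ref{thm:fixed-points-minus} and \ref{thm:fixed-points-plus} with the correspondences \eqref{eqn:psiminus} and \eqref{eqn:psiplus}, and your transport of $[2n,4n-1]$ (with $N=6n-1$, $b=F_{N-1}=F_{6n-2}$) through $\Psi_{-}$ and of $[2n,4n+1]$ (with $N=6n+1$, $b=F_{N+1}=F_{6n+2}$) through $\Psi_{+}$, together with the digit-size check $F_{2n},F_{4n-1}<F_{6n-2}$, reproduces that proof. That part is fine.

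The problem is the second half, where you try to upgrade isolatedness from ``no $\psi_{\pm}$-preimage among pairs $[r,s]$ with $r+s=N$'' to ``no $S_b$-preimage among arbitrary numerals.'' You correctly identify this as the delicate point, but the argument you offer does not work: Theorem \ref{thm:fixedpoints} and \eqref{eqn:fixedpointsuv} characterize the \emph{fixed points} of $S_b$, i.e.\ solutions of $S_b(x.y)=x.y$, and say nothing about solutions of $S_b(x.y)=m$ for the particular value $m=F_{2n}F_{6n-2}+F_{4n-1}$. A preimage of the fixed point need not itself be a fixed point, so it is not constrained to lie on the conic $(2x-b)^2+(2y-1)^2=1+b^2$, and there is no a priori reason its digits are Fibonacci numbers at all; ruling out other preimages would amount to showing that $F_{2n}F_{6n-2}+F_{4n-1}$ admits no representation $x^2+y^2$ with digits $0\le x,y<b$ other than $(F_{2n},F_{4n-1})$, a sum-of-two-squares statement your sketch does not prove (the parity remark is also off, since $F_{4n-1}$ is even whenever $3\mid 4n-1$, e.g.\ $n=1$). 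Note that the paper itself does not attempt this stronger claim: its isolatedness is exactly the $\psi_{\pm}$-level statement of Theorems \ref{thm:fixed-points-minus} and \ref{thm:fixed-points-plus}, transported to numerals of the form $F_r.F_s$ with $r+s=N$. So either present isolatedness in that restricted sense (matching the paper) or supply a genuine argument about representations of $F_{2n}F_{6n-2}+F_{4n-1}$ as a sum of two squares; the Beardon fixed-point equation cannot substitute for it.
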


\begin{remark}
Theorem \ref{thm:fixed-points-fib} uses the following sequences in the OEIS \cite{oeis}: $F_{2n}$ \cite{A001906}, $F_{4n+1}$ \cite{A033889}, $F_{4n-1}$ \cite{A033891}, $F_{6n-2}$ \cite{A103134}, $F_{6n+2}$ \cite{A134494}.
\end{remark}

\begin{definition}[Companion base]
If $y.x|_{b}$ is fixed by $S_b$ and $x.y|_{b'}$ is also fixed by $S_{b'}$, then $b'$ is said to be a \emph{companion base} to $b$. Equivalently, $b$ and $b'$ are companion bases if and only if $n=x^2+y^2=y.x|_b=x.y|_{b'}$.
\end{definition}

\begin{theorem}[Companion bases $F_{4n-1}$ and $F_{4n}$]\label{thm:companion-fib} Assume $n\geq1$.
\begin{enumerate}
 \item \label{thm:companion-fib-a} The number $F_{2n+1}F_{2n-1}.F_{2n}F_{2n-1}|_b$ is fixed in base $b=F_{4n-1}$.

 \item \label{thm:companion-fib-b} The number $F_{2n}F_{2n-1}.F_{2n+1}F_{2n-1}|_b$ is fixed in base $b=F_{4n}$.
\end{enumerate}
\end{theorem}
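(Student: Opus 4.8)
The plan is to reduce both parts to the fixed-point criterion of Theorem~\ref{thm:fixedpoints}: a two-digit number $x.y|_b$ is fixed by $S_b$ exactly when $x^2+y^2=xb+y$ together with $0\le x<b$ and $1\le y<b$. Both parts concern the same pair of values, so I would set $x_0=F_{2n}F_{2n-1}$ and $y_0=F_{2n+1}F_{2n-1}$; then part~(b) is the assertion that $x_0.y_0|_{F_{4n}}$ is fixed, while part~(a) is the assertion that $y_0.x_0|_{F_{4n-1}}$ (the same two digits, interchanged) is fixed, which is precisely the companion-base relation, the common value being $x_0^2+y_0^2$. First I would compute $x_0^2+y_0^2=F_{2n-1}^2\bigl(F_{2n}^2+F_{2n+1}^2\bigr)=F_{2n-1}^2F_{4n+1}$ by Lucas's identity~\eqref{eqn:lucas}. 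Pulling $F_{2n-1}$ out of the defining equation and cancelling it (legitimate since $F_{2n-1}\ge1$), part~(a) becomes the Fibonacci identity $F_{2n-1}F_{4n+1}=F_{2n+1}F_{4n-1}+F_{2n}$ and part~(b) becomes $F_{2n-1}F_{4n+1}=F_{2n}F_{4n}+F_{2n+1}$.

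The heart of the matter is these two identities, and the cleanest derivation I see evaluates all three of the products
\[
F_{2n-1}F_{4n+1}=F_{3n}^2+F_{n+1}^2,\qquad F_{2n}F_{4n}=F_{3n}^2-F_n^2,\qquad F_{2n+1}F_{4n-1}=F_{3n}^2+F_{n-1}^2
\]
by Catalan's identity~\eqref{eqn:catalan} centered at index $m=3n$, with $r=n+1$, $r=n$, and $r=n-1$ respectively (checking the sign $(-1)^{m-r}$ in each case). Subtracting, part~(a) reduces to $F_{n+1}^2-F_{n-1}^2=F_{2n}$, which is d'Ocagne's identity~\eqref{eqn:docagne}, and part~(b) reduces to $F_{n+1}^2+F_n^2=F_{2n+1}$, which is Lucas's identity~\eqref{eqn:lucas} once more. (An alternative is to apply Vajda's identity~\eqref{eqn:vajda} centered at $2n$, but that brings in the extended values $F_{-1}=1$ and $F_{-2}=-1$; I prefer the Catalan route, which uses only identities already listed and stays within nonnegative indices.)

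Finally I would verify the digit inequalities. Cassini's identity~\eqref{eqn:cassini} gives $F_{2n-1}F_{2n+1}=F_{2n}^2+1$; Lucas's identity gives $F_{4n-1}=F_{2n}^2+F_{2n-1}^2$; and d'Ocagne's identity gives $F_{4n}=F_{2n+1}^2-F_{2n-1}^2=F_{2n}\bigl(F_{2n-1}+F_{2n+1}\bigr)$. In part~(a) the first digit is $y_0=F_{2n}^2+1$, which is $<F_{4n-1}=F_{2n}^2+F_{2n-1}^2$ precisely when $F_{2n-1}\ge2$, i.e.\ when $n\ge2$, while the second digit satisfies $1\le F_{2n}F_{2n-1}<F_{2n}^2<F_{4n-1}$. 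In part~(b) one has $1\le x_0=F_{2n}F_{2n-1}<y_0=F_{2n}^2+1<F_{2n}\bigl(F_{2n-1}+F_{2n+1}\bigr)=F_{4n}$, so the bounds hold for every $n\ge1$.

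I expect the only real friction to be bookkeeping --- pinning down the three Catalan specializations and their signs --- together with one genuine caveat: part~(a) should be read with $n\ge2$, since at $n=1$ one has $b=F_3=2$ and the leading digit $F_3F_1=2$ is not a legitimate base-$2$ digit, even though the algebraic identity $y_0^2+x_0^2=y_0F_3+x_0$ still holds there. Beyond the choice of center $m=3n$ for Catalan's identity, no new idea is needed.
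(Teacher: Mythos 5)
Your argument is essentially the paper's own: after cancelling the common factor $F_{2n-1}$ and applying Lucas's identity \eqref{eqn:lucas}, both proofs evaluate $F_{2n-1}F_{4n+1}$ and $F_{2n+1}F_{4n-1}$ (and, for part (b), $F_{2n}F_{4n}$) by Catalan's identity \eqref{eqn:catalan} centered at index $3n$, finishing with d'Ocagne's identity \eqref{eqn:docagne}. Your additions are ones the paper omits but that are sound---the explicit treatment of part (b), the digit-range checks, and the correct caveat that part (a) requires $n\geq2$, since at $n=1$ the leading digit $F_3F_1=2$ is not admissible in base $F_3=2$.
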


\begin{remark}
Theorem \ref{thm:companion-fib} uses the following sequences in the OEIS \cite{oeis}: $F_{4n}$ \cite{A033888}, $F_{4n-1}$ \cite{A033891}, $F_{2n+1} F_{2n-1}$ \cite{A064170},  $F_{2n} F_{2n-1}$  \cite{A081016}.
\end{remark}

\begin{proof}[Proof of Theorem \ref{thm:companion-fib}] Let $y.x|_b = F_{2n+1}F_{2n-1}.F_{2n}F_{2n-1}|_b$. Thus,
\begin{align}
 x^2 + y^2 - b y - x &= F_{2n+1}^2F_{2n-1}^2 + F_{2n}^2F_{2n-1}^2 - F_{2n+1}F_{2n-1} F_{4n-1} - F_{2n}F_{2n-1} \notag\\
\intertext{Factor out and discard $F_{2n-1}$ to obtain}
 &\approx F_{2n+1}^2F_{2n-1} + F_{2n}^2F_{2n-1} - F_{2n+1} F_{4n-1} - F_{2n}  \notag\\
 &= \left( F_{2n+1}^2 + F_{2n}^2 \right) F_{2n-1}  - F_{2n+1} F_{4n-1} - F_{2n} \notag\\
 &= F_{2n-1} F_{4n+1} - F_{2n+1} F_{4n-1} - F_{2n} \quad \text{(by \eqref{eqn:lucas})} \label{egn:fib4n}\\
 \intertext{Observe that the pairs of indices are symmetric about $3n$ to obtain the identities}
 F_{2n+1}F_{4n-1} &= F_{3n}^2 + F_{n-1}^2, \label{eqn:fib3na}\\
 F_{2n-1}F_{4n+1} &= F_{3n}^2 + F_{n+1}^2. \label{eqn:fib3nb}
\intertext{Substituting \eqref{eqn:fib3na} and \eqref{eqn:fib3nb} into \eqref{egn:fib4n}, we obtain}
 &= F_{3n}^2 + F_{n+1}^2 - F_{3n}^2 - F_{n-1}^2 - F_{2n} \notag\\
 &= F_{n+1}^2 - F_{n-1}^2 - F_{2n} \notag\\
 &= F_{2n} - F_{2n} = 0\quad\text{(by \eqref{eqn:docagne})}.\notag
\end{align}
The proof for the companion base is similar and left to the reader.
\end{proof}

Here are three interesting results and a corollary on companion bases. The proof of the first is left to the reader.

\begin{theorem}[Companion bases for numbers of the form $n.n$]\label{thm:companion-odd-base}
  If $n\geq2$, then $n.n|_b$ is a fixed point in base $b=2n-1$. Thus, every odd base is it's own companion base.
\end{theorem}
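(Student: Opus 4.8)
The plan is to reduce the claim to a one-line arithmetic identity. First I would use the hypothesis $n\geq 2$, which guarantees $n<2n-1=b$, so that $n.n|_b$ genuinely has the two base-$b$ digits $n$ and $n$; in particular the value $S_b(n.n|_b)=n^2+n^2=2n^2$ is legitimately computed from those two digits. Next I would evaluate the numeral directly: $n.n|_b = nb+n = n(b+1) = n\cdot 2n = 2n^2$. Comparing the two expressions yields $S_b(n.n|_b)=2n^2=n.n|_b$, which is exactly the assertion that $n.n|_b$ is a fixed point of $S_b$.

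Alternatively, and more in the spirit of Section \ref{s:beardon}, one could invoke Theorem \ref{thm:fixedpoints}: it suffices to check that $(x,y)=(n,n)$ solves $(2x-b)^2+(2y-1)^2=1+b^2$ with $b=2n-1$. Substituting, $2x-b = 2n-(2n-1)=1$ and $2y-1=2n-1=b$, so the left-hand side is $1+b^2$, as required; the constraints $0\leq x<b$ and $1\leq y<b$ again reduce to $n\geq 2$. Either route settles the fixed-point claim.

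For the companion-base consequence, I would observe that $b=2n-1$ ranges over all odd integers $\geq 3$ as $n$ ranges over integers $\geq 2$, so every odd base $b$ carries the fixed point $n.n|_b$ with $n=(b+1)/2$. Since this numeral has equal digits, the digit-swapped numeral is literally the same number in the same base; taking $b'=b$, $x=y=n$ in the definition of companion base, the two conditions ``$y.x|_b$ is fixed by $S_b$'' and ``$x.y|_{b'}$ is fixed by $S_{b'}$'' coincide and both hold, while $y.x|_b = x.y|_{b'} = 2n^2$. Hence every odd base is its own companion base.

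There is essentially no obstacle: the whole content is the coincidence $b+1=2n$, which forces $n.n|_b = 2n^2 = n^2+n^2$. The only point requiring care is the boundary of the digit range, namely confirming that $n$ is an admissible digit in base $2n-1$; this is precisely why the hypothesis reads $n\geq 2$ and not $n\geq 1$ (for $n=1$ one would have $b=1$, which is not a valid base).
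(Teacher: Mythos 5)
Your proof is correct: the paper in fact leaves this proof to the reader, and your direct verification $n.n|_b = n(2n-1)+n = 2n^2 = n^2+n^2 = S_b(n.n|_b)$, together with the digit-range check $n < 2n-1$ forced by $n\geq 2$, is exactly the intended argument (your alternative check via the fixed-point equation $(2x-b)^2+(2y-1)^2=1+b^2$ is a fine consistency check but not needed). The companion-base conclusion is also handled correctly, since with equal digits the swapped numeral coincides with the original and $b=2n-1$ runs over all odd bases $\geq 3$.
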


\begin{theorem}[Companion bases for numbers of the form $nu.u$]\label{thm:companion-n} Let $n\geq1$, $k\geq0$, and let $u=n+1+nk$. Then the number $nu.u|_b$ is a fixed point in base $b=n^2+n+1 + (n^2+1)k$ and the number $u.nu|_{b'}$ is a fixed point in base $b'=n^3+n^2+1 +n(n^2+1)k$. Thus, $b$ and $b'$ are companion bases.
\end{theorem}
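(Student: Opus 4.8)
The plan is to reduce both fixed-point assertions to the criterion of Theorem~\ref{thm:fixedpoints}: a two-digit number $x.y|_b$ is fixed by $S_b$ precisely when $x^2+y^2=xb+y$, together with the digit bounds $0\le x<b$ and $1\le y<b$. In each of the two cases the digits share the common factor $u$, so the defining equation can be divided through by $u$, in the spirit of the ``factor out and discard'' step in the proof of Theorem~\ref{thm:companion-fib}, leaving a single linear identity in $n$ and $k$ to verify plus two elementary size inequalities.

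For the first claim I would take $x=nu$ and $y=u$. Then $x^2+y^2=u^2(n^2+1)$ and $xb+y=u(nb+1)$, so the fixed-point equation is equivalent to $u(n^2+1)=nb+1$. Expanding with $u=n+1+nk$ gives $u(n^2+1)=n^3+n^2+n+1+n(n^2+1)k$, while $nb+1=n(n^2+n+1)+n(n^2+1)k+1=n^3+n^2+n+1+n(n^2+1)k$; the two agree, so the equation holds (and in passing $n\mid u(n^2+1)-1$, so $b$ really is the stated integer). For the digit bounds, $b-nu=1+k\ge1$, hence $nu<b$, and since $n\ge1$ also $u\le nu<b$; finally $u=n+1+nk\ge2\ge1$ and $b\ge3>2$. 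Thus $nu.u|_b$ is a fixed point of $S_b$.

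For the second claim I would take $x=u$ and $y=nu$, so that $u^2+(nu)^2=ub'+nu$ divides through by $u$ to $u(n^2+1)=b'+n$, i.e.\ $b'=u(n^2+1)-n$; by the same expansion as above this equals $n^3+n^2+1+n(n^2+1)k$, the claimed value of $b'$. The digit bounds follow from $b'-nu=n^3-n+1+n(n^2-n+1)k$, which is positive for every $n\ge1$ and $k\ge0$ (it equals $1+k$ when $n=1$), so $nu<b'$, hence $u\le nu<b'$, while $nu=n^2+n+n^2k\ge2\ge1$. Hence $u.nu|_{b'}$ is a fixed point of $S_{b'}$. The companion-base statement is then immediate: both fixed-point equations exhibit $nu.u|_b=u^2(n^2+1)=u.nu|_{b'}$, and this common value is $(nu)^2+u^2$, so taking $x=u$, $y=nu$ in the definition of companion bases shows $b$ and $b'$ are companion bases.

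I do not expect a genuine obstacle; the computation is entirely elementary once the common factor $u$ is divided out, and the two bases arise from the single identity $u(n^2+1)=nb+1=b'+n$. The only step requiring any vigilance is the digit inequality for the cubic base $b'$, where one must check $b'-nu>0$ in general and note that the degenerate case $n=1$ collapses to $u=nu$ and $b=b'=3+2k$, consistent with Theorem~\ref{thm:companion-odd-base}, which says every odd base is its own companion base.
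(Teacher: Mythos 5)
Your proof is correct and follows essentially the same route as the paper: dividing the fixed-point equation $x^2+y^2=xb+y$ by the common factor $u$ to reach $u(n^2+1)=nb+1$ and $u(n^2+1)=b'+n$, then expanding with $u=n+1+nk$. The only real difference is one of direction and completeness---the paper derives $u$ and the two bases from these equations (noting that the naive choice $u=1+nk$ fails), whereas you verify the stated formulas directly and additionally check the digit bounds $nu<b$ and $nu<b'$, which the paper leaves implicit.
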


\begin{remark}
 Theorem \ref{thm:companion-n} uses the following sequences from the OEIS \cite{oeis}: $n$ \cite{A000027}, $n^2+n+1$ \cite{A002061}, $n^2+1$ \cite{A002522}, $n(n^2+1)$ \cite{A034262}, $n^3+n^2+1$ \cite{A098547}.
\end{remark}

\begin{proof}[Proof of Theorem \ref{thm:companion-n}]
 If $nu.u$ is fixed in base $b$, then
 \begin{align}
  (nu)^2 + (u)^2 &= nu \cdot b + u \notag\\
  (n^2+1)u &= n b + 1.\label{e:1}
  \intertext{Similarly, if $u.nu|_{b'}$ is fixed in base $b'$, then}
  (1+n^2)u &= b' + n.\notag
\end{align}
If we reduce equation \eqref{e:1} modulo $n$ then we obtain $u \equiv 1 \pmod n$. so that $u=1+nk$ in general. Then we obtain
\begin{align*}
 b &= \dfrac{(n^2+1)(1+nk)-1}{n} \\
   &= n + (1+n^2) k.
\end{align*}
However, this will not work since $b=n$ and $nu.u|_b$ makes no sense. However, if we take $u=n+1+nk$, then we obtain
\begin{align*}
 b &= \dfrac{(n^2+1)(n+1+nk)-1}{n} \\
   &= n^2+n+1 + (n^2+1) k,
 \intertext{and we also obtain}
 b' &= (n^2+1)(n+1+nk) - n \\
 &= n^3+n^2+1 + n(n^2+1)k.
\end{align*}
If $n=3$, for example, then it is easily verified that $12+9k.4+3k|_b$ is fixed in base $b=13+10k$ and $4+3k.12+9k|_{b'}$ is fixed in base $b'=37+30k$. Even though the proof assumed $n\geq2$, the formulas work for $n=1$ also, that is, $2+k.2+k|_b$ is fixed in base $b=3+2k$, as given by Theorem \ref{thm:companion-odd-base}.
\end{proof}

\begin{theorem}[Companion bases for numbers of the form $nu.mu$]\label{thm:companion-m-and-n} Let $m$ and $n$ be relatively prime integers such that $n>m>1$. Then the number $nu.mu|_b$ is a fixed point in base $b=b_0+m(m^2+n^2)k$, and the number $mu.nu|_{b'}$ is a fixed point in base $b'=b_0'+n(m^2+n^2)k$, where
\begin{align*}
 u &= u_0 + mnk,\\
 b_0 &= \dfrac{(m^2+n^2)u_0-m}{n},\\
 b_0' &= \dfrac{(m^2+n^2)u_0-n}{m},
\end{align*}
and where, according to the Chinese Remainder Theorem, $u_0$ the smallest solution of the congruences
$$
u \equiv n^{-1} \pmod m\quad\text{and}\quad u \equiv m^{-1} \pmod n.
$$
Thus, $b$ and $b'$ are companion bases.
\end{theorem}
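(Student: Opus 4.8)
The plan is to mirror the proof of Theorem~\ref{thm:companion-n}, now with two coprime coefficients $m$ and $n$ in play instead of $1$ and $n$. First I would write down the fixed-point condition for each of the two candidate numbers, turning each into a single linear relation among $b$ (respectively $b'$), $u$, and $m,n$; then read off from those relations the divisibility constraints that $u$ must satisfy; then solve those constraints by the Chinese Remainder Theorem; and finally substitute the general solution back in to recover the stated arithmetic progressions for $b$ and $b'$.

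\textbf{The two fixed-point equations.} The condition $S_b(nu.mu|_b)=nu.mu|_b$ unwinds to $(nu)^2+(mu)^2=(nu)b+mu$; dividing by $u$ (harmless, since $u\ge1$) gives $(m^2+n^2)u=nb+m$. Symmetrically, $S_{b'}(mu.nu|_{b'})=mu.nu|_{b'}$ unwinds to $(m^2+n^2)u=mb'+n$. Equivalently,
\[
 b=\frac{(m^2+n^2)u-m}{n},\qquad b'=\frac{(m^2+n^2)u-n}{m}.
\]
Whenever both relations hold, the common value $(nu)^2+(mu)^2$ equals both $nu.mu|_b$ and $mu.nu|_{b'}$, so $b$ and $b'$ are companion bases by definition; it therefore suffices to pin down those $u$ for which the right-hand sides above are integers and the digits are admissible.

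\textbf{Reduction modulo $n$ and $m$, then CRT.} Since $\gcd(m,n)=1$, reducing $nb=(m^2+n^2)u-m$ modulo $n$ forces $m^2u\equiv m\pmod n$, hence $u\equiv m^{-1}\pmod n$; reducing $mb'=(m^2+n^2)u-n$ modulo $m$ forces $u\equiv n^{-1}\pmod m$. Conversely these two congruences make both $b$ and $b'$ integral. By the Chinese Remainder Theorem the pair of congruences has a unique solution $u_0$ modulo $mn$, with $u_0\ge1$ (the residue $m^{-1}$ is a unit mod $n>1$, so $u\equiv0$ is impossible), and the full solution set is $u=u_0+mnk$, $k\ge0$. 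Substituting $u=u_0+mnk$ into the displayed formulas for $b$ and $b'$ and splitting off the term linear in $k$ yields $b=b_0+m(m^2+n^2)k$ and $b'=b_0'+n(m^2+n^2)k$ with $b_0,b_0'$ exactly as in the statement (their integrality is the congruence condition just verified). That is the entire theorem apart from the bookkeeping that the digits are legitimate.

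\textbf{Digit admissibility and the main obstacle.} There is no deep obstacle; the computation is a two-parameter reprise of Theorem~\ref{thm:companion-n}. The one point requiring care is that $b$ and $b'$ be large enough for $nu$ and $mu$ to be legitimate base-$b$ (respectively base-$b'$) digits. Here $0\le nu<b$ reduces to $m(mu-1)>0$, immediate from $m\ge2$ and $u\ge1$, and then $1\le mu<nu<b$; while $0\le nu<b'$ reduces to $(m^2-mn+n^2)u>n$, which holds because $m^2-mn+n^2-n=m^2+n(n-m-1)>0$ for $n>m>1$, and then $1\le mu<nu<b'$. This last inequality is precisely where the hypothesis $m>1$---as opposed to $m\ge1$, the case already covered by Theorem~\ref{thm:companion-n}---gets used.
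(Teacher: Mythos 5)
Your proposal is correct and follows essentially the same route as the paper's proof: derive $(m^2+n^2)u=nb+m$ and $(m^2+n^2)u=mb'+n$ from the fixed-point conditions, reduce modulo $n$ and $m$ to get the two congruences, solve by the Chinese Remainder Theorem, and substitute $u=u_0+mnk$ to obtain the arithmetic progressions for $b$ and $b'$. Your additional verification that $nu$ and $mu$ are admissible digits (using $n>m>1$) is a worthwhile detail the paper omits, but it does not change the nature of the argument.
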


\begin{proof}
If $nu.mu|_b$ is a fixed point in base $b$, then
\begin{align}
 (mu)^2 + (nu)^2 &= nu\cdot b + mu \notag\\
 (m^2+n^2) u &= nb + m\label{e:m}\\
 \intertext{Similary, If $mu.nu|_{b'}$ is a fixed point in base $b'$, then}
 (m^2+n^2) u &= mb' + n. \label{e:n}
\end{align}
If we reduce \eqref{e:m} modulo $n$ and reduce \eqref{e:n} modulo $m$, we obtain the congruences
$$
u \equiv m^{-1} \pmod n\quad\text{and}\quad u \equiv n^{-1} \pmod m.
$$
Let $u_0$ be the smallest positive solution to this pair of congruences guaranteed by the Chinese Remainder Theorem, so that $u = u_0 + mnk$, $k\geq0$. The bases $b$ and $b'$ are then given by the arithmetic sequences
\begin{align*}
 b &= \dfrac{(m^2+n^2)(u_0+mnk)-m}{n} \\
 &= \dfrac{(m^2+n^2)u_0-m}{n} + m(m^2+n^2)k,
 \intertext{and}
 b' &= \dfrac{(m^2+n^2)u_0-n}{m} + n(m^2+n^2)k. \qedhere
\end{align*}
\end{proof}

An immediate application of Theorem \ref{thm:companion-m-and-n} is the following.

\begin{corollary}[Companion bases for $(n+1)u.nu$]\label{cor:companion-consecutive}  Let $n\geq1$ and $k\geq0$. Then  $(n+1)u.nu|_b$ is fixed in base $b$ and $nu.(n+1)u|_{b'}$ is fixed in base $b'$, where
\begin{align*}
 u &= 2n+1 + n(n+1)k, \\
 b &= 4n^2 + 2n + 1 + n(2n^2 + 2n + 1)k, \\
 b' &= 4n^2 + 6n + 3 + (n+1)(2n^2 + 2n + 1)k.
\end{align*}
\end{corollary}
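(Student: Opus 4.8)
The plan is to derive Corollary~\ref{cor:companion-consecutive} as a direct specialization of Theorem~\ref{thm:companion-m-and-n} with $m=n$ and $n$ replaced by $n+1$. First I would check that the hypotheses of the theorem are met: consecutive integers $n$ and $n+1$ are always relatively prime, and for $n\geq1$ the required inequality $n+1>n>1$ holds once $n\geq2$; the case $n=1$ will need to be verified separately by direct substitution into $S_b$, since there $m=1$ is excluded by the theorem. With $m\mapsto n$ and $n\mapsto n+1$, the quantity $m^2+n^2$ becomes $n^2+(n+1)^2 = 2n^2+2n+1$, and the step size $mn$ becomes $n(n+1)$, which immediately gives the stated form $u = u_0 + n(n+1)k$.

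Next I would identify $u_0$ as the smallest positive solution of the congruence pair $u\equiv (n+1)^{-1}\pmod n$ and $u\equiv n^{-1}\pmod{n+1}$. The key simplification is that modulo $n$ we have $n+1\equiv 1$, so $(n+1)^{-1}\equiv 1$, giving $u\equiv 1\pmod n$; and modulo $n+1$ we have $n\equiv -1$, so $n^{-1}\equiv -1\equiv n\pmod{n+1}$, giving $u\equiv n\pmod{n+1}$. One then checks that $u_0 = 2n+1$ satisfies both: $2n+1 = n\cdot 2 + 1\equiv 1\pmod n$ and $2n+1 = (n+1) + n\equiv n\pmod{n+1}$, and that $2n+1 < n(n+1)$ for $n\geq2$ so it is indeed the smallest positive solution. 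Hence $u = 2n+1 + n(n+1)k$ as claimed.

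Finally I would compute the two base formulas from the theorem's expressions $b_0 = \frac{(m^2+n^2)u_0 - m}{n}$ and $b_0' = \frac{(m^2+n^2)u_0-n}{m}$ under the substitution. For $b$, the leading constant is $\frac{(2n^2+2n+1)(2n+1) - n}{n+1}$, which I would expand and simplify to $4n^2+2n+1$, and the step size is $m(m^2+n^2)k = n(2n^2+2n+1)k$. For $b'$, the leading constant is $\frac{(2n^2+2n+1)(2n+1)-(n+1)}{n}$, simplifying to $4n^2+6n+3$, with step size $(n+1)(2n^2+2n+1)k$. These reductions are routine polynomial divisions; the only mild care needed is confirming that the numerators are genuinely divisible by $n+1$ and $n$ respectively, which is automatic since the theorem guarantees $b_0, b_0'$ are integers, but can also be seen directly from the congruences on $u_0$. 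The main obstacle, such as it is, is bookkeeping: making sure the roles of $m$ and $n$ are not swapped when passing from the theorem (where $n>m$) to the corollary (where the larger index $n+1$ plays the theorem's $n$), and handling the boundary case $n=1$ outside the theorem's scope.
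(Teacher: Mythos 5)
Your proposal is correct and matches the paper's intended route: the paper states Corollary~\ref{cor:companion-consecutive} as an immediate specialization of Theorem~\ref{thm:companion-m-and-n} (with the theorem's $m$ and $n$ playing the roles of $n$ and $n+1$), and your computations of $u_0=2n+1$, $b_0=4n^2+2n+1$, and $b_0'=4n^2+6n+3$ carry that out correctly. Your extra care with the boundary case $n=1$ (excluded by the theorem's hypothesis $m>1$, but verifiable directly, e.g.\ via Theorem~\ref{thm:companion-n}) is a point the paper glosses over, but it does not change the approach.
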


\begin{remark}
 Corollary \ref{cor:companion-consecutive} uses the following sequences from the OEIS \cite{oeis}: $2n+1$ \cite{A005408}, $n(n+1)$ \cite{A002378}, $n(2n^2+2n+1)$ \cite{A048395}, $4n^2+6n+3$ \cite{A054554}, $4n^2+2n+1$ \cite{A054569}, $(n+1)(2n^2+2n+1)$ \cite{A059722}.
\end{remark}

Since $\gcd(F_{m},F_{n})=F_{\gcd(m,n)}$ in general, Theorem \ref{thm:companion-fib} has an extension to arithmetic sequences of companion bases.

\begin{corollary}[Arithmetic sequence of Fibonacci companion bases]\label{thm:companion-arith-fib}
 Assume $n\geq1$ and $k\geq0$. The number $F_{2n+1}u.F_{2n}u|_{b}$ is a fixed point in base $b$, and $F_{2n}u.F_{2n+1}u|_{b'}$ is fixed in base $b'$, where
\begin{align*}
 u &= F_{2n-1}+F_{2n}F_{2n+1}k, \\
 b &= F_{4n-1}+F_{2n}F_{4n+1}k, \\
 b' &= F_{4n}+F_{2n+1}F_{4n+1}k.
\end{align*}
\end{corollary}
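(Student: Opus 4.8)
The plan is to read this off Theorem~\ref{thm:companion-m-and-n} by taking its two coprime parameters to be a consecutive pair of Fibonacci numbers, with Theorem~\ref{thm:companion-fib} supplying the ``$k=0$'' term of the progression. I would apply Theorem~\ref{thm:companion-m-and-n} with $(m,n)$ there replaced by $(F_{2n},F_{2n+1})$: these are coprime since $\gcd(F_{2n},F_{2n+1})=F_{\gcd(2n,2n+1)}=F_1=1$, and they satisfy $F_{2n+1}>F_{2n}>1$ whenever $n\geq2$. The case $n=1$ is genuinely degenerate (at $k=0$ the ``number'' $F_3u.F_2u|_b$ is $2.1|_2$), and I would dispose of it by direct inspection, observing that the defining relation $(F_{2n}^2+F_{2n+1}^2)u=F_{2n+1}b+F_{2n}$ of \eqref{e:m} still holds and that the two-digit representation is legitimate for all the relevant $k$.

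First I would record the one nontrivial input, $F_{2n}^2+F_{2n+1}^2=F_{4n+1}$, which is Lucas's identity~\eqref{eqn:lucas}. Then Theorem~\ref{thm:companion-m-and-n} immediately produces common differences $m(m^2+n^2)=F_{2n}F_{4n+1}$ for $b$, $\,n(m^2+n^2)=F_{2n+1}F_{4n+1}$ for $b'$, and $mn=F_{2n}F_{2n+1}$ for $u$, exactly as claimed; everything then reduces to identifying the base points $u_0,b_0,b_0'$ with $F_{2n-1},F_{4n-1},F_{4n}$.

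For $u_0$ I would check that $F_{2n-1}$ is the least positive solution of the congruences $u\equiv F_{2n+1}^{-1}\pmod{F_{2n}}$, $u\equiv F_{2n}^{-1}\pmod{F_{2n+1}}$ of Theorem~\ref{thm:companion-m-and-n}. Using $F_{2n-1}\equiv-F_{2n}\pmod{F_{2n+1}}$ together with Cassini's identity~\eqref{eqn:cassini} in the form $F_{2n-1}F_{2n+1}=F_{2n}^2+1$, one gets $F_{2n-1}F_{2n}\equiv-F_{2n}^2\equiv1\pmod{F_{2n+1}}$ and $F_{2n-1}F_{2n+1}\equiv1\pmod{F_{2n}}$; since $0<F_{2n-1}<F_{2n}F_{2n+1}$, it is the least such. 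For $b_0$ and $b_0'$, rather than evaluate $\dfrac{(m^2+n^2)u_0-m}{n}$ and $\dfrac{(m^2+n^2)u_0-n}{m}$ by hand, I would simply invoke Theorem~\ref{thm:companion-fib}: it already asserts that $F_{2n+1}F_{2n-1}.F_{2n}F_{2n-1}|_{F_{4n-1}}$ and $F_{2n}F_{2n-1}.F_{2n+1}F_{2n-1}|_{F_{4n}}$ are fixed, and these are precisely the $k=0$ instances of \eqref{e:m} and \eqref{e:n} with $u_0=F_{2n-1}$, forcing $b_0=F_{4n-1}$ and $b_0'=F_{4n}$. (A self-contained alternative: $b_0=F_{4n-1}$ and $b_0'=F_{4n}$ are equivalent to $F_{2n-1}F_{4n+1}-F_{2n+1}F_{4n-1}=F_{2n}$ and $F_{2n-1}F_{4n+1}-F_{2n}F_{4n}=F_{2n+1}$, both of which drop out of Vajda's identity~\eqref{eqn:vajda} taken at base index $2n-1$.) Feeding these base points into the arithmetic progressions of Theorem~\ref{thm:companion-m-and-n} yields the stated $u$, $b$, $b'$.

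The part I expect to need the most care is confirming that $F_{2n+1}u.F_{2n}u|_b$ and $F_{2n}u.F_{2n+1}u|_{b'}$ really are two-digit numerals, i.e.\ that $F_{2n+1}u<b$ and $F_{2n+1}u<b'$ in the sense required by Beardon's Theorem~\ref{thm:fixedpoints}, rather than merely formal solutions of $x^2+y^2=xb+y$. Expanding $b-F_{2n+1}u$ and $b'-F_{2n+1}u$ and simplifying with Cassini~\eqref{eqn:cassini}, Lucas~\eqref{eqn:lucas}, and d'Ocagne~\eqref{eqn:docagne} gives $b-F_{2n+1}u=(F_{2n-1}^2-1)+F_{2n}^3\,k$ and $b'-F_{2n+1}u=(2F_{2n}F_{2n-1}-1)+F_{2n+1}(2F_{2n}^2+1)\,k$, which are nonnegative for all $n\geq1$, $k\geq0$, with equality only in the base-$b$ statement at $n=1$, $k=0$ — the degenerate case already handled at the outset.
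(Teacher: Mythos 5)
Your proposal is correct and follows essentially the route the paper intends: the corollary is stated immediately after the remark that $\gcd(F_m,F_n)=F_{\gcd(m,n)}$, i.e.\ it is meant to be read off from Theorem~\ref{thm:companion-m-and-n} with $(m,n)=(F_{2n},F_{2n+1})$, with Theorem~\ref{thm:companion-fib} supplying the $k=0$ base points $u_0=F_{2n-1}$, $b_0=F_{4n-1}$, $b_0'=F_{4n}$ and Lucas's identity giving $m^2+n^2=F_{4n+1}$. Your additional verifications (the CRT computation via Cassini, the digit-size bounds, and the degenerate $n=1$, $k=0$ case) fill in details the paper leaves implicit but do not change the argument.
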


\begin{remark}
Theorem \ref{thm:companion-arith-fib} uses the following sequences in the OEIS \cite{oeis}: $F_{2n-1}$ \cite{A001519}, $F_{2n}$ \cite{A001906}, $F_{4n}$ \cite{A033888}, $F_{4n+1}$ \cite{A033889}, $F_{4n-1}$ \cite{A033891}, $F_{2n}F_{2n+1}$ \cite{A081018}. The sequences $F_{2n}F_{4n+1}$ and $F_{2n+1}F_{4n+1}$ do not occur in the OEIS.
\end{remark}

Let us provide one more example of natural interest.

\begin{theorem}[Companion bases for triangular numbers]\label{thm:companion-triangular}  Let $T_n=n(n+1)/2$, $n\geq2$, be a triangular number. Then $T_n.T_{n+1}|_b$ is fixed in base $b=n^2+n+1$ and $T_{n+1}.T_n|_{b'}$ is fixed in base $b'=(n+1)^2+(n+1)+1=n^2+3n+3$.
\end{theorem}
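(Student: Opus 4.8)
The plan is to invoke the fixed-point criterion of Theorem~\ref{thm:fixedpoints}: a two-digit number $x.y|_c$ is fixed by $S_c$ exactly when $x^2+y^2=xc+y$ with $0\le x<c$ and $1\le y<c$. So the whole argument reduces to substituting $x,y\in\{T_n,T_{n+1}\}$ and $c\in\{n^2+n+1,\,n^2+3n+3\}$, using $T_m=\tfrac{m(m+1)}{2}$, and checking a polynomial identity in $n$ together with the digit inequalities. I would note at the outset that $n^2+3n+3=(n+1)^2+(n+1)+1$, so the two bases are consecutive values of $m\mapsto m^2+m+1$; and since $x^2+y^2$ is symmetric in $x,y$ while $xc+y$ is not, one must match each digit-ordering of $T_n,T_{n+1}$ with the base for which the corresponding equation $x^2+y^2=xc+y$ actually holds — which is exactly what makes the two bases companion bases.

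First I would simplify the common left side: $T_n^2+T_{n+1}^2=\tfrac14\bigl(n^2(n+1)^2+(n+1)^2(n+2)^2\bigr)=\tfrac14(n+1)^2\bigl(n^2+(n+2)^2\bigr)=\tfrac12(n+1)^2(n^2+2n+2)$. Next I would compute the right side $xc+y$ in each case. Pairing the digits with base $n^2+n+1$ gives $\tfrac{n+1}{2}\bigl[(n+2)(n^2+n+1)+n\bigr]$, and pairing them with base $n^2+3n+3$ gives $\tfrac{n+1}{2}\bigl[n(n^2+3n+3)+(n+2)\bigr]$; in both cases the bracket collapses to $n^3+3n^2+4n+2$. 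The single algebraic fact doing all the work is the factorization $n^3+3n^2+4n+2=(n+1)(n^2+2n+2)$ (checked by expanding, or by spotting that $n=-1$ is a root), which turns each right side into $\tfrac12(n+1)^2(n^2+2n+2)$, matching $T_n^2+T_{n+1}^2$.

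Then I would verify the range conditions so that Theorem~\ref{thm:fixedpoints} genuinely applies. For $n\ge2$ one has $1\le T_n<T_{n+1}$, and $T_{n+1}=\tfrac{(n+1)(n+2)}{2}<n^2+n+1$ whenever $n\ge2$ (equivalently $n^2+3n+2<2n^2+2n+2$, i.e.\ $n>1$); hence both digits lie below $n^2+n+1$, and a fortiori below the larger base $n^2+3n+3$, so each number really is a two-digit number in the asserted base. Finally, since the common value $T_n^2+T_{n+1}^2$ equals both of these two-digit numbers in their respective bases, the two bases are companion bases by definition.

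I do not expect a genuine obstacle: the heart of the proof is one cubic identity plus elementary inequalities. The only point needing care is getting the companion pairing right, i.e.\ attaching each triangular-number ordering to the base for which the fixed-point equation holds. As a sanity check and for context I would observe that the result also follows from the general machinery already in hand: writing $T_n:T_{n+1}$ in lowest terms gives the ratio $\tfrac n2:\tfrac n2+1$ when $n$ is even and $n:n+2$ when $n$ is odd, so for $n\ge3$ it is an instance of Theorem~\ref{thm:companion-m-and-n} (with the reduced pair playing the roles of $m,n$, both exceeding $1$), while for $n=2$ the reduced ratio is $1:2$, the $nu.u$ situation covered by Theorem~\ref{thm:companion-n}. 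I would, however, carry out the direct computation above, since it is uniform in $n$ and self-contained.
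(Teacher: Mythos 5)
Your algebra is sound, and since the paper simply leaves this proof to the reader, your direct verification is a complete argument rather than a variant of an existing one. The computation $T_n^2+T_{n+1}^2=\tfrac12(n+1)^2(n^2+2n+2)$, the collapse of both right-hand brackets to $n^3+3n^2+4n+2=(n+1)(n^2+2n+2)$, and the digit bound $T_{n+1}<n^2+n+1$ for $n\ge2$ are all correct and exactly what an appeal to Theorem~\ref{thm:fixedpoints} requires; the reduction to Theorems~\ref{thm:companion-m-and-n} and \ref{thm:companion-n} is a sensible cross-check (indeed $n=2$ is the $k=0$ instance of Corollary~\ref{cor:companion-consecutive}).

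There is, however, one point you must make explicit. What your identities establish is $T_n^2+T_{n+1}^2=T_{n+1}(n^2+n+1)+T_n=T_n(n^2+3n+3)+T_{n+1}$, i.e.\ that $T_{n+1}.T_n|_b$ is fixed for $b=n^2+n+1$ and $T_n.T_{n+1}|_{b'}$ is fixed for $b'=n^2+3n+3$. That is the \emph{transpose} of the theorem as printed, which attaches $T_n.T_{n+1}$ to the base $n^2+n+1$. Your pairing is the correct one: for $n=2$, $T_2.T_3|_7=3\cdot7+6=27\neq45=T_2^2+T_3^2$, whereas $T_3.T_2|_7=45$ and $T_2.T_3|_{13}=45$; your version also matches the convention elsewhere in the paper (Theorem~\ref{thm:companion-fib}, Theorem~\ref{thm:companion-n}, Corollary~\ref{cor:companion-consecutive}), where the larger digit leads in the smaller base. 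So the printed statement evidently has the two digit orders (equivalently, the two bases) interchanged, and what you have proved is the corrected statement. You hint at this (``one must match each digit-ordering with the base for which the equation actually holds'') but never say that your attribution differs from the printed one; as written, the proof appears mismatched with the claim it is supposed to establish. Say explicitly which ordering goes with which base and note the needed correction to the statement.
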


\begin{remark}
Theorem \ref{thm:companion-triangular} uses the following sequences from the OEIS \cite{oeis}: $T_n$ \cite{A000217}, $n^2+n+1$ \cite{A002061}, $T_{n^2}$ \cite{A037270}. Recall that $T_n^2 + T_{n+1}^2 = T_{(n+1)^2}$.
\end{remark}

\begin{proof}[Proof of Theorem \ref{thm:companion-triangular}] The proof is left to the reader. \end{proof}

If we have an arithmetic sequence of Fibonacci fixed points as in Theorem  \ref{thm:companion-arith-fib}, then it should come as no surprise that we also have arithmetic sequences of Fibonacci cycles, and that's the topic of the next two sections.

\section{Arithmetic progressions of Fibonacci cycles of type I}\label{s:arithFibonacci-i}

This section shows that the fundamental cycles of type I admit extension to an arithmetic sequence of cycles in which the common differences are all Fibonacci numbers.

\begin{theorem}\label{thm:arithfibcycle-i}
 Let $N=2n-1$, $n\geq2$, and $k\geq0$. Then
\begin{equation}
  S_b(x_1.x_0) = y_1.y_0|_b,
 \end{equation}
where
\begin{align*}
 b &= F_{N+1} + F_{N+2}k, \\
 x_0 &= F_{i} + F_{i+1}k, \\
 x_1 &= F_{N-i} + F_{N-i+1}k, \\
 y_0 &= F_{2i+1} + F_{2i+2}k, \\
 y_1 &= F_{N-(2i+1)} + F_{N-2i}k,
\end{align*}
and where $0\leq i\leq n-1$.
\end{theorem}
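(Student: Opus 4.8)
The plan is to fold everything into a single polynomial identity in $k$ by passing to a $k$-shifted Fibonacci sequence. Set $G_j=G_j(k):=F_j+F_{j+1}k$; since $(F_j)$ obeys the Fibonacci recurrence, so does $(G_j)$, with $G_0=k$ and $G_1=1+k$. A routine index check identifies all the data of the theorem as values of this sequence,
\[
 b=G_{N+1},\qquad x_0=G_i,\qquad x_1=G_{N-i},\qquad y_0=G_{2i+1},\qquad y_1=G_{N-(2i+1)}.
\]
Since $G_{j+1}-G_j=(F_{j+1}-F_j)+(F_{j+2}-F_{j+1})k\ge 0$, the sequence $(G_j)$ is nondecreasing, and because $0\le i\le n-1$ each of the indices $i$, $N-i$, $2i+1$, $N-(2i+1)$ is at most $N$; hence $0\le x_0,x_1,y_0,y_1\le G_N<G_{N+1}=b$. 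So $x_1.x_0|_b$ and $y_1.y_0|_b$ are legitimate base-$b$ numerals of at most two digits, $S_b(x_1.x_0)=x_1^2+x_0^2$, and the theorem becomes the identity
\[
 G_{N-i}^2+G_i^2=G_{N-(2i+1)}\,G_{N+1}+G_{2i+1},
\]
which is \eqref{e:fibplus} with $G$ in place of $F$.

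I would prove this $G$-identity by copying the proof of Theorem~\ref{thm:fibplus}. As $N=2n-1$ and $0\le i\le n-1$ we have $i\le N-i$, so $j:=\min(i,N-i)=i$ and no case split arises. The two ingredients are a $k$-shifted Catalan identity
\[
 G_m^2=G_{m+r}G_{m-r}+(-1)^{m-r}\mu\,F_r^2,\qquad \mu:=1+k-k^2=G_1^2-G_0G_2,
\]
and a $k$-shifted Lucas identity $G_m^2+\mu\,F_{m+1}^2=G_{2m+1}$. Granting these, apply the first with $m=N-i$ and $r=i+1$ (so $m-r=N-(2i+1)$ is even and the sign is $+1$) to get $G_{N-i}^2=G_{N+1}G_{N-(2i+1)}+\mu F_{i+1}^2$, add $G_i^2$, and apply the second with $m=i$; the desired identity comes out verbatim, exactly as in Theorem~\ref{thm:fibplus}.

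What remains is the verification of the two $k$-shifted identities, and that is where the only real computation sits. The $k$-shifted Lucas identity is immediate: expanding in $k$, the $k^2$ terms cancel, the constant term is $F_m^2+F_{m+1}^2=F_{2m+1}$ by Lucas's identity \eqref{eqn:lucas}, and the coefficient of $k$ is $F_{m+1}^2+2F_mF_{m+1}=F_{m+1}(F_{m+2}+F_m)=F_{m+2}^2-F_m^2=F_{2m+2}$ by d'Ocagne's identity \eqref{eqn:docagne}. For the $k$-shifted Catalan identity, expanding in $k$ again, the $k^0$ and $k^2$ coefficients are two instances of Catalan's identity \eqref{eqn:catalan} (at $n=m$ and at $n=m+1$, same $r$, with the sign governed by the parity of $m-r$), while the coefficient of $k$ is the bilinear relation
\[
 2F_mF_{m+1}-F_{m+r+1}F_{m-r}-F_{m+r}F_{m-r+1}=(-1)^{m-r}F_r^2,
\]
which follows from two applications of Vajda's identity \eqref{eqn:vajda} together with $F_{-n}=(-1)^{n+1}F_n$. (Equivalently, one can bypass the $G$-sequence and expand the target $x_1^2+x_0^2=y_1b+y_0$ directly in $k$: its constant term is \eqref{e:fibplus} with $j=i$, its $k^2$ coefficient is Catalan's identity, and its $k^1$ coefficient reduces, after \eqref{e:fibplus}, the relation $2F_mF_{m+1}=F_{2m}+F_m^2$, and the recurrence, to the addition formula $F_{2N-2i}=F_{N-2i}F_{N+1}+F_{N-(2i+1)}F_N$, itself a case of Vajda's identity.) In either route, this mixed coefficient-of-$k$ relation is the only genuine obstacle; the extreme coefficients fall out at once, and the rest of the argument is a transcription of the already-established Theorem~\ref{thm:fibplus}.
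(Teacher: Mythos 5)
Your proposal is correct, but it is organized genuinely differently from the paper's proof. The paper expands $x_0^2+x_1^2-y_1b-y_0$ directly in powers of $k$ and kills the three coefficients one at a time: the constant term is exactly \eqref{e:fibplus}, the linear term is disposed of by two purpose-built lemmas (Lemma \ref{lem:odd-vadja}, a Vajda consequence giving $F_{N-2i}F_{N+1}=F_{N-i}F_{N-i+1}+F_iF_{i+1}$, and Lemma \ref{lem:dOcagne}, a d'Ocagne consequence giving $F_{N-2i}F_{N+1}=F_{N+2}F_{N-2i-1}+F_{2i+2}$), and the quadratic term by Catalan's identity with $r=i+1$. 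You instead factor the whole statement through the shifted sequence $G_j=F_j+F_{j+1}k$, prove once and for all the shifted Catalan identity $G_m^2=G_{m+r}G_{m-r}+(-1)^{m-r}\mu F_r^2$ with $\mu=1+k-k^2$ and the shifted Lucas identity $G_m^2+\mu F_{m+1}^2=G_{2m+1}$, and then rerun the proof of Theorem \ref{thm:fibplus} verbatim at the level of $G$ (correctly noting that $j=\min(i,N-i)=i$ throughout the stated range, so no case split is needed). The computations underneath are parallel---your verification of the two shifted identities is again an expansion in $k$ using Lucas, d'Ocagne, Catalan and Vajda, and your mixed coefficient relation $2F_mF_{m+1}-F_{m+r+1}F_{m-r}-F_{m+r}F_{m-r+1}=(-1)^{m-r}F_r^2$ does check out via two applications of Vajda extended by $F_{-n}=(-1)^{n+1}F_n$, an extension the paper never needs but which is standard---yet your packaging buys something: the shifted identities are reusable, they exhibit Theorem \ref{thm:arithfibcycle-i} as a literal instance of Theorem \ref{thm:fibplus} for the Gibonacci sequence $G_j$, and the same template adapts (with the appropriate discriminant) to the type~II and Pell analogues such as Theorem \ref{thm:arithfibcycle-ii}. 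You also verify the digit bounds $0\le x_0,x_1,y_0,y_1<b$, which the paper leaves implicit; your closing parenthetical alternative is essentially the paper's own route.
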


We need a couple of lemmas to verify Theorem \ref{thm:arithfibcycle-i}.

\begin{lemma}\label{lem:dOcagne}
Let $N=2n-1$, $n\geq2$. Then
\begin{equation}\label{eqn:odd-docagne}
  F_{N-2i} F_{N+1} = F_{N+2}F_{N-2i-1} + F_{2i+2},
\end{equation}
where $0\leq i \leq n-1$.
\end{lemma}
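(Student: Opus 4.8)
The plan is to verify the identity \eqref{eqn:odd-docagne} directly by expressing every term in the standard Fibonacci basis and reducing to a known identity. Writing $N=2n-1$ and $N+1=2n$, $N+2=2n+1$, the claim becomes
$$
F_{2n-2i}\,F_{2n} = F_{2n+1}\,F_{2n-2i-1} + F_{2i+2},\qquad 0\le i\le n-1.
$$
Rearranging, this is equivalent to
$$
F_{2n-2i}\,F_{2n} - F_{2n+1}\,F_{2n-2i-1} = F_{2i+2}.
$$
First I would apply Vajda's identity \eqref{eqn:vajda} to each product on the left so that both are re-expressed around a common index, or alternatively use the two-term product-difference form $F_a F_b - F_{a+1} F_{b-1} = (-1)^{b-1} F_{a-b+1} F_1 = (-1)^{b-1} F_{a-b+1}$ (a specialization of Vajda's identity with appropriate shifts). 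Taking $a=2n-2i$, $b=2n$ gives $a-b+1 = 1-2i$ and $b-1 = 2n-1$ odd, so the left side equals $(-1)^{2n-1} F_{1-2i} = -F_{1-2i}$. Using $F_{-m} = (-1)^{m+1}F_m$, we get $-F_{1-2i} = -(-1)^{2i} F_{2i-1}$... which is not quite $F_{2i+2}$, so the correct bookkeeping requires care: I expect the cleanest route is to instead massage the left side via $F_{2n} = F_{2n-2i} + \text{(lower terms)}$ telescoping, or to invoke d'Ocagne \eqref{eqn:docagne} directly after the substitution $F_{2n-2i}F_{2n} - F_{2n-2i-1}F_{2n+1} = -(F_{2n-2i-1}F_{2n+1} - F_{2n-2i}F_{2n})$ and recognize the bracket as a Vajda instance equal to $(-1)^{\cdot}F_{\text{something}}$.

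Concretely, the key steps in order are: (1) substitute $N=2n-1$ throughout and clear to the form $F_{2n-2i}F_{2n} - F_{2n-2i-1}F_{2n+1} = F_{2i+2}$; (2) apply Vajda's identity \eqref{eqn:vajda} with base index $n' = 2n-2i-1$ and offsets $r = 1$, $s = 2i+2$ (so $n'+r = 2n-2i$, $n'+s = 2n+1$, $n'+r+s = 2n+2$), giving $F_{2n-2i}F_{2n+1} = F_{2n-2i-1}F_{2n+2} + (-1)^{2n-2i-1}F_1 F_{2i+2}$; (3) rearrange and then use the basic recurrence $F_{2n+1} = F_{2n+2} - F_{2n}$ and $F_{2n} = F_{2n-2i} \cdot (\text{nothing}) $... — more carefully, combine with a second Vajda application or with Catalan's identity \eqref{eqn:catalan} to eliminate the cross terms; (4) collect and recognize the survivor as $F_{2i+2}$, invoking d'Ocagne \eqref{eqn:docagne} in the form $F_{2i+2} = F_{i+2}^2 - F_i^2$ only if a squares-based reduction is used. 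Finally, I would check the endpoint cases $i=0$ (where the identity reads $F_{2n}^2 = F_{2n+1}F_{2n-1} + 1$, i.e.\ Cassini \eqref{eqn:cassini}) and $i=n-1$ as sanity checks.

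The main obstacle I anticipate is purely a sign-and-index bookkeeping one: Vajda's identity produces a $(-1)^{n'}$ factor whose parity must be tracked precisely through the substitution $N=2n-1$, and one must be careful that the offsets $r,s$ are chosen so the leftover term is exactly $+F_{2i+2}$ rather than $-F_{2i+2}$ or $F_{2i}$. There is no deep difficulty — the identity is a disguised special case of Vajda's identity (equivalently, a "d'Ocagne-type" identity, which is why the lemma is named after d'Ocagne) — but getting the offsets and the parity of the exponent on $-1$ correct on the first try is the delicate part. Once the right instance of \eqref{eqn:vajda} is identified, the remainder is a one-line simplification using $F_1 = 1$ and the fact that $2n-2i-1$ is odd, so $(-1)^{2n-2i-1} = -1$, which flips the sign appropriately.
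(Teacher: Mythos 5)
Your overall strategy---proving the lemma by one application of the Vajda/two-index d'Ocagne identity, using $F_1=1$ and a parity check on the $(-1)$ exponent---is exactly the paper's route (the paper takes $F_mF_{j+1}-F_{m+1}F_j=(-1)^jF_{m-j}$ with $m=N+1$, $j=N-2i-1$). However, your execution contains a concrete index error that derails the argument. Substituting $N=2n-1$ gives $F_{N-2i}=F_{2n-2i-1}$ and $F_{N-2i-1}=F_{2n-2i-2}$, not $F_{2n-2i}$ and $F_{2n-2i-1}$; the target you then set yourself, $F_{2n-2i}F_{2n}-F_{2n-2i-1}F_{2n+1}=F_{2i+2}$, is in fact false---it equals $-F_{2i+1}$ (e.g.\ $n=3$, $i=1$: $F_4F_6-F_3F_7=24-26=-2$, whereas $F_{2i+2}=F_4=3$). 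Your own sanity check should have caught this: at $i=0$ your version reads $F_{2n}^2=F_{2n+1}F_{2n-1}+1$, which has the wrong sign relative to Cassini \eqref{eqn:cassini} for an even index. The Vajda instance you write in step (2) is a correct identity, but it proves a shifted statement ($F_{N-2i}F_{N+2}$ paired with $F_{N-2i+1}F_{N+1}$ in the original indexing), not \eqref{eqn:odd-docagne}, and your closing sign analysis (``$2n-2i-1$ is odd, so the sign flips'') is keyed to the wrong base index.

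The correct one-line parameterization, in the paper's own variables, is Vajda \eqref{eqn:vajda} with $n\mapsto N-2i-1$, $r=1$, $s=2i+2$:
\begin{equation*}
F_{N-2i}F_{N+1}=F_{N-2i-1}F_{N+2}+(-1)^{N-2i-1}F_1F_{2i+2},
\end{equation*}
and since $N=2n-1$ is odd, the exponent $N-2i-1=2n-2i-2$ is \emph{even}, so the sign is $+1$ and \eqref{eqn:odd-docagne} follows with no sign flip needed. So the idea is right and matches the paper, but as written the proposal proves a different identity and would not compile into a proof of the lemma without redoing the bookkeeping as above.
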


\begin{proof}
Let $m=N+1$ and $n=N-2i-1$ in d'Ocagne's identity \eqref{eqn:docagne} to obtain
\begin{align*}
 F_{N+1} F_{(N-2i-1)+1} - F_{N+2}F_{N-2i-1} &= (-1)^{N-2i-1}F_{N+1-N+2i+1}\\
 F_{N+1} F_{N-2i} - F_{N+2}F_{N-2i-1} &= F_{2i+2}\\
 F_{N+1} F_{N-2i} &= F_{N+2}F_{N-2i-1} + F_{2i+2}.\qedhere
\end{align*}
\end{proof}

\begin{lemma}\label{lem:odd-vadja}
Let $N=2n-1$, $n\geq2$. Then
\begin{equation}\label{eqn:odd-vadja}
  F_{N-2i}F_{N+1} = F_{N-i}F_{N-i+1} + F_{i} F_{i+1}
\end{equation}
where $0\leq i \leq n-1$.
\end{lemma}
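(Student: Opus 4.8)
The identity to prove, \eqref{eqn:odd-vadja}, asserts $F_{N-2i}F_{N+1} = F_{N-i}F_{N-i+1} + F_{i}F_{i+1}$ for $N=2n-1$ and $0\le i\le n-1$. The natural approach is to recognize it as a specialization of Vajda's identity \eqref{eqn:vajda}, $F_{n+r}F_{n+s} = F_nF_{n+r+s} + (-1)^n F_rF_s$, since the right-hand side already has the shape $F_aF_{a+1} + (\pm)F_bF_{b+1}$. First I would write $F_{N-i}F_{N-i+1}$ in the form $F_{m+r}F_{m+s}$ by choosing the ``center'' $m$ and offsets $r,s$ appropriately. A symmetric choice is $m=i$, $r=N-2i$, $s=N-2i+1$, so that $F_{m+r}=F_{N-i}$ and $F_{m+s}=F_{N-i+1}$, giving $F_{N-i}F_{N-i+1} = F_i F_{N+1} + (-1)^i F_{N-2i}F_{N-2i+1}$ — not quite the target. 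A cleaner choice is to take the center at $i$ with $r=N-2i$ and $s=N-2i+1$, or alternatively to exploit that $N-i = i + (N-2i)$ and $N-i+1 = (i+1)+(N-2i)$; then with base index $i$ (resp.\ $i+1$) one peels off a term $F_i F_{?}$. I would experiment with two or three such substitutions until the cross term lands exactly as $F_i F_{i+1}$ with the correct sign.

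**Key steps in order.** (1) Set $n \mapsto i+1$, and pick $r,s$ so that $F_{n+r}F_{n+s} = F_{N-2i}F_{N+1}$: namely $i+1+r = N-2i$ and $i+1+s = N+1$, i.e.\ $r = N-3i-1$, $s = N-i$. Vajda then gives $F_{N-2i}F_{N+1} = F_{i+1}F_{2N-4i} + (-1)^{i+1}F_{N-3i-1}F_{N-i}$ — messy, so this parametrization is wrong. (2) Instead, recognize that the cleanest route is to apply Vajda with the center chosen so the \emph{output} is $F_{N-2i}F_{N+1}$: set $n \mapsto N-2i$, $r \mapsto ?$, $s\mapsto ?$ with $r+s$ chosen so $F_{n+r+s} = F_{N+1}$, i.e.\ $r+s = 2i+1$. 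Then $F_{(N-2i)+r}F_{(N-2i)+s} = F_{N-2i}F_{N+1} + (-1)^{N-2i}F_rF_s$. Taking $r = -i$, $s = i+1$ gives $F_{N-3i}F_{N-i+1} = F_{N-2i}F_{N+1} + (-1)^{N-2i}F_{-i}F_{i+1}$; using $F_{-i} = (-1)^{i+1}F_i$ and $N-2i = 2n-1-2i$ odd, the sign works out and the left side should simplify. (3) After landing the correct substitution, reduce any negative-index Fibonacci numbers via $F_{-m} = (-1)^{m+1}F_m$, collect signs using the parity of $N-2i$ (which is odd since $N$ is odd), and verify the identity reduces to $0=0$ or to a previously established identity. (4) State the range constraint $0\le i\le n-1$ is exactly what keeps all indices in the ``safe'' range and makes $j=\min(i,N-i)=i$, consistent with Theorem~\ref{thm:fibplus}.

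**Main obstacle.** The only real difficulty is bookkeeping: finding the substitution of Vajda's identity \eqref{eqn:vajda} that produces the cross term exactly as $F_iF_{i+1}$ with a \emph{plus} sign rather than some other product or a minus sign. Because $N=2n-1$ is odd, parities of the form $(-1)^{N-2i}$, $(-1)^{N-i}$, etc.\ alternate with $i$, so I would need to track $(-1)^i$ carefully and possibly combine two applications of Vajda (or one application of Vajda plus the negative-index reflection $F_{-m}=(-1)^{m+1}F_m$) to cancel the unwanted sign. An alternative fallback, if the direct Vajda substitution proves awkward, is to prove \eqref{eqn:odd-vadja} by showing both sides equal $F_{N+2}F_{N-2i-1} + F_{2i+2}$: the left side does so by Lemma~\ref{lem:dOcagne} \eqref{eqn:odd-docagne}, so it would suffice to show $F_{N-i}F_{N-i+1} + F_iF_{i+1} = F_{N+2}F_{N-2i-1} + F_{2i+2}$, which is again a Vajda-type identity but now with a target I can check against the already-proven \eqref{eqn:odd-docagne}. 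I expect the second route — chaining through Lemma~\ref{lem:dOcagne} — to be the most economical and least error-prone.
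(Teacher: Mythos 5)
You correctly identify Vajda's identity \eqref{eqn:vajda} as the right tool, and in your step (2) you even pin down the correct normalization: center the identity at $m=N-2i$ and choose offsets with $r+s=2i+1$ so that $F_{m+r+s}=F_{N+1}$. But you then substitute $r=-i$, $s=i+1$, which has $r+s=1$ and violates your own constraint; the displayed consequence $F_{N-3i}F_{N-i+1} = F_{N-2i}F_{N+1} + (-1)^{N-2i}F_{-i}F_{i+1}$ is therefore not an instance of Vajda's identity (with those offsets the middle term would be $F_{N-2i}F_{N-2i+1}$, not $F_{N-2i}F_{N+1}$), and the left-hand side $F_{N-3i}F_{N-i+1}$ does not simplify to $F_{N-i}F_{N-i+1}$. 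Your earlier attempt in the plan has the same kind of slip: with $m=i$, $r=N-2i$, $s=N-2i+1$ the middle term is $F_iF_{2N-3i+1}$, not $F_iF_{N+1}$. So none of the substitutions you actually write down establishes \eqref{eqn:odd-vadja}; the proposal stops at ``experiment until it lands,'' which is the gap. The fix is one step from what you wrote: keep the center $m=N-2i$ and take $r=i$, $s=i+1$ (these do satisfy $r+s=2i+1$ and need no negative indices). Vajda then gives $F_{N-i}F_{N-i+1} = F_{N-2i}F_{N+1} + (-1)^{N-2i}F_iF_{i+1}$, and since $N=2n-1$ is odd, $N-2i$ is odd, so the sign is $-1$ and rearranging yields exactly \eqref{eqn:odd-vadja}. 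This is precisely the paper's one-line proof.

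Your fallback route does not close the gap either: by Lemma~\ref{lem:dOcagne} the left side of \eqref{eqn:odd-vadja} equals $F_{N+2}F_{N-2i-1}+F_{2i+2}$, so reducing the problem to $F_{N-i}F_{N-i+1}+F_iF_{i+1} = F_{N+2}F_{N-2i-1}+F_{2i+2}$ merely restates an equivalent identity that you would still have to prove by the same kind of Vajda computation, while adding a dependence on Lemma~\ref{lem:dOcagne} that the direct argument does not need.
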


\begin{proof}
Observe that $N+1-N+2i=2i+1$ so we let $n=N-2i$, $r=i$, $s=i+1$ in Vadja's identity \eqref{eqn:vajda} to obtain
\begin{align*}
 F_{N-2i}F_{N+1} &= F_{N-2i+i}F_{N-2i+i+1} - (-1)^{N-2i}F_{i} F_{i+1}\\
 &= F_{N-i}F_{N-i+1} + F_{i} F_{i+1}.\qedhere
\end{align*}
\end{proof}

\begin{proof}[Proof of Theorem \ref{thm:arithfibcycle-i}]
 Let $N=2n-1$, $n\geq2$. Consider $F_N$ in base $F_{N+1}$. Thus,
\begin{align}
 x_0^2 &+ x_1^2 - y_1b - y_0 \notag\\
 &= (F_{i} + F_{i+1}k)^2 + (F_{N-i} + F_{N-i+1}k)^2 - (F_{N-2i-1} + F_{N-2i}k)(F_{N+1} + F_{N+2}k)\notag\\
 &\quad- (F_{2i+1} + F_{2i+2}k) \notag\\
 &=(F_{i}^2 + F_{N-i}^2 - F_{N-2i-1}F_{N+1} - F_{2i+1}) \label{eqn:term1}\\
 &\qquad+(2F_{i}F_{i+1} + 2F_{N-i}F_{N-i+1} - F_{N-2i-1}F_{N+2} - F_{N-2i}F_{N+1} - F_{2i+2})k \label{eqn:term2}\\
 &\qquad+(F_{i+1}^2 + F_{N-i+1}^2 - F_{N-2i}F_{N+2})k^2 \label{eqn:term3}
\end{align}
Let's take each term separately. The first term \eqref{eqn:term1} is just Theorem \ref{thm:fibplus} and has already been verified.

Let us consider the $k$-term, \eqref{eqn:term2}. Thus,
\begin{align*}
 &2F_{i}F_{i+1} + 2F_{N-i}F_{N-i+1} - F_{N-2i-1}F_{N+2} - F_{N-2i}F_{N+1} - F_{2i+2} \\
 &=2\left(F_{i}F_{i+1} + F_{N-i}F_{N-i+1}\right) - F_{N-2i-1}F_{N+2} - F_{N-2i}F_{N+1} - F_{2i+2} \\
 &=2F_{N-2i}F_{N+1} - F_{N-2i-1}F_{N+2} - F_{N-2i}F_{N+1} - F_{2i+2} \quad \text{(by \eqref{eqn:odd-vadja})}\\
 &=F_{N-2i}F_{N+1} - F_{N-2i-1}F_{N+2} - F_{2i+2} \\
 &= 0.\quad \text{(by \eqref{eqn:odd-docagne})}
\end{align*}

Let us consider the $k^2$-term, \eqref{eqn:term3}. Using Catalan's identity \eqref{eqn:catalan} with $r=i+1$ we obtain
\begin{align*}
 F_{i+1}^2 + F_{N-i+1}^2 - F_{N-2i}F_{N+2} &= F_{i+1}^2 + F_{N-2i} F_{N+2} + (-1)^{N-2i}F_{i+1}^2 - F_{N-2i}F_{N+2} \\
 &= F_{i+1}^2 + F_{N-2i} F_{N+2} - F_{i+1}^2 - F_{N-2i}F_{N+2} \\
 &= 0.\qedhere
\end{align*}
\end{proof}

Thus, iterates of a sum of squares map can be replaced by iterates on pairs of nonnegative integers. Consider pairs $[[r,r+1],[s,s+1]]$, where $r+y=N$, $N=2n-1$, $n\geq2$, and extend $\psi_{+}$ defined in Section \ref{s:fibonacci-ii} by $\psi_{+}([[r,r+1],[s,s+1]])=[[N-(2t+1),N-2t],[2t+1,2t+2]]$, where $t=\min(r,s)$. The following theorem is proven similarly to Theorem \ref{thm:psiplus}.

\begin{theorem}[Arithmetic $\psi_{+}$]\label{thm:arith-psiplus}
  Let $N=2n-1$, $n\geq1$. If $n=1$, then $[[0,1],[1,2]]$ is a fixed point of $\psi_{+}$. If $n\geq2$, then the iterates of $\psi_{+}$ on $[[0,1],[N,N+1]]$ comprise a cycle with initial element $[[0,1],[N,N+1]]$ and terminal element $[[n,n+1],[n-1,n]]$ if $n$ is even, or terminal element $[[n-1,n],[n,n+1]]$ if $n$ is odd.
\end{theorem}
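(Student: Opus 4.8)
The plan is to deduce Theorem~\ref{thm:arith-psiplus} from Theorem~\ref{thm:psiplus} by observing that the extended map is nothing but the original $\psi_{+}$ in disguise: the ``decoration'' $r\mapsto[r,r+1]$ attached to each index is transported rigidly by $\psi_{+}$, so the two maps are conjugate. Write $\widehat{\mathcal{P}}_{+}(N)$ for the set of pairs $[[r,r+1],[s,s+1]]$ with $r+s=N$ and $s$ odd (so $r$ even), and define the projection $\pi:\widehat{\mathcal{P}}_{+}(N)\to{\mathcal{P}}_{+}(N)$ by $\pi([[r,r+1],[s,s+1]])=[r,s]$. A decorated pair is determined by its two leading entries, so $\pi$ is a bijection, and $|\widehat{\mathcal{P}}_{+}(N)|=|{\mathcal{P}}_{+}(N)|=n$.

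The one computation to carry out is that $\pi$ intertwines the two maps, $\pi\circ\psi_{+}=\psi_{+}\circ\pi$. This is essentially a tautology: with $t=\min(r,s)$, which depends only on the leading entries, one has
\[
 \psi_{+}([[r,r+1],[s,s+1]])=[[\,N-(2t+1),\,N-2t\,],\,[\,2t+1,\,2t+2\,]],
\]
and since $N-2t=(N-(2t+1))+1$ and $2t+2=(2t+1)+1$, the right-hand side again lies in $\widehat{\mathcal{P}}_{+}(N)$ and projects to $[\,N-(2t+1),\,2t+1\,]=\psi_{+}([r,s])$. Hence $\psi_{+}$ on $\widehat{\mathcal{P}}_{+}(N)$ and $\psi_{+}$ on ${\mathcal{P}}_{+}(N)$ have identical orbit structures, matched up by $\pi$.

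The theorem then follows by transporting Theorem~\ref{thm:psiplus} back through $\pi^{-1}$. For $n=1$, $[0,1]$ is fixed by $\psi_{+}$, hence so is $\pi^{-1}([0,1])=[[0,1],[1,2]]$. For $n\geq2$, Theorem~\ref{thm:psiplus} produces a $\psi_{+}$-cycle through $[0,N]$ with terminal element $[n,n-1]$ when $n$ is even and $[n-1,n]$ when $n$ is odd; applying $\pi^{-1}$ yields the $\psi_{+}$-cycle through $\pi^{-1}([0,N])=[[0,1],[N,N+1]]$ with terminal element $\pi^{-1}([n,n-1])=[[n,n+1],[n-1,n]]$ for $n$ even and $\pi^{-1}([n-1,n])=[[n-1,n],[n,n+1]]$ for $n$ odd, which is precisely the claim.

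Alternatively, one may reproduce the proof of Theorem~\ref{thm:psiplus} directly at the decorated level, checking that $(\psi_{+})^{-1}([[r,r+1],[2s+1,2s+2]])$ equals $[[N-s,N-s+1],[s,s+1]]$ for $s$ odd and $[[s,s+1],[N-s,N-s+1]]$ for $s$ even (with $r=N-(2s+1)$), and that $\psi_{+}([[n,n+1],[n-1,n]])=[[0,1],[N,N+1]]$ when $n$ is even while $\psi_{+}([[n-1,n],[n,n+1]])=[[0,1],[N,N+1]]$ when $n$ is odd. In either approach the only point requiring care is the parity bookkeeping: the requirement that the second leading entry be odd forces $n$ even in the first terminal case and $n$ odd in the second, exactly as in Theorem~\ref{thm:psiplus}; there is no real obstacle beyond this routine case split.
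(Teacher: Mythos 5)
Your proposal is correct. The paper offers no separate argument for this theorem: it simply states that it ``is proven similarly to Theorem \ref{thm:psiplus}'', i.e., by repeating the preimage and terminal-element computations at the decorated level, which is exactly your alternative route, and your closing verifications (that $\psi_{+}([[n,n+1],[n-1,n]])=[[0,1],[N,N+1]]$ for $n$ even and $\psi_{+}([[n-1,n],[n,n+1]])=[[0,1],[N,N+1]]$ for $n$ odd, with the preimage case split on the parity of $s$) are precisely the computations that proof requires. Your primary route does something slightly different and arguably cleaner: since the decoration $r\mapsto[r,r+1]$ is determined by the leading entries, the projection $\pi$ is a bijection intertwining the extended map with the original $\psi_{+}$, so orbits, cycles, and fixed points transfer verbatim and the theorem becomes a formal corollary of Theorem \ref{thm:psiplus} rather than a re-run of its proof. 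What the conjugation buys is that no parity bookkeeping needs to be redone; what the paper's (implicit) direct route buys is independence from the earlier theorem, but both are valid and the identification $\pi^{-1}([n,n-1])=[[n,n+1],[n-1,n]]$, $\pi^{-1}([n-1,n])=[[n-1,n],[n,n+1]]$ is carried out correctly.
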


Assume $n\geq2$ and let $N=2n-1$. Define
$$
\Psi_{+}([[r,r+1],[s,s+1]])=F_{r}+F_{r+1}k.F_{s}+F_{s+1}k|_{b},\quad\text{where $b=F_{N+1}+F_{N+2}k$,}
$$
and $k$ is any nonnegative integer. By Theorem \ref{thm:arithfibcycle-i}, we have
$$
 S_b( F_{r}+F_{r+1}k.F_{s}+F_{s+1}k|_{b} ) = \Psi_{+}(\psi_{+}([[r,r+1],[s,s+1]])), \quad (r+s=N).
$$
Consequently, we have the following theorem.

\begin{theorem}[Arithmetic Fibonacci cycles of type I] \label{thm:arithFibonacci-i}
Let $N=2n-1$, $n\geq2$, and $k\geq0$. Then there is a cycle of $S_b$, $b=F_{N+1}+F_{N+2}k$, with initial element $F_{0}+F_{1}k.F_{N}+F_{N+1}k|_b$ and terminal element $F_{n}+F_{n+1}k.F_{n-1}+F_{n}k|_b$ if $n$ is even, or terminal element $F_{n-1}+F_{n}k.F_{n}+F_{n+1}k|_b$ if $n$ is odd.
\end{theorem}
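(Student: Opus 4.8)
The plan is to obtain this theorem purely by assembling two results already in hand: the semiconjugacy of Theorem~\ref{thm:arithfibcycle-i} and the orbit structure of the extended map $\psi_{+}$ in Theorem~\ref{thm:arith-psiplus}. First I would record the commuting relation explicitly. By Theorem~\ref{thm:arithfibcycle-i}, for every admissible pair $[[r,r+1],[s,s+1]]$ with $r+s=N$, $N=2n-1$, $n\geq2$, and $s$ odd, and for every $k\geq0$,
\[
 S_b\bigl(\Psi_{+}([[r,r+1],[s,s+1]])\bigr)=\Psi_{+}\bigl(\psi_{+}([[r,r+1],[s,s+1]])\bigr),\qquad b=F_{N+1}+F_{N+2}k,
\]
so that $\Psi_{+}$ intertwines $\psi_{+}$ with $S_b$ on $\Z|_b$.

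Second, I would verify the two points that make this intertwining useful. (i) The values of $\Psi_{+}$ are genuine two-digit numerals in base $b$: since $r\leq N$ gives $F_r\leq F_N<F_{N+1}$ and $F_{r+1}\leq F_{N+1}<F_{N+2}$, each digit $F_r+F_{r+1}k$ lies in $[0,b)$, and $b=F_{N+1}+F_{N+2}k\geq F_4=3$ for $n\geq2$, so $S_b$ really is the sum-of-squares-of-digits map on these numbers. (ii) $\Psi_{+}$ is injective on the admissible pairs: the leading digit $F_r+F_{r+1}k$ is strictly increasing in $r$ over the even indices $r\in\{0,2,\dots\}$ (already for $k=0$, since $F_0<F_2<F_4<\cdots$), so it determines $r$, hence $s=N-r$, hence the whole pair. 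Therefore a cycle of $\psi_{+}$ is carried by $\Psi_{+}$ to a cycle of $S_b$ of the same length.

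Third, I would invoke Theorem~\ref{thm:arith-psiplus}: for $n\geq2$ the iterates of $\psi_{+}$ on $[[0,1],[N,N+1]]$ form a cycle with initial element $[[0,1],[N,N+1]]$ and terminal element $[[n,n+1],[n-1,n]]$ if $n$ is even, or $[[n-1,n],[n,n+1]]$ if $n$ is odd. Applying $\Psi_{+}$ and reading off digits, the initial element becomes $F_{0}+F_{1}k.F_{N}+F_{N+1}k|_b$ and the terminal element becomes $F_{n}+F_{n+1}k.F_{n-1}+F_{n}k|_b$ ($n$ even) or $F_{n-1}+F_{n}k.F_{n}+F_{n+1}k|_b$ ($n$ odd), which is exactly the assertion. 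I would also note for completeness that $N=2n-1\geq3$, so $\psi_{+}([[0,1],[N,N+1]])=[[N-1,N],[1,2]]\neq[[0,1],[N,N+1]]$, confirming the cycle has length at least two and is not a fixed point.

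Since the substantive identities (built on d'Ocagne's and Vajda's identities via Lemmas~\ref{lem:dOcagne} and~\ref{lem:odd-vadja}) are already packaged inside Theorem~\ref{thm:arithfibcycle-i}, and the orbit count inside Theorem~\ref{thm:arith-psiplus}, there is no serious obstacle here; the only care needed is the bookkeeping of step two—checking that the Fibonacci expressions are admissible base-$b$ digits and that distinct admissible pairs give distinct numerals—so that the abstract $\psi_{+}$-cycle transfers faithfully to an $S_b$-cycle of the same length.
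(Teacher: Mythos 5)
Your proposal is correct and follows essentially the same route as the paper, which obtains Theorem~\ref{thm:arithFibonacci-i} directly by combining the semiconjugacy $S_b\circ\Psi_{+}=\Psi_{+}\circ\psi_{+}$ from Theorem~\ref{thm:arithfibcycle-i} with the orbit description in Theorem~\ref{thm:arith-psiplus}. The only difference is that you explicitly verify the digit-admissibility bound $F_r+F_{r+1}k<b$ and the injectivity of $\Psi_{+}$ so the $\psi_{+}$-cycle transfers faithfully; the paper leaves this bookkeeping implicit, so your write-up is, if anything, slightly more complete.
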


\section{Arithmetic progressions of Fibonacci cycles of type II}\label{s:arithFibonacci-ii}

This section shows that the fundamental cycles of type II admit extension to an arithmetic sequence of cycles in which the common differences are all Fibonacci numbers.
\begin{theorem}\label{thm:arithfibcycle-ii}
 Let $N=2n+1$, $n\geq1$, and $k\geq0$. Then
\begin{equation}
  S_b(x_1.x_0) = y_1.y_0|_b,
 \end{equation}
where
\begin{align*}
 b &= F_{N-1} + F_{N-2}k, \\
 x_0 &= F_{i} + F_{i-1}k, \\
 x_1 &= F_{N-i} + F_{N-i-1}k, \\
 y_0 &= F_{2i-1} + F_{2i-2}k, \\
 y_1 &= F_{N-(2i-1)} + F_{N-2i}k,
\end{align*}
and where $1\leq i\leq n$.
\end{theorem}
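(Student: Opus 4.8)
The plan is to mirror the proof of Theorem \ref{thm:arithfibcycle-i} exactly, since Theorem \ref{thm:arithfibcycle-ii} is the ``type II'' analogue. Write $N = 2n+1$, take $b = F_{N-1} + F_{N-2}k$, and expand the polynomial
$$
x_0^2 + x_1^2 - y_1 b - y_0
$$
in powers of $k$, substituting the given formulas for $x_0, x_1, y_0, y_1$. This produces a quadratic in $k$ whose constant term, linear coefficient, and quadratic coefficient must each be shown to vanish identically (as Fibonacci identities) for every $i$ with $1 \leq i \leq n$. The constant term is
$$
F_i^2 + F_{N-i}^2 - F_{N-(2i-1)}F_{N-1} - F_{2i-1},
$$
which is precisely identity \eqref{e:fibminus} of Theorem \ref{thm:fibminus}, already proven, so that term requires nothing new.

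Next I would isolate the $k$-coefficient, which will be
$$
2F_iF_{i-1} + 2F_{N-i}F_{N-i-1} - F_{N-2i}F_{N-1} - F_{N-2i+1}F_{N-2} - F_{2i-2}.
$$
Following the type I template, the key is to prove two auxiliary lemmas analogous to Lemmas \ref{lem:dOcagne} and \ref{lem:odd-vadja}: a ``d'Ocagne'' lemma giving $F_{N-2i}F_{N-1} = F_{N-2}F_{N-2i+1} + (\text{a single Fibonacci number})$, obtained by specializing \eqref{eqn:docagne} with suitable indices, and a ``Vajda'' lemma giving $F_iF_{i-1} + F_{N-i}F_{N-i-1} = F_{N-2i}F_{N-1}$ (up to sign bookkeeping), obtained by specializing \eqref{eqn:vajda} with $n = N-2i$, $r = i$, $s = i-1$ (noting that $(N-2i) + i = N - i$ and $(N-2i) + (i-1) = N-i-1$, and that the cross term has index $F_i F_{i-1}$ with the sign $(-1)^{N-2i}$ which is $-1$ since $N$ is odd). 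Substituting these two lemmas into the $k$-coefficient should collapse it to zero, just as in the type I proof.

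Finally, the $k^2$-coefficient will be
$$
F_{i-1}^2 + F_{N-i-1}^2 - F_{N-2i}F_{N-2},
$$
which should follow from a single application of Catalan's identity \eqref{eqn:catalan} (with center index $N-2i$... or rather applied to $F_{N-i-1}^2$ with offset $r = i-1$, so that $F_{N-i-1}^2 = F_{N-2i}F_{N-2} + (-1)^{N-2i}F_{i-1}^2$, and since $N-2i$ is odd the sign is $-1$, cancelling the $F_{i-1}^2$ term). I expect the only real obstacle is getting the index arithmetic in the two auxiliary lemmas exactly right—in particular tracking which of $i$, $N-i$ is the minimum and the parity sign $(-1)^{N-2i} = -1$—but these are bookkeeping matters entirely parallel to Section \ref{s:arithFibonacci-i}, with no new ideas required. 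Once the three coefficients vanish, $S_b(x_1.x_0) = y_1.y_0|_b$ follows from Beardon's characterization via the identity $x_1 b + x_0 = x_0^2 + x_1^2$ unwound, exactly as in the type I case.
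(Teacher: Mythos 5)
Your proposal is correct and matches the paper's approach: the paper proves this theorem by the same expansion in powers of $k$ as Theorem \ref{thm:arithfibcycle-i}, with the constant term handled by \eqref{e:fibminus}, and your two auxiliary identities are precisely the paper's Lemmas \ref{e:lemma3} and \ref{e:lemma4}, with the $k^2$-coefficient killed by Catalan's identity exactly as you describe. No substantive differences.
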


\begin{proof} The proof is similar to that of Theorem \ref{thm:arithfibcycle-i}, using Lemmas \ref{e:lemma3} and \ref{e:lemma4}. \end{proof}

\begin{lemma}
\begin{equation}\label{e:lemma3}
  F_i F_{i-1} + F_{N-i} F_{N-i-1} = F_{N-2i} F_{N-1}.
\end{equation}
\end{lemma}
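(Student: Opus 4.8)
The plan is to derive \eqref{e:lemma3} as a one-line specialization of Vajda's identity \eqref{eqn:vajda}, in exact parallel with the proof of Lemma~\ref{lem:odd-vadja}; indeed \eqref{e:lemma3} is the type-II analogue of \eqref{eqn:odd-vadja}, obtained by shifting the second index \emph{down} rather than up. As in the rest of Section~\ref{s:arithFibonacci-ii}, the relevant setting is $N=2n+1$ (so $N$ is odd), with $1\le i\le n$, and this oddness is what makes the sign work out.

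The key step is the index bookkeeping. In \eqref{eqn:vajda} take the running index to be $N-2i$, and set $r=i$, $s=i-1$. Then $(N-2i)+r=N-i$, $(N-2i)+s=N-i-1$, and $(N-2i)+r+s=N-1$, so Vajda's identity becomes
\[
 F_{N-i}F_{N-i-1} = F_{N-2i}F_{N-1} + (-1)^{N-2i}F_iF_{i-1}.
\]
Since $N=2n+1$ is odd, $N-2i$ is odd, hence $(-1)^{N-2i}=-1$; transposing the last term yields precisely $F_iF_{i-1}+F_{N-i}F_{N-i-1}=F_{N-2i}F_{N-1}$, which is \eqref{e:lemma3}.

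There is essentially no obstacle here: the only thing requiring care is matching parity, i.e.\ checking that the sign produced by Vajda's identity is $-1$ (this is exactly where $N=2n+1$ is used; for even $N$ one would instead get a difference of cross-terms), and confirming that the chosen shift is the one producing $F_{N-1}$ on the right together with both cross-terms $F_iF_{i-1}$ and $F_{N-i}F_{N-i-1}$ on the left. For $i$ in the stated range all subscripts are nonnegative, so no boundary convention for $F_{-1}$ is needed. This lemma, together with the companion Lemma~\ref{e:lemma4} (the d'Ocagne-type identity supplying the coefficient of $k$), then feeds the term-by-term verification of Theorem~\ref{thm:arithfibcycle-ii} in the same way that \eqref{eqn:odd-vadja} and \eqref{eqn:odd-docagne} did for Theorem~\ref{thm:arithfibcycle-i}.
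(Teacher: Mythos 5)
Your proof is correct: the substitution $n=N-2i$, $r=i$, $s=i-1$ in Vajda's identity \eqref{eqn:vajda} gives exactly $F_{N-i}F_{N-i-1}=F_{N-2i}F_{N-1}+(-1)^{N-2i}F_iF_{i-1}$, and since $N=2n+1$ is odd the sign is $-1$, yielding \eqref{e:lemma3}. The paper states this lemma without proof, but your argument is precisely the type-II analogue of the paper's proof of Lemma~\ref{lem:odd-vadja}, so it is essentially the same approach as intended.
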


\begin{lemma}
\begin{equation}\label{e:lemma4}
  F_{N-2i} F_{N-1} = F_{N-(2i-1)} F_{N-2} + F_{2i-2}
\end{equation}
\end{lemma}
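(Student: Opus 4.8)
The plan is to prove \eqref{e:lemma4} exactly as Lemma~\ref{lem:dOcagne} (its type~I counterpart) was proved, namely by a single application of d'Ocagne's identity in the general bilinear form
\[
 F_p F_{q+1} - F_{p+1} F_q = (-1)^q F_{p-q},
\]
which is the same form of \eqref{eqn:docagne} invoked there. I read \eqref{e:lemma4} with $N = 2n+1$, $n\geq1$, and $1\leq i\leq n$, the range inherited from the ambient Theorem~\ref{thm:arithfibcycle-ii}; in particular $2i-2\geq0$, so no negative Fibonacci index appears in the statement.

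First I would choose the indices that make the two products in \eqref{e:lemma4} appear on the nose: set $p = N-2$ and $q = N-2i$. The identity then becomes
\[
 F_{N-2}\,F_{N-2i+1} - F_{N-1}\,F_{N-2i} = (-1)^{N-2i}\,F_{(N-2)-(N-2i)} = (-1)^{N-2i}\,F_{2i-2}.
\]
Since $N=2n+1$ is odd, $N-2i$ is odd as well, so $(-1)^{N-2i}=-1$; transposing and writing $N-2i+1 = N-(2i-1)$ gives
\[
 F_{N-2i}\,F_{N-1} = F_{N-(2i-1)}\,F_{N-2} + F_{2i-2},
\]
which is \eqref{e:lemma4}. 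One could instead take $p=N-2i$, $q=N-2$ directly, but that produces the reflected index $F_{(N-2i)-(N-2)}=F_{-(2i-2)}$, which must first be rewritten via $F_{-m}=(-1)^{m+1}F_m$ before collecting signs --- an equivalent but slightly less tidy route, which is presumably why the first choice is preferable.

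I do not expect a genuine obstacle. The only delicate point is sign bookkeeping: one must track the parity of $N$, hence of $N-2i$, so that $(-1)^{N-2i}$ collapses to $-1$ (and, in the alternative derivation, handle the lone negative index). The degenerate cases are harmless --- when $i=1$ the term $F_{2i-2}=F_0=0$ and the identity is the triviality $F_{N-2}F_{N-1}=F_{N-1}F_{N-2}$, while the endpoint $i=n$ is covered by the same computation. For context: together with \eqref{e:lemma3}, this lemma drives the proof of Theorem~\ref{thm:arithfibcycle-ii} just as \eqref{eqn:odd-vadja} and \eqref{eqn:odd-docagne} drove Theorem~\ref{thm:arithfibcycle-i} --- expand $x_0^2+x_1^2-y_1b-y_0$ as a quadratic in $k$, match the constant term to \eqref{e:fibminus}, annihilate the $k$-term using \eqref{e:lemma3} then \eqref{e:lemma4}, and annihilate the $k^2$-term with Catalan's identity \eqref{eqn:catalan}.
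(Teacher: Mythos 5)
Your proof is correct, and it mirrors exactly the technique the paper uses for the type~I counterpart (Lemma~\ref{lem:dOcagne}): the paper itself leaves \eqref{e:lemma4} unproved, but its intended argument is precisely this single application of the bilinear d'Ocagne identity $F_pF_{q+1}-F_{p+1}F_q=(-1)^qF_{p-q}$ with the parity of $N-2i$ collapsing the sign. Your choice $p=N-2$, $q=N-2i$, the sign bookkeeping, and the check of the degenerate case $i=1$ are all sound, so there is nothing to add.
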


Thus, iterates of a sum of squares map can be replaced by iterates on pairs of nonnegative integers. Consider pairs $[[r,r-1],[s,s-1]]$, where $r+s=N$, $N=2n+1$, $n\geq1$, and extend $\psi_{-}$ defined in Section \ref{s:fibonacci-i} by $\psi_{-}([[r,r-1],[s,s-1]])=[[N-(2t-1),N-2t],[2t-1,2t-2]]$, where $t=\min(r,s)$. The following theorem is proven similarly to Theorem \ref{thm:psiminus}.

\begin{theorem}[Arithmetic $\psi_{-}$]\label{thm:arith-psiminus}
  Let $N=2n+1$, $n\geq1$. The iterates of $\psi_{-}$ on $[[2,1],[N-2,N-3]]$ comprise a cycle with initial element $[[2,1],[N-2,N-3]]$  and with terminal element $[[n,n-1],[n+1,n]]$ if $n$ is even, or terminal element $[[n+1,n],[n,n-1]]$ if $n$ is odd.
\end{theorem}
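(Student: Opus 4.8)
The plan is to deduce this from Theorem \ref{thm:psiminus} by observing that the arithmetic version of $\psi_{-}$ introduced just above is only a relabelling of the original one. A pair of pairs $[[r,r-1],[s,s-1]]$ with $r+s=N$ is completely determined by the single pair $[r,s]\in\mathcal{P}_{-}(N)$, and (since the first entries occurring on $\mathcal{P}_{-}(2n+1)$ lie in $\{2,\dots,2n\}$ and the second in $\{1,\dots,2n-1\}$, so that $r-1,s-1\geq0$) this assignment $\iota([r,s])=[[r,r-1],[s,s-1]]$ is a bijection onto the domain used in Theorem \ref{thm:arith-psiminus}. First I would compare the defining formula $\psi_{-}([[r,r-1],[s,s-1]])=[[N-(2t-1),N-2t],[2t-1,2t-2]]$, $t=\min(r,s)$, with the definition of $\psi_{-}$ on $\mathcal{P}_{-}(N)$ from Theorem \ref{thm:psiminus}: in both cases the two relevant entries of the image are $N-(2t-1)$ and $2t-1$, the trailing entries being obtained by subtracting $1$. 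Hence $\psi_{-}\circ\iota=\iota\circ\psi_{-}$, so $\iota$ is a conjugacy of the two dynamical systems, and the $\psi_{-}$-orbit of $[[2,1],[N-2,N-3]]=\iota([2,N-2])$ is exactly the $\iota$-image of the $\psi_{-}$-orbit of $[2,N-2]$.

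To mirror the proof of Theorem \ref{thm:psiminus} directly, as the section suggests, I would instead record the inverse on pairs of pairs --- namely $(\psi_{-})^{-1}([[p,p-1],[2s-1,2s-2]])$ equals $[[N-s,N-s-1],[s,s-1]]$ when $s$ is odd and $[[s,s-1],[N-s,N-s-1]]$ when $s$ is even, with $p=N-(2s-1)$ --- and then verify by substitution (noting $t=\min(n,n+1)=\min(n+1,n)=n$) that $\psi_{-}([[n,n-1],[n+1,n]])=[[2,1],[N-2,N-3]]$ for $n$ even and $\psi_{-}([[n+1,n],[n,n-1]])=[[2,1],[N-2,N-3]]$ for $n$ odd. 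Either route shows that $[[2,1],[N-2,N-3]]$ and the claimed terminal element lie in one $\psi_{-}$-orbit; since the domain is finite and $\psi_{-}$ is a bijection of it, that orbit is a cycle, which is what the theorem asserts. Strictly, for $n=1$ and $n=2$ this ``cycle'' degenerates to the fixed point $[[2,1],[N-2,N-3]]$, in agreement with the corresponding clause of Theorem \ref{thm:psiminus}, so a small-$n$ caveat should accompany the statement.

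I do not expect a genuine obstacle here: all of the Fibonacci arithmetic has already been packaged into Theorem \ref{thm:arithfibcycle-ii} (via Lemmas \ref{e:lemma3} and \ref{e:lemma4}), and none of it is needed for the present purely combinatorial claim about $\psi_{-}$. The only care required is the familiar bookkeeping with $t=\min(r,s)$ and the parity of $s$, precisely as in Theorem \ref{thm:psiminus}, together with the routine check that $\iota$ is well-defined on the orbit in question.
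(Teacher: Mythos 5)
Your proposal is correct and matches the paper's intent: the paper offers no separate argument, stating only that the result ``is proven similarly to Theorem \ref{thm:psiminus},'' which is exactly your second route (compute the inverse on pairs of pairs and check that the claimed terminal element maps to $[[2,1],[N-2,N-3]]$). Your conjugacy map $\iota([r,s])=[[r,r-1],[s,s-1]]$ is just a clean formalization of the same observation---the trailing entries are determined by the leading ones---and your caveat that for $n=1,2$ the orbit degenerates to a fixed point is a fair reading of the same small-$n$ clause in Theorem \ref{thm:psiminus}.
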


Define
$$
\Psi_{-}([[r,r-1],[s,s-1]])=F_{r}+F_{r-1}k.F_{s}+F_{s-1}k|_{b},
$$
where $b=F_{2n}+F_{2n-1}k$, $n\geq2$, and $k\geq0$. By Theorem \ref{thm:arithfibcycle-i}, we have
$$
 S_b( F_{r}+F_{r-1}k.F_{s}+F_{s-1}k|_{b} ) = \Psi_{-}(\psi_{-}([[r,r-1],[s,s-1]])),\quad (r+s=N).
$$
Consequently, we have the following theorem.

\begin{theorem}[Arithmetic Fibonacci cycles of type II] \label{thm:arithFibonacci-ii}
Let $N=2n+1$, $n\geq2$, and $k\geq0$. Then there is a cycle of $S_b$, $b=F_{2n}+F_{2n-1}k$, with initial element $F_{2}+F_{1}k.F_{2n-1}+F_{2n-2}k|_b$ and terminal element $F_{n}+F_{n-1}k.F_{n+1}+F_{n}k|_b$ if $n$ is even, or terminal element $F_{n+1}+F_{n}k.F_{n}+F_{n-1}k|_b$ if $n$ is odd.
\end{theorem}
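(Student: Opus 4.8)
The plan is to deduce this exactly as Theorem~\ref{thm:arithFibonacci-i} was deduced from Theorems~\ref{thm:arithfibcycle-i} and~\ref{thm:arith-psiplus}, now using the type~II analogues Theorems~\ref{thm:arithfibcycle-ii} and~\ref{thm:arith-psiminus}. Fix $N=2n+1$ with $n\geq2$ and $k\geq0$, put $b=F_{2n}+F_{2n-1}k=F_{N-1}+F_{N-2}k$, and for a pair $P=[[r,r-1],[s,s-1]]$ with $r+s=N$ and $s$ odd set $\Psi_{-}(P)=F_{r}+F_{r-1}k.F_{s}+F_{s-1}k|_b$. The first step is to record the semiconjugacy $S_b\circ\Psi_{-}=\Psi_{-}\circ\psi_{-}$ on such pairs. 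This is a rephrasing of Theorem~\ref{thm:arithfibcycle-ii}: with $x_1=F_{N-i}+F_{N-i-1}k$, $x_0=F_i+F_{i-1}k$ one has $x_1.x_0=\Psi_{-}([[N-i,N-i-1],[i,i-1]])$, and with $y_1=F_{N-(2i-1)}+F_{N-2i}k$, $y_0=F_{2i-1}+F_{2i-2}k$ one has $y_1.y_0|_b=\Psi_{-}(\psi_{-}([[N-i,N-i-1],[i,i-1]]))$, using $\min(N-i,i)=i$ for $1\leq i\leq n$. (Theorem~\ref{thm:arithfibcycle-ii} is itself an identity polynomial in $k$ whose $k^0$, $k^1$, $k^2$ coefficients vanish by Theorem~\ref{thm:fibminus}, by \eqref{e:lemma3}--\eqref{e:lemma4}, and by Catalan's identity \eqref{eqn:catalan}, just as in the proof of Theorem~\ref{thm:arithfibcycle-i}.)

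The second step is to check that $\Psi_{-}$ carries the $\psi_{-}$-orbit of $[[2,1],[N-2,N-3]]$ injectively into $\Z|_b$, so that pushing that orbit forward yields an $S_b$-cycle of the same length. Injectivity is immediate, since for fixed $N$ a pair is determined by its even entry $r$ and $r\mapsto F_r+F_{r-1}k$ is strictly increasing on even $r\geq2$ for every $k\geq0$. For membership in $\Z|_b$ one checks, using Theorem~\ref{thm:psiminus}, that this orbit is a genuine cycle avoiding the fixed point $[2n,1]$, so that every pair in it has both indices in $\{2,\dots,N-2\}$; then each coordinate satisfies $0<F_r+F_{r-1}k\leq F_{N-2}+F_{N-3}k<b$, the last step because $b-(F_{N-2}+F_{N-3}k)=F_{N-3}+F_{N-4}k>0$, and $b\geq F_4=3>2$.

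The third step is to invoke Theorem~\ref{thm:arith-psiminus}: the $\psi_{-}$-orbit of $[[2,1],[N-2,N-3]]$ is a cycle with that initial element and with terminal element $[[n,n-1],[n+1,n]]$ if $n$ is even, or $[[n+1,n],[n,n-1]]$ if $n$ is odd. Applying $\Psi_{-}$ and using $F_1=F_2=1$, the initial element maps to $F_{2}+F_{1}k.F_{2n-1}+F_{2n-2}k|_b$ and the terminal element maps to $F_{n}+F_{n-1}k.F_{n+1}+F_{n}k|_b$ ($n$ even) or $F_{n+1}+F_{n}k.F_{n}+F_{n-1}k|_b$ ($n$ odd); by steps one and two this image is an $S_b$-cycle of the same length, proving the theorem.

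No serious obstacle is expected: the substance has been front-loaded into Theorem~\ref{thm:arithfibcycle-ii} (a polynomial identity in $k$ assembled from the Fibonacci identities of Section~\ref{s:fibonacci}) and Theorem~\ref{thm:arith-psiminus} (the orbit combinatorics of $\psi_{-}$). The only point needing care is the second step---injectivity of $\Psi_{-}$ and, more importantly, keeping each linear-in-$k$ digit inside $[0,b)$, so that $\Psi_{-}(P)$ is genuinely a two-digit base-$b$ integer on which $S_b$ acts as claimed. One should also flag the degenerate case $n=2$: by Theorem~\ref{thm:psiminus} the ``cycle'' is then the single fixed point $[2,N-2]$ and the displayed initial and terminal elements coincide.
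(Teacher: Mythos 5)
Your proposal follows exactly the paper's route: Theorem \ref{thm:arithfibcycle-ii} (proved via Lemmas \eqref{e:lemma3}--\eqref{e:lemma4} as a polynomial identity in $k$) gives the semiconjugacy $S_b\circ\Psi_{-}=\Psi_{-}\circ\psi_{-}$, and Theorem \ref{thm:arith-psiminus} supplies the orbit of $[[2,1],[N-2,N-3]]$, from which the theorem follows immediately. Your added checks (injectivity of $\Psi_{-}$, the digit bounds $0<F_r+F_{r-1}k<b$, and the degenerate $n=2$ case where initial and terminal elements coincide) are correct refinements of details the paper leaves implicit, not a different argument.
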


\section{Generalizations to Pell polynomials}\label{s:pell}

\begin{definition}[Pell polynomials, \cite{koshy2014}]
  The Pell polynomials are defined recursively by
\begin{align}
 p_0(x) &= 0, \quad p_1(x) = 1, \\
 p_n(x) &= 2x p_{n-1}(x) + p_{n-2}(x), \quad n\geq2,
\end{align}
Note that $p_2(x)=2x$. See Theorem \ref{thm:pellminus}.
\end{definition}

\begin{remark}
The following results apply equally well to the Fibonacci polynomials defined by $f_n(x)=p_n(x/2)$, $n\geq1$ \cite{koshy2014}.
\end{remark}

\begin{theorem}[Pell polynomial identities, \cite{koshy2014}]\label{thm:pell-identities}
The Pell polynomials satisfy the following identities:
\begin{description}
  \item[Pell-Cassini identity]
  \begin{equation}\label{eqn:pell-cassini}
    p_{n}^2(x) = p_{n+1}(x) p_{n-1}(x) + (-1)^{n+1}
  \end{equation}
  \item[Pell-Catalan identity]
  \begin{equation}\label{eqn:pell-catalan}
    p_{n}^2(x) = p_{n+r}(x)p_{n-r}(x) + (-1)^{n-r} p_r^2(x)
  \end{equation}
  \item[Pell-Vajda identity]
  \begin{equation}\label{eqn:pell-vajda}
    p_{n+r}(x) p_{n+s}(x) = p_{n}(x)p_{n+r+s}(x) + (-1)^n p_r(x) p_s(x)
  \end{equation}
  \item[Pell-Lucas identity]
  \begin{equation}\label{eqn:pell-lucas}
    p_{2n+1}(x) = p_{n+1}^2(x) + p_{n}^2(x)
  \end{equation}
  \item[Pell-d'Ocagne identity]
  \begin{equation}\label{eqn:pell-docagne}
    2x\,p_{2n}(x) = p_{n+1}^2(x) - p_{n-1}^2(x)
  \end{equation}
\end{description}
\end{theorem}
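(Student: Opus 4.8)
The plan is to deduce all five identities from the Binet-type closed form for the Pell polynomials, after which each identity reduces to a short algebraic manipulation. First I would record that closed form: the recurrence $p_n(x) = 2x\,p_{n-1}(x) + p_{n-2}(x)$ has characteristic polynomial $t^2 - 2xt - 1$, whose roots $\alpha,\beta$ satisfy $\alpha+\beta = 2x$, $\alpha\beta = -1$, and hence $(\alpha-\beta)^2 = (\alpha+\beta)^2 - 4\alpha\beta = 4(x^2+1)$. Working in the ring $\mathbb{Q}(x)[\sqrt{x^2+1}]$ (or, equivalently, treating $\alpha,\beta$ as formal symbols subject only to $\alpha+\beta = 2x$ and $\alpha\beta = -1$), a one-line induction comparing $n=0,1$ and the recurrence on both sides gives $p_n(x) = \dfrac{\alpha^n - \beta^n}{\alpha - \beta}$. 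Since both sides of each of the five claimed identities lie in $\mathbb{Z}[x]$, it suffices to verify them in this larger ring.

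With the closed form in hand, each identity becomes a polynomial identity in $\alpha,\beta$ that collapses using $\alpha\beta = -1$. For Pell--Catalan \eqref{eqn:pell-catalan} I would compute $(\alpha-\beta)^2\bigl(p_{n+r}(x)p_{n-r}(x) - p_n^2(x)\bigr) = (\alpha^{n+r}-\beta^{n+r})(\alpha^{n-r}-\beta^{n-r}) - (\alpha^n-\beta^n)^2 = 2(\alpha\beta)^n - \alpha^{n+r}\beta^{n-r} - \alpha^{n-r}\beta^{n+r} = -(\alpha\beta)^{n-r}(\alpha^r-\beta^r)^2$; dividing by $(\alpha-\beta)^2$ and using $\alpha\beta = -1$ gives $p_{n+r}(x)p_{n-r}(x) - p_n^2(x) = -(-1)^{n-r}p_r^2(x)$, which rearranges to \eqref{eqn:pell-catalan}. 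Pell--Cassini \eqref{eqn:pell-cassini} is then the special case $r=1$ with $p_1(x)=1$. Pell--Vajda \eqref{eqn:pell-vajda} comes out the same way from $(\alpha-\beta)^2\bigl(p_{n+r}(x)p_{n+s}(x) - p_n(x)p_{n+r+s}(x)\bigr) = (\alpha\beta)^n(\alpha^r-\beta^r)(\alpha^s-\beta^s)$. For Pell--Lucas \eqref{eqn:pell-lucas}, adding $(\alpha-\beta)^2 p_{n+1}^2(x)$ and $(\alpha-\beta)^2 p_n^2(x)$ and using $\alpha^2+1 = \alpha(\alpha-\beta)$, $\beta^2+1 = -\beta(\alpha-\beta)$ and $\alpha\beta+1 = 0$ (all consequences of $\alpha\beta=-1$) collapses the total to $(\alpha-\beta)(\alpha^{2n+1}-\beta^{2n+1})$, so that $p_{n+1}^2(x) + p_n^2(x) = p_{2n+1}(x)$. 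Pell--d'Ocagne \eqref{eqn:pell-docagne} follows identically from $p_{n+1}^2(x) - p_{n-1}^2(x)$, where the extra factor $2x = \alpha+\beta$ appears naturally via $\alpha^2-1 = \alpha(\alpha+\beta)$, $\beta^2-1 = \beta(\alpha+\beta)$.

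Two alternatives are worth noting. Each identity can equally be proved by a direct induction on $n$ from the recurrence; and since the paper already records the classical Fibonacci versions \eqref{eqn:cassini}--\eqref{eqn:docagne}, each Pell identity specializes, under $x \mapsto x/2$ together with the stated relation $f_n(x) = p_n(x/2)$ and then $x=1$, to the corresponding numerical identity, so one could simply cite the polynomial forms from \cite{koshy2014}. I would nonetheless include the Binet derivation above for self-containedness. The main difficulty here is bookkeeping rather than conceptual: one must be careful that the closed form is used over an appropriate ring and that the signs $(-1)^{n+1}$ and $(-1)^{n-r}$ are tracked correctly as powers of $\alpha\beta = -1$; everything else is routine expansion.
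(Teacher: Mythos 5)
Your proposal is correct, but it does something the paper never does: the paper states Theorem \ref{thm:pell-identities} as a known result and simply cites Koshy \cite{koshy2014}, offering no proof at all, whereas you supply a self-contained derivation from the Binet-type closed form. I checked your algebra and it goes through: with $\alpha+\beta=2x$, $\alpha\beta=-1$ one indeed has $p_n(x)=(\alpha^n-\beta^n)/(\alpha-\beta)$, the Catalan computation collapses to $-(\alpha\beta)^{n-r}(\alpha^r-\beta^r)^2$ giving \eqref{eqn:pell-catalan} with the right sign, Cassini \eqref{eqn:pell-cassini} is the case $r=1$, the Vajda expansion yields $(\alpha\beta)^n(\alpha^r-\beta^r)(\alpha^s-\beta^s)$, and the factorizations $\alpha^2+1=\alpha(\alpha-\beta)$, $\beta^2+1=-\beta(\alpha-\beta)$, $\alpha^2-1=\alpha(\alpha+\beta)$, $\beta^2-1=\beta(\alpha+\beta)$ do exactly the work needed for \eqref{eqn:pell-lucas} and \eqref{eqn:pell-docagne}, the last producing the factor $\alpha+\beta=2x$. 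The trade-off is the usual one: the citation keeps the paper lean and treats these as textbook facts, while your Binet argument makes the section self-contained, works uniformly in $\mathbb{Z}[x]$ adjoined $\sqrt{x^2+1}$, and immediately covers the Fibonacci-polynomial specialization $f_n(x)=p_n(x/2)$ mentioned in the paper. One small caution on your closing remark: the specialization only flows from the Pell identities down to the numerical Fibonacci identities \eqref{eqn:cassini}--\eqref{eqn:docagne}, so the paper's recorded Fibonacci versions could not replace a proof of the polynomial statements; you correctly propose citing the polynomial forms rather than lifting the numerical ones, so this is a point of emphasis, not an error.
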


\begin{remark}
The ``Pell'' prefix is intended as descriptive and is unrelated to the historical discovery of the identities.
\end{remark}

By Theorem \ref{thm:pell-identities}, the same properties discovered in Sections \ref{s:fibonacci-i} and \ref{s:fibonacci-ii} apply to the Pell polynomials. Consequently, we only summarize the results.

\begin{theorem}[Pell identity of type I]\label{thm:pell-type-I}
 Let $N=2n-1$, $n\geq1$. Then
 \begin{equation}\label{e:pell-type-II}
    p_{N-i}^2(x) + p_i^2(x) = p_{N-(2j+1)}(x) p_{N+1}(x) + p_{2j+1}(x), \\
 \end{equation}
where $j=\min(i,N-i)$ and $0\leq i \leq n-1$. Observe that $N-(2j+1)$ is always even, and that the indices $N-(2j+1)$ and $2j+1$ sum to $N$.
\end{theorem}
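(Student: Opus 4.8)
The plan is to mirror exactly the proof of Theorem \ref{thm:fibplus}, replacing each Fibonacci identity with its Pell-polynomial analogue from Theorem \ref{thm:pell-identities}. First I would set $j=\min(i,N-i)$ and observe, as in the Fibonacci case, that $p_{N-i}^2(x)+p_i^2(x)=p_{N-j}^2(x)+p_j^2(x)$ since $\{i,N-i\}=\{j,N-j\}$. Then I would apply the Pell–Catalan identity \eqref{eqn:pell-catalan} to the term $p_{N-j}^2(x)$: writing $N-j=m+r$ with $m=(N+1)-(j+1)$ and $r=j+1$ (so that $m-r=(N+1)-2(j+1)=N-(2j+1)$ and $m+r=N-j$), Catalan gives
$$
p_{N-j}^2(x)=p_{N-(2j+1)}(x)\,p_{N+1}(x)+(-1)^{N-(2j+1)}p_{j+1}^2(x).
$$

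Next I would check the sign: $N=2n-1$ is odd and $2j+1$ is odd, so $N-(2j+1)$ is even and $(-1)^{N-(2j+1)}=1$. Hence the expression collapses to
$$
p_{N-i}^2(x)+p_i^2(x)=p_{N-(2j+1)}(x)\,p_{N+1}(x)+p_{j+1}^2(x)+p_j^2(x).
$$
Finally I would invoke the Pell–Lucas identity \eqref{eqn:pell-lucas} in the form $p_{2j+1}(x)=p_{j+1}^2(x)+p_j^2(x)$ to replace the last two terms by $p_{2j+1}(x)$, yielding \eqref{e:pell-type-II}. The side remarks — that $N-(2j+1)$ is even and that $N-(2j+1)+(2j+1)=N$ — are immediate from parity and arithmetic, and the range $0\le i\le n-1$ guarantees $j=\min(i,N-i)=i$ actually lies in $\{0,\dots,n-1\}$ so that all indices appearing are nonnegative and the identities apply.

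I expect no genuine obstacle here: the proof is a formal transcription of the Fibonacci argument, and the only thing to be careful about is bookkeeping of indices in the Catalan substitution (choosing $m$ and $r$ correctly so that $m-r$ and $m+r$ land on the right values) and confirming the parity computation that kills the sign $(-1)^{N-(2j+1)}$. Since all the Pell identities needed are stated verbatim in Theorem \ref{thm:pell-identities} with the same shape as their Fibonacci counterparts, the chain of equalities goes through line for line, and one may present it in a single \texttt{align*} block exactly parallel to the proof of Theorem \ref{thm:fibplus}.
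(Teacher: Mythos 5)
Your argument is correct and is precisely the paper's intended proof: the paper obtains Theorem \ref{thm:pell-type-I} by observing that the Pell identities of Theorem \ref{thm:pell-identities} mirror the Fibonacci ones, so the proof of Theorem \ref{thm:fibplus} (Pell--Catalan with central index $N-j$ and shift $j+1$, the parity of $N-(2j+1)$ killing the sign, then Pell--Lucas) transcribes line for line, exactly as you do. The only blemish is a bookkeeping slip in your parenthetical: with $m=N-j$ and $r=j+1$ one has $m+r=N+1$, not $N-j$; the displayed substitution $p_{N-j}^2(x)=p_{N-(2j+1)}(x)\,p_{N+1}(x)+(-1)^{N-(2j+1)}p_{j+1}^2(x)$ is nevertheless the correct instance of \eqref{eqn:pell-catalan}, so the proof stands.
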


\begin{theorem}[Pell cycles of type I]\label{thm:pellplus}
 The iterates of $S_b$, $b=p_{2n}(x)$, yield a cycle with initial element $p_0(x).p_{2n-1}(x)|_b$ and terminal element $p_{n}(x).p_{n-1}(x)|_b$ if $n$ is even, or terminal element $p_{n-1}(x).p_{n}(x)|_b$ if $n$ is odd.
\end{theorem}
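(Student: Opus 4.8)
The plan is to reduce Theorem~\ref{thm:pellplus} to the combinatorial statement about the iteration map $\psi_{+}$ on ${\mathcal{P}}_{+}(N)$, exactly mirroring the development in Section~\ref{s:fibonacci-i}. First I would record the key digit identity: by Theorem~\ref{thm:pell-type-I} (identity~\eqref{e:pell-type-II}), whenever $r+s=N=2n-1$ with $s$ odd, we have
$$
 p_{r}^2(x) + p_{s}^2(x) = p_{N-(2t+1)}(x)\,p_{N+1}(x) + p_{2t+1}(x), \quad t=\min(r,s),
$$
where $N-(2t+1)$ is even and $(N-(2t+1))+(2t+1)=N$. The crucial structural observations are that $0\le p_{s}(x) < p_{2n}(x) = p_{N+1}(x)$ for $0\le s\le N$ (so $p_{r}(x).p_{s}(x)|_b$ is a legitimate two-``digit'' representation in base $b=p_{2n}(x)$ as polynomials with nonnegative integer coefficients, once $x$ is large enough, or formally as polynomials), and that the right-hand side is precisely the base-$b$ representation $p_{N-(2t+1)}(x).p_{2t+1}(x)|_b$. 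Hence
$$
 S_b\big(p_{r}(x).p_{s}(x)|_b\big) = p_{N-(2t+1)}(x).p_{2t+1}(x)|_b = \Psi_{+}\big(\psi_{+}([r,s])\big),
$$
which is the Pell analogue of~\eqref{eqn:psiplus}.

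Next I would invoke Theorem~\ref{thm:psiplus}: for $N=2n-1$ with $n\ge2$, the iterates of $\psi_{+}$ starting from $[0,N]$ form a cycle whose terminal element is $[n,n-1]$ if $n$ is even and $[n-1,n]$ if $n$ is odd. Applying the semiconjugacy $\Psi_{+}$ from the previous paragraph term by term, the $S_b$-iterates of $p_0(x).p_{2n-1}(x)|_b$ trace out the $\Psi_{+}$-image of this $\psi_{+}$-cycle, which is therefore a genuine cycle of $S_b$ (distinct pairs map to distinct polynomials since the pair $[r,s]$ is recovered from $p_{r}(x).p_{s}(x)|_b$), with initial element $\Psi_{+}([0,N]) = p_0(x).p_{2n-1}(x)|_b$ and terminal element $\Psi_{+}([n,n-1]) = p_{n}(x).p_{n-1}(x)|_b$ for $n$ even, or $\Psi_{+}([n-1,n]) = p_{n-1}(x).p_{n}(x)|_b$ for $n$ odd. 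That the final step from the terminal element back to the initial element is correct is exactly the identity $p_{n-1}^2(x)+p_{n}^2(x) = p_{2n-1}(x) = p_0(x)\,p_{2n}(x)+p_{2n-1}(x)$, i.e.\ the Pell--Lucas identity~\eqref{eqn:pell-lucas}, which closes the loop.

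The only point requiring genuine care — the place I expect to be the main obstacle — is justifying that $p_{r}(x).p_{s}(x)|_b$ really behaves like a two-digit numeral under $S_b$ in the polynomial setting. Over the integers one needs each ``digit'' to satisfy $0\le p_{j}(x)<b$; in the polynomial ring this should be read either as a degree/leading-coefficient inequality (each $p_{j}(x)$ with $j<2n$ has smaller degree than $p_{2n}(x)$, or equal degree with strictly smaller leading coefficient, so the base-$b$ expansion is unique) or, most cleanly, as the statement that the identity~\eqref{e:pell-type-II} exhibits $p_{r}^2(x)+p_{s}^2(x)$ in the form $q_1(x)\,b + q_0(x)$ with $\deg q_0 < \deg b$ and $0\le q_1(x) < b$, which is exactly what~\eqref{e:pell-type-II} gives since both $N-(2j+1)$ and $2j+1$ are at most $N=2n-1<2n$. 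One should remark that for all sufficiently large integer values of $x$ this is a bona fide base-$b$ digit sum statement, and that the polynomial identity is what certifies it uniformly in $x$. Everything else is a routine transcription of Section~\ref{s:fibonacci-i} with $F$ replaced by $p(x)$ and Lucas/Catalan replaced by their Pell counterparts from Theorem~\ref{thm:pell-identities}.
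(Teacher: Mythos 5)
Your proposal is correct and follows exactly the route the paper intends: the paper gives no separate proof of Theorem~\ref{thm:pellplus}, remarking only that by Theorem~\ref{thm:pell-identities} the arguments of Section~\ref{s:fibonacci-i} carry over, which is precisely your reduction via \eqref{e:pell-type-II} and the $\psi_{+}$/$\Psi_{+}$ semiconjugacy of Theorem~\ref{thm:psiplus}. Your extra care about when $p_{r}(x).p_{s}(x)|_b$ is a legitimate two-digit representation is a sensible addition that the paper leaves implicit, not a divergence in approach.
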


\begin{theorem}[Pell identity of type II]\label{thm:pell-type-II}
 Let $N=2n+1$, $n\geq1$. Then
 \begin{equation}\label{e:pell-type-I}
    p_{N-i}^2(x) + p_i^2(x) = p_{N-(2j-1)}(x) p_{N-1}(x) + p_{2j-1}(x), \\
 \end{equation}
where $j=\min(i,N-i)$ and $1\leq i \leq n+1$. Observe that $N-(2j-1)$ is always even, and that the indices $N-(2j-1)$ and $2j-1$ sum to $N$.
\end{theorem}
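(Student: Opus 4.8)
The plan is to transcribe the proof of Theorem~\ref{thm:fibminus} essentially verbatim into the Pell setting, since the Pell-Catalan identity~\eqref{eqn:pell-catalan} and the Pell-Lucas identity~\eqref{eqn:pell-lucas} supplied by Theorem~\ref{thm:pell-identities} carry exactly the same index and sign structure as their Fibonacci counterparts~\eqref{eqn:catalan} and~\eqref{eqn:lucas}. So no new idea is needed; only care with the bookkeeping.

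First I would reduce to the case where $j=\min(i,N-i)$ equals $i$ itself: the left-hand side $p_{N-i}^2(x)+p_i^2(x)$ is symmetric under $i\mapsto N-i$ while the unordered pair $\{i,N-i\}$ is unchanged, so it suffices to prove the identity with $j$ in place of $i$. I would then record the two elementary facts that make the rest go through: the hypothesis $1\leq i\leq n+1$ forces $1\leq j\leq n$, so that every Pell polynomial appearing below has a legitimate (nonnegative) index, in particular $p_{j-1}(x)$ and $p_{N-(2j-1)}(x)$; and $N-(2j-1)$ is even, because $N=2n+1$ and $2j-1$ are both odd.

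Next I would write $N-j=(N-1)-(j-1)$ and apply the Pell-Catalan identity~\eqref{eqn:pell-catalan} with parameters $n\mapsto N-j$ and $r\mapsto j-1$; since $(N-j)+(j-1)=N-1$ and $(N-j)-(j-1)=N-(2j-1)$, this gives
\begin{align*}
 p_{N-j}^2(x) &= p_{N-1}(x)\,p_{N-(2j-1)}(x) + (-1)^{N-(2j-1)}p_{j-1}^2(x) \\
 &= p_{N-1}(x)\,p_{N-(2j-1)}(x) + p_{j-1}^2(x),
\end{align*}
the sign being $+1$ by the parity observation. Adding $p_j^2(x)$ to both sides and using the Pell-Lucas identity~\eqref{eqn:pell-lucas} in the form $p_{2j-1}(x)=p_j^2(x)+p_{j-1}^2(x)$ (that is, \eqref{eqn:pell-lucas} with $n\mapsto j-1$) collapses $p_{j-1}^2(x)+p_j^2(x)$ to $p_{2j-1}(x)$, which is exactly~\eqref{e:pell-type-I}. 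The boundary case $i=n+1$ (so $j=n$ and $N-(2j-1)=2$) is just the Pell-Lucas identity itself, $p_n^2(x)+p_{n+1}^2(x)=p_{2n+1}(x)=p_N(x)$.

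There is essentially no obstacle here. The only point requiring a moment's attention is the parity bookkeeping that forces the Catalan sign $(-1)^{N-(2j-1)}$ to equal $1$ — this is precisely where the hypothesis $N=2n+1$ is used — together with the check that $j=\min(i,N-i)$ remains in $[1,n]$, so that no Pell polynomial of negative index is ever invoked. Everything else is a mechanical copy of the Fibonacci argument with $F$ replaced by $p$ and the identities relabelled.
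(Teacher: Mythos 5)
Your proof is correct and is essentially the paper's own argument: the paper omits an explicit proof here, stating only that Theorem~\ref{thm:pell-identities} lets the argument of Theorem~\ref{thm:fibminus} carry over, and your transcription via the Pell-Catalan identity with $n\mapsto N-j$, $r\mapsto j-1$ followed by the Pell-Lucas identity is exactly that argument, with the parity and index-range checks done properly.
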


\begin{theorem}[Pell cycles of type II]\label{thm:pellminus}
 The iterates of $S_b$, $b=p_{2n}(x)$, yield via \eqref{e:pell-type-I} a cycle with initial element $p_2(x).p_{2n-1}(x)|_b$ and terminal element $p_{n}(x).p_{n+1}(x)|_b$ if $n$ is even, or terminal element $p_{n+1}(x).p_{n}(x)|_b$ if $n$ is odd.
\end{theorem}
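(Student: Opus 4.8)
The plan is to mirror exactly the argument used for the Fibonacci case in Section \ref{s:fibonacci-ii}, transporting the dynamics of $S_b$ to the combinatorial map on index pairs and invoking the Pell identity of type II. First I would set $N=2n+1$ and work with the set ${\mathcal{P}}_{-}(N)$ of pairs $[r,s]$ with $r+s=N$ and $s$ odd, together with the map $\psi_{-}([r,s])=[N-(2t-1),2t-1]$, $t=\min(r,s)$, exactly as in the Fibonacci development. The key translation step is to observe that, since $b=p_{2n}(x)=p_{N-1}(x)$, Theorem \ref{thm:pell-type-II} (equation \eqref{e:pell-type-I}) gives precisely
$$
 S_b\bigl(p_r(x).p_s(x)|_b\bigr) = p_{N-(2j-1)}(x).p_{2j-1}(x)|_b, \qquad j=\min(r,s),
$$
which is the statement that $S_b \circ \Psi_{-} = \Psi_{-}\circ\psi_{-}$, where $\Psi_{-}([r,s])=p_r(x).p_s(x)|_b$. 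One must check that the right-hand side is a legitimate two-digit representation, i.e. that $0\le p_{N-(2j-1)}(x)<b$ and $0\le p_{2j-1}(x)<b$ as polynomials — this follows because $N-(2j-1)\le N-1$ and $2j-1\le N-2$ for $j$ in range, and the Pell polynomials are increasing in index (each has positive leading coefficient of strictly increasing degree), so every digit has degree strictly less than $\deg p_{N-1}(x)$.

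Next I would quote Theorem \ref{thm:psiminus}: the $\psi_{-}$-orbit of $[2,N-2]$ is a cycle whose terminal element is $[n,n+1]$ for $n$ even and $[n+1,n]$ for $n$ odd (this theorem is stated for the integer Fibonacci setting but its proof is purely combinatorial in $N$ and applies verbatim, since $\psi_{-}$ depends only on $N$). Applying $\Psi_{-}$ and using the semiconjugacy above, the iterates of $S_b$ on $\Psi_{-}([2,N-2]) = p_2(x).p_{N-2}(x)|_b = p_2(x).p_{2n-1}(x)|_b$ form a cycle, with terminal element $\Psi_{-}([n,n+1]) = p_n(x).p_{n+1}(x)|_b$ if $n$ is even, or $\Psi_{-}([n+1,n]) = p_{n+1}(x).p_n(x)|_b$ if $n$ is odd. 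That is exactly the claim.

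The one genuinely new ingredient — and the part I expect to require the most care — is verifying Theorem \ref{thm:pell-type-II} itself, i.e. identity \eqref{e:pell-type-I}, since the Fibonacci proof of Theorem \ref{thm:fibminus} used Catalan's and Lucas's identities in the specific forms \eqref{eqn:catalan}, \eqref{eqn:lucas}. Here one repeats that computation with the Pell analogues \eqref{eqn:pell-catalan} and \eqref{eqn:pell-lucas}: writing $j=\min(i,N-i)$ and $N-j=(N-1)-(j-1)$, apply the Pell-Catalan identity with center $(N-1)-(j-1)$ and radius $j-1$ to get $p_{N-j}^2(x) = p_{(N-1)-2(j-1)}(x)\,p_{N-1}(x) + (-1)^{(N-1)-2(j-1)}p_{j-1}^2(x)$, note the sign is $+1$ since the exponent is even, then combine $p_{j-1}^2(x)+p_j^2(x) = p_{2j-1}(x)$ by Pell-Lucas. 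The only subtlety is the sign bookkeeping in the Pell-Catalan identity (the $(-1)^{n-r}$ factor and the shift $p_n^2 \to (-1)^{n+1}$-type corrections), which differs by an index shift from the classical Fibonacci version; I would double-check that the parities work out so that the correction term is exactly $+p_{j-1}^2(x)$ and not $-p_{j-1}^2(x)$. Everything else is formal and identical to Section \ref{s:fibonacci-ii}.
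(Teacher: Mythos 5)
Your proposal is correct and is essentially the paper's own argument: the paper gives no separate proof of Theorem \ref{thm:pellminus}, merely noting that by Theorem \ref{thm:pell-identities} the arguments of Section \ref{s:fibonacci-ii} (the identity of Theorem \ref{thm:fibminus} via Catalan and Lucas, transported through the purely combinatorial map $\psi_{-}$ of Theorem \ref{thm:psiminus}) carry over verbatim to Pell polynomials, which is exactly what you spell out. Your sign check is right ($(-1)^{N-2j+1}=+1$ since $N=2n+1$ is odd), and your explicit semiconjugacy $S_b\circ\Psi_{-}=\Psi_{-}\circ\psi_{-}$ and digit-size remark only make explicit what the paper leaves implicit.
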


\begin{theorem}[Pell fixed points]\label{thm:pell-fixed-points}
 Let $n$ be a positive integer.
\begin{enumerate}
\renewcommand{\labelenumi}{\theenumi}
\renewcommand{\theenumi}{(\alph{enumi})}

  \item\label{thm:pell-fixed-points-a} The polynomial $p_{2n}(x).p_{4n-1}(x)|_b$ is a fixed point of $S_b$, where $b=p_{6n-2}(x)$.
  \item\label{thm:pell-fixed-points-b} The polynomial $p_{2n}(x).p_{4n+1}(x)|_b$ is a fixed point of $S_b$, where $b=p_{6n+2}(x)$.
  \item\label{thm:pell-fixed-points-c} The polynomial $p_{2n}(x)p_{2n-1}(x).p_{2n+1}(x)p_{2n-1}(x)|_b$ is a fixed point of $S_b$, where $b=p_{4n}(x)$.

\end{enumerate}
\end{theorem}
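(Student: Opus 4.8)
The plan is to verify each of the three fixed-point assertions directly from the defining equation of a two-digit fixed point, namely $u^2+v^2 = u\cdot b + v$ for $u.v|_b$, by substituting the claimed values and reducing to a known Pell-polynomial identity from Theorem~\ref{thm:pell-identities}. Since all three claims have exactly the shape ``$p_a(x).p_c(x)$ is fixed in base $p_d(x)$'' (or a scaled variant in part~\ref{thm:pell-fixed-points-c}), the uniform strategy is: form the quantity $\Delta = p_a^2(x) + p_c^2(x) - p_a(x)p_d(x) - p_c(x)$ and show $\Delta = 0$ as a polynomial identity.

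For part~\ref{thm:pell-fixed-points-a}, set $a=2n$, $c=4n-1$, $d=6n-2$, so $\Delta = p_{2n}^2 + p_{4n-1}^2 - p_{2n}p_{6n-2} - p_{4n-1}$. First I would apply the Pell version of Theorem~\ref{thm:fibplus}, that is \eqref{e:pell-type-II} with $N=6n-1$ and $i=2n$ (so $j=\min(2n,4n-1)=2n$, giving $N-(2j+1)=6n-1-(4n+1)=2n-2$ — wait, that is not $6n-2$), so more carefully I would instead recognize $p_{2n}p_{6n-2}$ via Pell-Vajda \eqref{eqn:pell-vajda} or Pell-d'Ocagne \eqref{eqn:pell-docagne} to split it against $p_{4n-1}$. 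The cleanest route mirrors the proof of Theorem~\ref{thm:companion-fib}: use $p_{2n+1}(x)+\cdots$ type symmetry, writing $p_{2n}p_{6n-2}$ and $p_{4n-1} = p_{2n}^2 + p_{2n-1}^2$ (by Pell-Lucas, since $4n-1 = 2(2n-1)+1$) to telescope. Then the identity $p_{4n-1} = p_{2n}^2+p_{2n-1}^2$ replaces $p_{4n-1}$, and one reduces $\Delta$ to an expression in $p_{2n-1}, p_{2n}, p_{6n-2}$ which should collapse by a further Vajda application; an analogous computation handles part~\ref{thm:pell-fixed-points-b} with $N=6n+1$ and the shift $4n+1 = 2(2n)+1$, so $p_{4n+1}=p_{2n+1}^2+p_{2n}^2$ by Pell-Lucas.

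For part~\ref{thm:pell-fixed-points-c}, the common factor $p_{2n-1}(x)$ should be pulled out exactly as in the proof of Theorem~\ref{thm:companion-fib}: with $u = p_{2n-1}$, $y.x = p_{2n}u.p_{2n+1}u$ — hmm, the statement has $p_{2n}p_{2n-1}.p_{2n+1}p_{2n-1}$ — I would compute $\Delta = (p_{2n}p_{2n-1})^2 + (p_{2n+1}p_{2n-1})^2 - (p_{2n}p_{2n-1})p_{4n} - p_{2n+1}p_{2n-1}$, factor out $p_{2n-1}$ (legitimate since the remaining equation is equivalent to $\Delta=0$), use Pell-Lucas $p_{2n}^2+p_{2n+1}^2 = p_{4n+1}$, then split $p_{2n}p_{4n}$ and $p_{2n+1}p_{4n-1}$ — no, rather split $p_{2n}p_{4n}$ via Vajda and recognize the difference $p_{2n-1}p_{4n+1} - p_{2n}p_{4n}$ collapses by Pell-d'Ocagne \eqref{eqn:pell-docagne} to a multiple of $p_{2n-1}$; this is essentially the companion-base computation \eqref{egn:fib4n}--\eqref{eqn:fib3nb} rerun with Pell polynomials and should be cited as such.

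The main obstacle is bookkeeping: getting the index shifts in the Vajda/d'Ocagne applications to land exactly on $p_{6n-2}$, $p_{6n+2}$, $p_{4n}$ rather than on neighbouring indices, and handling the sign factors $(-1)^n$ (which for Pell-Cassini/Catalan are $(-1)^{n+1}$, differing from the Fibonacci case). I expect each of the three identities to reduce, after one Pell-Lucas substitution and one Vajda split, to a Pell-d'Ocagne or Pell-Cassini identity, so the write-up can be organized as three short aligned computations, each ending in $0$; alternatively, since parts~\ref{thm:pell-fixed-points-a} and~\ref{thm:pell-fixed-points-b} are the Pell analogues of Theorem~\ref{thm:fixed-points-fib} and part~\ref{thm:pell-fixed-points-c} is the Pell analogue of Theorem~\ref{thm:companion-fib}\ref{thm:companion-fib-b}, one may simply note that every step in those proofs used only identities having Pell counterparts in Theorem~\ref{thm:pell-identities}, so the proofs transfer verbatim with $F_m \rightsquigarrow p_m(x)$ and the $2$ in d'Ocagne's identity replaced by $2x$.
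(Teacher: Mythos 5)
Your overall plan---verify $u^2+v^2=ub+v$ directly and reduce to Theorem \ref{thm:pell-identities}---is sound, and for part \ref{thm:pell-fixed-points-c} it is essentially the paper's own proof: factor out $p_{2n-1}(x)$, apply Pell--Lucas \eqref{eqn:pell-lucas} to get $p_{4n+1}(x)p_{2n-1}(x)-p_{2n}(x)p_{4n}(x)-p_{2n+1}(x)$, and collapse via the two Pell--Catalan splits about index $3n$, namely $p_{2n-1}(x)p_{4n+1}(x)=p_{3n}^2(x)+p_{n+1}^2(x)$ and $p_{2n}(x)p_{4n}(x)=p_{3n}^2(x)-p_{n}^2(x)$, finishing with Pell--Lucas again. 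For parts \ref{thm:pell-fixed-points-a} and \ref{thm:pell-fixed-points-b} the paper's route is shorter than your sketch and fixes the index trouble you ran into: you reached for \eqref{e:pell-type-II} with $N=6n-1$, but the base there is $p_{N+1}(x)$; the correct specialization is \eqref{e:pell-type-I} with $N=6n-1$, $i=j=2n$, which reads $p_{4n-1}^2(x)+p_{2n}^2(x)=p_{2n}(x)p_{6n-2}(x)+p_{4n-1}(x)$ and is literally the fixed-point equation for \ref{thm:pell-fixed-points-a}; likewise \eqref{e:pell-type-II} with $N=6n+1$, $i=j=2n$ gives \ref{thm:pell-fixed-points-b}. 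Your substitute (replace the linear term by Pell--Lucas, then one Catalan/Vajda split of $p_{4n\mp1}^2(x)$ against $p_{2n}(x)p_{6n\mp2}(x)$, with $r=2n-1$ resp.\ $r=2n+1$ in \eqref{eqn:pell-catalan}) does work, but you left it at ``should collapse'' instead of exhibiting the split.

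The concrete missteps are your two appeals to Pell--d'Ocagne. In \ref{thm:pell-fixed-points-c}, the quantity $p_{2n-1}(x)p_{4n+1}(x)-p_{2n}(x)p_{4n}(x)$ is not ``a multiple of $p_{2n-1}(x)$'' and is not given by \eqref{eqn:pell-docagne}; it equals $p_{2n+1}(x)$ (by the two Catalan splits above, or by Vajda), which is exactly what cancels the remaining term. More seriously, your closing claim that the Fibonacci proofs transfer verbatim ``with the $2$ in d'Ocagne's identity replaced by $2x$'' is false: the written proof of Theorem \ref{thm:companion-fib} is of part \ref{thm:companion-fib-a} (base $F_{4n-1}$) and ends with $F_{n+1}^2-F_{n-1}^2-F_{2n}=0$; its Pell analogue leaves $(2x-1)\,p_{2n}(x)$ rather than $0$ because of \eqref{eqn:pell-docagne}. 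This is precisely why the paper remarks that Theorem \ref{thm:companion-fib} \ref{thm:companion-fib-a} does not generalize and why no base $p_{4n-1}(x)$ appears in Theorem \ref{thm:pell-fixed-points}. Only the base-$p_{4n}(x)$ branch (the analogue of Theorem \ref{thm:companion-fib} \ref{thm:companion-fib-b}) transfers, because its final step is Pell--Lucas rather than d'Ocagne; any write-up must follow that branch, as the paper's proof of \ref{thm:pell-fixed-points-c} does, so your ``transfer verbatim'' shortcut should be dropped in favor of the explicit computation you outlined earlier in the paragraph.
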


Theorem \ref{thm:pell-fixed-points} \ref{thm:pell-fixed-points-c} generalizes Theorem \ref{thm:companion-fib} \ref{thm:companion-fib-b}. Unfortunately, Theorem \ref{thm:companion-fib} \ref{thm:companion-fib-a} does not generalize to Pell polynomials because of the Pell-d'Ocagne identity, \eqref{eqn:pell-docagne}.

\begin{proof}[Proof of \ref{thm:pell-fixed-points} \ref{thm:pell-fixed-points-c}] Thus,
\begin{align*}
 \left(p_{2n}(x)p_{2n-1}(x)\right)^2 &+ \left(p_{2n+1}(x)p_{2n-1}(x)\right)^2 - p_{2n}(x)p_{2n-1}(x)p_{4n}(x) - p_{2n+1}(x)p_{2n-1}(x) \\
 &\quad\text{\footnotesize(remove the factor $p_{2n-1}(x)$)} \\
 &\approx p_{2n}(x)^2 p_{2n-1}(x) + p_{2n+1}(x)^2 p_{2n-1}(x) - p_{2n}(x)p_{4n}(x) - p_{2n+1}(x)\\
 &= \left( p_{2n}(x)^2 + p_{2n+1}(x)^2 \right) p_{2n-1}(x) - p_{2n}(x)p_{4n}(x) - p_{2n+1}(x)\\
 &= p_{4n+1}(x) p_{2n-1}(x) - p_{2n}(x)p_{4n}(x) - p_{2n+1}(x)\\
 &= p_{3n}(x)^2+p_{n+1}(x)^2 - \left( p_{3n}(x)^2 - p_{n}(x)^2 \right) - p_{2n+1}(x)\\
 &= p_{n+1}(x)^2 + p_{n}(x)^2 - p_{2n+1}(x)\\
 &= 0.\quad\text{(by \eqref{eqn:pell-lucas})}\qedhere
\end{align*}
\end{proof}

Theorem \ref{thm:pell-fixed-points} \ref{thm:pell-fixed-points-c} has an extension to arithmetic sequences of Pell polynomials.

\begin{corollary}[Arithmetic sequence of fixed points in Pell polynomials]\label{cor:arithpellfixed}
 Assume $n\geq1$ and $k\geq0$. The polynomial $p_{2n}(x)u.p_{2n+1}(x)u|_{b}$ is a fixed point in base $b=p_{4n}(x)+p_{2n+1}(x)p_{4n+1}(x)k$, where $u=p_{2n-1}(x)+p_{2n}(x)p_{2n+1}(x)k$.
\end{corollary}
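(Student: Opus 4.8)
The plan is to mimic the proof of Theorem~\ref{thm:pell-fixed-points}\ref{thm:pell-fixed-points-c}, but carry the extra parameter $k$ through the computation. Write $u=p_{2n-1}(x)+p_{2n}(x)p_{2n+1}(x)k$ and $b=p_{4n}(x)+p_{2n+1}(x)p_{4n+1}(x)k$. To show $p_{2n}(x)u\,.\,p_{2n+1}(x)u|_b$ is a fixed point of $S_b$, it suffices (by the analogue of Beardon's Theorem~\ref{thm:fixedpoints}, valid formally for polynomials) to verify the identity
\[
 \bigl(p_{2n}(x)u\bigr)^2 + \bigl(p_{2n+1}(x)u\bigr)^2 - p_{2n}(x)u\cdot b - p_{2n+1}(x)u = 0.
\]
First I would factor out $u$ from all four terms, reducing the claim to
\[
 \bigl(p_{2n}(x)^2+p_{2n+1}(x)^2\bigr)u \;=\; p_{2n}(x)b + p_{2n+1}(x),
\]
and then apply the Pell-Lucas identity \eqref{eqn:pell-lucas} to rewrite the left-hand coefficient as $p_{4n+1}(x)$.

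The core is therefore the single polynomial identity $p_{4n+1}(x)u = p_{2n}(x)b + p_{2n+1}(x)$. Substituting the expressions for $u$ and $b$, the $k^0$ term is exactly the identity $p_{4n+1}(x)p_{2n-1}(x) = p_{2n}(x)p_{4n}(x) + p_{2n+1}(x)$ already established inside the proof of Theorem~\ref{thm:pell-fixed-points}\ref{thm:pell-fixed-points-c} (there it appeared as the chain $p_{4n+1}p_{2n-1} - p_{2n}p_{4n} - p_{2n+1} = p_{n+1}^2+p_n^2-p_{2n+1}=0$). The $k^1$ term requires
\[
 p_{4n+1}(x)\,p_{2n}(x)p_{2n+1}(x) \;=\; p_{2n}(x)\,p_{2n+1}(x)p_{4n+1}(x),
\]
which is a trivial rearrangement, so the coefficient of $k$ vanishes identically and there is no $k^2$ term. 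Hence the whole expression is $0$, and by the (polynomial) Beardon criterion $p_{2n}(x)u\,.\,p_{2n+1}(x)u|_b$ is fixed.

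The one genuine subtlety — and the step I expect to need the most care — is justifying that the two-digit representation $p_{2n}(x)u\,.\,p_{2n+1}(x)u|_b$ is legitimate, i.e.\ that the ``digits'' $p_{2n}(x)u$ and $p_{2n+1}(x)u$ really lie in the admissible range relative to $b$ so that the fixed-point equation $x^2+y^2=bx+y$ is the correct transcription of $S_b$. For honest integer bases one needs $0\le p_{2n}(x)u<b$ and $1\le p_{2n+1}(x)u<b$; in the polynomial setting the natural substitute is a degree comparison, using that $p_m(x)$ has degree $m-1$ with positive leading coefficient, so $\deg\bigl(p_{2n}(x)u\bigr)$ and $\deg\bigl(p_{2n+1}(x)u\bigr)$ are both strictly less than $\deg b$ for every fixed $k\ge 0$ and $n\ge 1$; for integer bases this is inherited by specialising $x$ to any sufficiently large value. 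I would state this range check explicitly (as is implicitly done for the $k=0$ case in Theorem~\ref{thm:pell-fixed-points}\ref{thm:pell-fixed-points-c}) and then present the identity verification above, which is short. Everything else is routine algebra with the identities of Theorem~\ref{thm:pell-identities}.
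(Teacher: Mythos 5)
Your proposal is correct and follows essentially the same route as the paper, whose proof of Corollary \ref{cor:arithpellfixed} is simply to repeat the argument of Theorem \ref{thm:pell-fixed-points} \ref{thm:pell-fixed-points-c}: factor the common factor $u$ out of $x^2+y^2-xb-y$, reduce via the Pell--Lucas identity to $p_{4n+1}(x)p_{2n-1}(x)=p_{2n}(x)p_{4n}(x)+p_{2n+1}(x)$ for the constant term, and observe that the coefficient of $k$ cancels identically. Your added remark on the digit-range (degree) check is a reasonable precaution the paper leaves implicit, but it does not change the approach.
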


\begin{proof} The proof is similar to that of Theorem \ref{thm:pell-fixed-points} \ref{thm:pell-fixed-points-c}. \end{proof}

\begin{theorem}\label{thm:pellfibcycle-i}
 Let $N=2n-1$, $n\geq1$, and let $k\geq0$. Then
\begin{equation}\label{e:pellfibcycle-i}
  S_b(x_1.x_0) = y_1.y_0,
 \end{equation}
where
\begin{align*}
 b &= p_{N+1}(x) + p_{N+2}(x)k, \\
 x_0 &= p_{i}(x) + p_{i+1}(x)k, \\
 x_1 &= p_{N-i}(x) + p_{N-i+1}(x)k, \\
 y_0 &= p_{2i+1}(x) + p_{2i+2}(x)k, \\
 y_1 &= p_{N-(2i+1)}(x) + p_{N-2i}(x)k.
\end{align*}
and where $0\leq i\leq n-1$.
\end{theorem}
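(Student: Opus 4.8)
The plan is to mirror the proof of Theorem \ref{thm:arithfibcycle-i}, since Theorem \ref{thm:pellfibcycle-i} is its verbatim Pell-polynomial analogue and the Pell identities in Theorem \ref{thm:pell-identities} match the Fibonacci identities in Section \ref{s:fibonacci} term for term (with the single caveat that d'Ocagne acquires the factor $2x$). Concretely, I would set $N=2n-1$ with $n\geq2$, write out $x_0^2+x_1^2-y_1 b-y_0$ with the substitutions from the statement, expand as a polynomial in $k$, and show each of the three coefficients (the $k^0$, $k^1$, and $k^2$ terms) vanishes. The $k^0$ coefficient is $p_{i}^2+p_{N-i}^2-p_{N-2i-1}p_{N+1}-p_{2i+1}$, which is exactly Theorem \ref{thm:pell-type-I} (identity \eqref{e:pell-type-II}) and is therefore already established. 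The $k^2$ coefficient is $p_{i+1}^2+p_{N-i+1}^2-p_{N-2i}p_{N+2}$, which vanishes by the Pell-Catalan identity \eqref{eqn:pell-catalan} with central index $N-2i$ and offset $r=i+1$, since $(N-2i)-(i+1)=i$ and $(N-2i)+(i+1)=N-i+1$, and $(-1)^{(N-2i)-(i+1)}=(-1)^{N-3i-1}=(-1)^{2(n-1)-i}\cdot(-1)^{?}$ — more simply, $N-2i$ is odd so $(N-2i)-(i+1)=N-3i-1$ has the same parity as $-3i$, hence as $i$, but one checks directly that the Catalan sign is $+1$ here exactly as in the Fibonacci case, collapsing the term to $0$.

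The only genuine work is the $k^1$ coefficient,
$$
2p_{i}p_{i+1}+2p_{N-i}p_{N-i+1}-p_{N-2i-1}p_{N+2}-p_{N-2i}p_{N+1}-p_{2i+2}.
$$
I would handle this in two moves, exactly paralleling Lemmas \ref{lem:dOcagne} and \ref{lem:odd-vadja}. First, the Pell-Vajda analogue: applying \eqref{eqn:pell-vajda} with $n\mapsto N-2i$, $r=i$, $s=i+1$ gives $p_{N-i}p_{N-i+1}=p_{N-2i}p_{N+1}-(-1)^{N-2i}p_ip_{i+1}=p_{N-2i}p_{N+1}+p_ip_{i+1}$ (since $N-2i$ is odd), so $p_ip_{i+1}+p_{N-i}p_{N-i+1}=p_{N-2i}p_{N+1}$. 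Substituting reduces the coefficient to $p_{N-2i}p_{N+1}-p_{N-2i-1}p_{N+2}-p_{2i+2}$. Second, the Pell-d'Ocagne analogue: I must show $p_{N-2i}p_{N+1}=p_{N-2i-1}p_{N+2}+p_{2i+2}$. Here is the one place the extra factor $2x$ in \eqref{eqn:pell-docagne} forces care — the naive d'Ocagne substitution would produce $2x$'s. I would instead derive this identity directly from Pell-Vajda: apply \eqref{eqn:pell-vajda} with $n\mapsto N-2i-1$, $r=1$, $s=2i+2$ (so $n+r=N-2i$, $n+s=N+1$, $n+r+s=N+2$) to get $p_{N-2i}p_{N+1}=p_{N-2i-1}p_{N+2}+(-1)^{N-2i-1}p_1p_{2i+2}=p_{N-2i-1}p_{N+2}+p_{2i+2}$ since $N-2i-1$ is even and $p_1=1$. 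That finishes the $k^1$ term.

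The main obstacle — modest as it is — is precisely this d'Ocagne step: unlike the Fibonacci case in Lemma \ref{lem:dOcagne}, one cannot cite \eqref{eqn:pell-docagne} directly because of its $2x$ coefficient, so the analogue of equation \eqref{eqn:odd-docagne} must be re-derived from Pell-Vajda (or Pell-Catalan) instead. Everything else is a routine transcription of Section \ref{s:arithFibonacci-i} with $F$ replaced by $p$, which is why the paper is content to state the theorem and defer; in a complete write-up I would record the two Pell lemmas above and then assemble them exactly as in the Proof of Theorem \ref{thm:arithfibcycle-i}. The cases $i=0$ and $i=n-1$ need no separate treatment since the identities hold for the full stated range $0\leq i\leq n-1$, and the $n=1$ degenerate case is excluded from the cycle interpretation just as before.
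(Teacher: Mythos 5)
Your proposal follows exactly the route the paper intends: the paper's own proof of Theorem \ref{thm:pellfibcycle-i} is simply ``similar to that of Theorem \ref{thm:arithfibcycle-i}'', i.e.\ expand $x_0^2+x_1^2-y_1b-y_0$ in powers of $k$ and kill the $k^0$, $k^1$, $k^2$ coefficients using the Pell analogues of \eqref{e:fibplus}, Lemma \ref{lem:odd-vadja} and Lemma \ref{lem:dOcagne}; moreover your observation that the listed Pell--d'Ocagne identity \eqref{eqn:pell-docagne} cannot be cited verbatim because of its factor $2x$, and your rederivation of $p_{N-2i}(x)p_{N+1}(x)=p_{N-2i-1}(x)p_{N+2}(x)+p_{2i+2}(x)$ from Pell--Vajda with $r=1$, $s=2i+2$, is correct and is precisely the adjustment needed (the Fibonacci Lemma \ref{lem:dOcagne} likewise rests on a two-variable d'Ocagne form, which Vajda recovers). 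One small correction in your $k^2$ step: Pell--Catalan should be applied with central index $N-i+1$ (not $N-2i$) and offset $r=i+1$, giving $p_{N-i+1}^2(x)=p_{N-2i}(x)p_{N+2}(x)+(-1)^{N-2i}p_{i+1}^2(x)$, where the sign $(-1)^{N-2i}=-1$ (as $N$ is odd) is exactly what makes $p_{i+1}^2(x)+p_{N-i+1}^2(x)-p_{N-2i}(x)p_{N+2}(x)$ vanish; your stated parameters and sign remark there are garbled, though the conclusion is right.
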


\begin{proof}
 The proof is similar to that of Theorem \ref{thm:arithfibcycle-i}.
\end{proof}

A direct application of Theorem \ref{thm:pellfibcycle-i} yields a generalization of Theorem \ref{thm:fibpluscycle} to Pell polynomials.

\begin{theorem}[Arithmetic Pell cycles of type I]\label{thm:arithpellplus}
 Let $N=2n-1$, $n\geq 1$, and $k\geq0$. Then the iterates of $S_b$, $b=p_{N+1}(x)+p_{N+2}(x)k$, yield via \eqref{e:pellfibcycle-i} a cycle with initial element
 $$
 p_{0}(x)+p_{1}(x)k.p_{N}(x)+p_{N+1}(x)k|_b
 $$
 and terminal element
 $$
 p_{n}(x)+p_{n+1}(x)k.p_{n-1}(x)+p_{n}(x)k|_b
 $$
 if $n$ is even, or terminal element
 $$
 p_{n-1}(x)+p_{n}(x)k.p_{n}(x)+p_{n+1}(x)k|_b
 $$
 if $n$ is odd.
\end{theorem}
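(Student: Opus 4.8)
The plan is to transfer, almost verbatim, the reduction-to-combinatorics argument of Section~\ref{s:arithFibonacci-i} to the Pell setting, using Theorem~\ref{thm:pellfibcycle-i} in place of Theorem~\ref{thm:arithfibcycle-i}. The single iteration step is already packaged in Theorem~\ref{thm:pellfibcycle-i}: if one encodes ``the digit indexed by $j$'' as the polynomial $p_{j}(x)+p_{j+1}(x)k$, then applying $S_b$ with $b=p_{N+1}(x)+p_{N+2}(x)k$ to a two-digit polynomial whose digits are indexed by $i$ and $N-i$ (with $0\le i\le n-1$, so $j=\min(i,N-i)=i$) returns the two-digit polynomial whose digits are indexed by $2i+1$ and $N-(2i+1)$. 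Because $S_b(x_1.x_0)=x_1^2+x_0^2$ is symmetric in the two digits, this is exactly the statement that $S_b$ intertwines the encoding map with the combinatorial map $\psi_{+}$ on pairs $[[r,r+1],[s,s+1]]$, $r+s=N$, already introduced in Section~\ref{s:arithFibonacci-i}.

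Concretely, I would define $\Psi_{+}([[r,r+1],[s,s+1]])=p_{r}(x)+p_{r+1}(x)k.p_{s}(x)+p_{s+1}(x)k|_{b}$ for $r+s=N$, with $b=p_{N+1}(x)+p_{N+2}(x)k$, and observe that Theorem~\ref{thm:pellfibcycle-i} reads precisely $S_b(\Psi_{+}(P))=\Psi_{+}(\psi_{+}(P))$ for every such pair $P$. Hence the $S_b$-orbit of $\Psi_{+}(P)$ is the $\Psi_{+}$-image of the $\psi_{+}$-orbit of $P$. Taking $P=[[0,1],[N,N+1]]$ gives $\Psi_{+}(P)=p_{0}(x)+p_{1}(x)k.p_{N}(x)+p_{N+1}(x)k|_{b}$, the asserted initial element; and by Theorem~\ref{thm:arith-psiplus}, for $n\ge2$ the $\psi_{+}$-orbit of $[[0,1],[N,N+1]]$ is a cycle whose terminal element is $[[n,n+1],[n-1,n]]$ when $n$ is even and $[[n-1,n],[n,n+1]]$ when $n$ is odd. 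Applying $\Psi_{+}$ then produces exactly the initial and terminal elements displayed in the theorem.

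The one point requiring a brief check is that $\Psi_{+}$ does not collapse the cycle, i.e.\ that it is injective on the pairs occurring in it, so that the image is an $S_b$-cycle of the same length. For $r\neq r'$ the polynomials $p_{r}(x)+p_{r+1}(x)k$ and $p_{r'}(x)+p_{r'+1}(x)k$ are distinct (compare degrees, viewing $k$ as a parameter, or specialize $k=0$), and no pair of the form $[[a,a+1],[a,a+1]]$ can occur since that forces $2a=N$ with $N$ odd; hence distinct pairs on the orbit map to two-digit polynomials differing in at least one digit position, so $\Psi_{+}$ is injective there and the transfer is exact. The case $n=1$ is degenerate: $[[0,1],[1,2]]$ is a $\psi_{+}$-fixed point, so $\Psi_{+}$ of it is an $S_b$-fixed point (a ``cycle'' of length one, consistent with the stated terminal element coinciding with the initial one), and a genuine cycle of length $\ge2$ requires $n\ge2$; this boundary case is worth isolating in the write-up.

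I do not expect any substantive obstacle beyond this bookkeeping. Every analytic ingredient, namely the Pell--Catalan, Pell--Vajda, Pell--d'Ocagne, and Pell--Lucas identities, has already been absorbed into Theorems~\ref{thm:pell-type-I} and~\ref{thm:pellfibcycle-i}, and the orbit combinatorics is identical to the Fibonacci case settled in Theorem~\ref{thm:arith-psiplus}, so the proof is essentially a transcription: state the intertwining relation coming from Theorem~\ref{thm:pellfibcycle-i}, invoke Theorem~\ref{thm:arith-psiplus}, and read off the initial and terminal elements under $\Psi_{+}$.
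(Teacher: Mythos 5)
Your proposal is correct and follows exactly the route the paper intends: the paper states that Theorem~\ref{thm:arithpellplus} is a direct application of Theorem~\ref{thm:pellfibcycle-i} (leaving the details to the reader), which is precisely your intertwining of $S_b$ with the pair map $\psi_{+}$ of Section~\ref{s:arithFibonacci-i} via the encoding $\Psi_{+}$, followed by an appeal to Theorem~\ref{thm:arith-psiplus}. Your added checks (injectivity of $\Psi_{+}$ and the degenerate case $n=1$) are sensible bookkeeping but do not change the argument.
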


\begin{proof} The proof is left to the reader. \end{proof}

\begin{theorem}\label{thm:pellfibcycle-ii}
Let $N=2n+1$, $n\geq1$, and let $k\geq0$. Then
\begin{equation}\label{e:pellfibcycle-ii}
  S_b(x_1.x_0) = y_1.y_0,
 \end{equation}
where
\begin{align*}
 b &= p_{N-1}(x) + p_{N-2}(x)k, \\
 x_0 &= p_{i}(x) + p_{i-1}(x)k, \\
 x_1 &= p_{N-i}(x) + p_{N-i-1}(x)k, \\
 y_0 &= p_{2i-1}(x) + p_{2i-2}(x)k, \\
 y_1 &= p_{N-(2i-1)}(x) + p_{N-2i}(x)k,
\end{align*}
and where $1\leq i\leq n+1$.
\end{theorem}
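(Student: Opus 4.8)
The plan is to verify the identity $S_b(x_1.x_0) = y_1.y_0$ by expanding $x_0^2 + x_1^2 - y_1 b - y_0$ as a polynomial in $k$ and showing each coefficient vanishes, exactly as in the proof of Theorem \ref{thm:arithfibcycle-i}. Writing $b = p_{N-1}(x) + p_{N-2}(x)k$, $x_0 = p_i(x) + p_{i-1}(x)k$, $x_1 = p_{N-i}(x) + p_{N-i-1}(x)k$, $y_0 = p_{2i-1}(x) + p_{2i-2}(x)k$, and $y_1 = p_{N-(2i-1)}(x) + p_{N-2i}(x)k$, the expression $x_0^2 + x_1^2 - y_1 b - y_0$ is a quadratic in $k$, and I would organize the proof around its three coefficients.

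First, the constant term (coefficient of $k^0$) is $p_i^2(x) + p_{N-i}^2(x) - p_{N-(2i-1)}(x) p_{N-1}(x) - p_{2i-1}(x)$, which is precisely zero by the Pell identity of type II, Theorem \ref{thm:pell-type-II} (equation \eqref{e:pell-type-I}). Next, the coefficient of $k^2$ is $p_{i-1}^2(x) + p_{N-i-1}^2(x) - p_{N-2i}(x) p_{N-2}(x)$; I would dispatch this using the Pell-Catalan identity \eqref{eqn:pell-catalan} in the form $p_{i-1}^2(x) = p_{N-i-1}(x)p_{?}(x) + \cdots$, choosing the center and radius so that $p_{N-2}(x) p_{N-2i}(x)$ appears, mirroring the $k^2$-term computation in Theorem \ref{thm:arithfibcycle-i}; the parity factor $(-1)^{N-2i-1}$ works out with a sign that cancels the two leftover squares. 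Finally, the coefficient of $k^1$ is
$$
2 p_i(x) p_{i-1}(x) + 2 p_{N-i}(x) p_{N-i-1}(x) - p_{N-(2i-1)}(x) p_{N-2}(x) - p_{N-2i}(x) p_{N-1}(x) - p_{2i-2}(x),
$$
and this is exactly where Lemmas \eqref{e:lemma3} and \eqref{e:lemma4} enter (in their Pell-polynomial analogues): \eqref{e:lemma3} rewrites $p_i(x)p_{i-1}(x) + p_{N-i}(x)p_{N-i-1}(x)$ as $p_{N-2i}(x) p_{N-1}(x)$ via Pell-Vajda \eqref{eqn:pell-vajda}, reducing the $k$-term to $p_{N-2i}(x) p_{N-1}(x) - p_{N-(2i-1)}(x) p_{N-2}(x) - p_{2i-2}(x)$, which vanishes by \eqref{e:lemma4} via the Pell-d'Ocagne identity \eqref{eqn:pell-docagne}.

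The main obstacle is purely bookkeeping with indices and signs: one must check that the index ranges $1 \le i \le n+1$ keep all subscripts nonnegative (note $i=1$ gives $p_{i-1}(x)=p_0(x)=0$, and $i=n+1$ recovers the Pell-Lucas boundary case), and that every application of Pell-Catalan, Pell-Vajda, and Pell-d'Ocagne uses parameters whose parities produce the claimed sign — the Pell-d'Ocagne identity carries an extra factor of $2x$ compared to its Fibonacci counterpart, so I would double-check that the analogue of \eqref{e:lemma4} is stated and used with that factor absorbed correctly. Since Theorem \ref{thm:pell-identities} provides all five Pell identities in exactly the shape needed, no new idea beyond the Fibonacci case is required, and the proof can honestly be summarized as ``similar to Theorem \ref{thm:arithfibcycle-i}, using the Pell analogues of Lemmas \eqref{e:lemma3} and \eqref{e:lemma4}.''
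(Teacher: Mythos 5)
Your proposal is correct and matches the paper's approach exactly: the paper proves this theorem by declaring it similar to Theorem \ref{thm:arithfibcycle-ii} (hence to Theorem \ref{thm:arithfibcycle-i}), which is precisely the coefficient-by-coefficient verification in $k$ that you describe, with the constant term handled by Theorem \ref{thm:pell-type-II}, the $k^2$ term by Pell--Catalan \eqref{eqn:pell-catalan}, and the $k$ term by the Pell analogues of Lemmas \eqref{e:lemma3} and \eqref{e:lemma4}. The one point you flagged --- the extra factor $2x$ in Pell--d'Ocagne \eqref{eqn:pell-docagne} --- is harmless, since the Pell analogue of \eqref{e:lemma4}, namely $p_{N-2i}(x)p_{N-1}(x)=p_{N-(2i-1)}(x)p_{N-2}(x)+p_{2i-2}(x)$, follows directly from Pell--Vajda \eqref{eqn:pell-vajda} with $n=N-2i$, $r=1$, $s=2i-2$ (using $p_1(x)=1$ and $N$ odd), so the $2x$ never enters.
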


\begin{proof}
 The proof is similar to that of Theorem \ref{thm:arithfibcycle-ii}.
\end{proof}

A direct application of Theorem \ref{thm:pellfibcycle-ii} yields a generalization of Theorem \ref{thm:fibminuscycle} to Pell polynomials.

\begin{theorem}[Arithmetic Pell cycles of type II]\label{thm:arithpellminus}
 Let $N=2n+1$, $n\geq 1$, and $k\geq0$. Then the iterates of $S_b$, $b=p_{N-1}(x)+p_{N-2}(x)k$, yield via \eqref{e:pellfibcycle-ii} a cycle with initial element
 $$
 p_{2}(x)+p_{1}(x)k.p_{N-2}(x)+p_{N-3}(x)k|_b
 $$
 and terminal element
 $$
 p_{n}(x)+p_{n-1}(x)k.p_{n+1}(x)+p_{n}(x)k|_b
 $$
 if $n$ is even, or terminal element
 $$
 p_{n+1}(x)+p_{n}(x)k.p_{n}(x)+p_{n-1}(x)k|_b
 $$
 if $n$ is odd.
\end{theorem}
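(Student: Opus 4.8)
The plan is to reproduce the argument of Section~\ref{s:arithFibonacci-ii} with Fibonacci numbers replaced throughout by Pell polynomials, so that all the algebra is carried by Theorem~\ref{thm:pellfibcycle-ii} and all the combinatorics by Theorem~\ref{thm:arith-psiminus}. Fix $N=2n+1$, $n\ge1$, $k\ge0$, and $b=p_{N-1}(x)+p_{N-2}(x)k$.

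First I would recast Theorem~\ref{thm:pellfibcycle-ii} as an intertwining relation. As in Section~\ref{s:arithFibonacci-ii}, introduce the ``doubled pairs'' $[[r,r-1],[s,s-1]]$ with $r+s=N$, the map $\psi_{-}([[r,r-1],[s,s-1]])=[[N-(2t-1),N-2t],[2t-1,2t-2]]$ with $t=\min(r,s)$, and the dictionary $\Psi_{-}([[r,r-1],[s,s-1]])=p_{r}(x)+p_{r-1}(x)k.p_{s}(x)+p_{s-1}(x)k|_{b}$. Reading the indices of \eqref{e:pellfibcycle-ii} through $t=\min(\cdot,\cdot)$, Theorem~\ref{thm:pellfibcycle-ii} says exactly that $S_{b}\circ\Psi_{-}=\Psi_{-}\circ\psi_{-}$ on doubled pairs of index sum $N$. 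Consequently the $S_{b}$-orbit of $\Psi_{-}(P)$ is the $\Psi_{-}$-image of the $\psi_{-}$-orbit of $P$, and any $S_{b}$-cycle through $\Psi_{-}(P)$ has initial and terminal elements obtained by applying $\Psi_{-}$ to those of the corresponding $\psi_{-}$-cycle.

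Next I would invoke Theorem~\ref{thm:arith-psiminus}, whose statement and proof are purely combinatorial and hence indifferent to the $F$-versus-$p$ reading: the $\psi_{-}$-iterates of $[[2,1],[N-2,N-3]]$ form a cycle with initial element $[[2,1],[N-2,N-3]]$ and terminal element $[[n,n-1],[n+1,n]]$ if $n$ is even, or $[[n+1,n],[n,n-1]]$ if $n$ is odd. Applying $\Psi_{-}$ termwise, the resulting $S_{b}$-cycle has initial element $p_{2}(x)+p_{1}(x)k.p_{N-2}(x)+p_{N-3}(x)k|_{b}$ and terminal element $p_{n}(x)+p_{n-1}(x)k.p_{n+1}(x)+p_{n}(x)k|_{b}$ for $n$ even, or $p_{n+1}(x)+p_{n}(x)k.p_{n}(x)+p_{n-1}(x)k|_{b}$ for $n$ odd, which are precisely the claimed expressions.

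The only step that is not pure bookkeeping, and the place I expect to spend real care, is confirming that Theorem~\ref{thm:pellfibcycle-ii} applies at \emph{every} step of the orbit: one must check that as $P$ runs over the $\psi_{-}$-orbit of $[[2,1],[N-2,N-3]]$ all indices occurring stay in the admissible band $\{1,\dots,N-2\}$, so that each component $p_{r}(x)+p_{r-1}(x)k$ is a genuine digit, strictly less than $b=p_{N-1}(x)+p_{N-2}(x)k$, with no index collapsing to $0$ or reaching $N-1$ or $N$. The key point is that $r+s=N$ forces $t=\min(r,s)\le n$ whenever both indices are positive, which keeps $N-(2t-1)$ and $2t-1$ inside the band; one should also separate off the companion fixed point $[[2n,2n-1],[1,0]]$, which lies outside the fundamental cycle, and flag the degenerate cases $n=1,2$, where (cf.\ Theorem~\ref{thm:psiminus}) the ``cycle'' collapses to a single isolated fixed point. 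Beyond these checks no further computation is needed, since all the identity-level work is absorbed into Theorem~\ref{thm:pellfibcycle-ii} through the Pell--Catalan~\eqref{eqn:pell-catalan}, Pell--Vajda~\eqref{eqn:pell-vajda} and Pell--d'Ocagne~\eqref{eqn:pell-docagne} identities (the analogues of Lemmas~\ref{e:lemma3} and~\ref{e:lemma4}), and the orbit combinatorics into Theorem~\ref{thm:arith-psiminus}.
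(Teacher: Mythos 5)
Your proposal is correct and follows essentially the same route as the paper: the one-step identity of Theorem~\ref{thm:pellfibcycle-ii} combined with the $\psi_{-}$ orbit combinatorics (Theorem~\ref{thm:arith-psiminus}) via the dictionary $\Psi_{-}$, exactly mirroring Section~\ref{s:arithFibonacci-ii}, which is what the paper's terse ``similar to Theorem~\ref{thm:arithfibcycle-ii}'' proof intends. Your extra checks on digit validity and the degenerate cases $n=1,2$ go slightly beyond what the paper records but do not alter the argument.
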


\begin{proof}
 The proof is similar to that of Theorem \ref{thm:arithfibcycle-ii}.
\end{proof}

\medskip

\noindent MSC2020: 11B25, 11B39

\end{document}